\numberwithin{equation}{section}
\newtheorem{thm}{Theorem}[section]
\newtheorem{lem}[thm]{Lemma}
\newtheorem{prop}[thm]{Proposition}
\newtheorem{cor}[thm]{Corollary}
\newtheorem{problem}[thm]{Problem}
\theoremstyle{definition}
\newtheorem{defn}[thm]{Definition}
\theoremstyle{remark}
\newtheorem{rmk}[thm]{Remark}
\newtheorem{ex}[thm]{Example}
\newtheorem{exs}[thm]{Examples}
\newtheorem{notn}[thm]{Notation}
\DeclareMathAlphabet{\mathbbe}{U}{bbold}{m}{n}
\newcommand \Om{\Omega}
\newcommand \vp{\varphi}
\newcommand \ve{\varepsilon}
\newcommand \im{\operatorname{Im}}
\newcommand\coker{\operatorname{coker}}
\newcommand \sn[1]{(-1)\sp{#1}}
\newcommand{\si}{s^{-1}}
\newcommand\op{\mathrm{op}}
\newcommand{\ob}{\operatorname{Ob}}
\newcommand{\mor}{\operatorname{Mor}}
\newcommand\T{\mathbb T}
\newcommand\K{\mathbb K}
\newcommand{\Id}{\operatorname{Id}}
\newcommand{\C}{\mathsf C}
\newcommand{\D}{\mathsf D}
\newcommand\R{\mathsf R}
\newcommand\V{\mathsf V}
\newcommand\M{\mathsf M}
\newcommand\X{\mathcal X}
\newcommand\I{\mathcal I}
\newcommand\J{\mathcal J}
\newcommand{\W}{W}
\newcommand\Z{\mathcal Z}
\newcommand\cL{\mathcal L}
\renewcommand\R{\mathsf R}
\newcommand\cR{\mathcal R}
\newcommand{\WE}{\mathcal{W}}
\newcommand{\Fib}{\mathcal{F}}
\newcommand{\Cof}{\mathcal{C}}
\newcommand\post {\mathsf {Post}}
\newcommand\cell {\mathsf {Cell}}
\newcommand\lan{\operatorname{Lan}}
\newcommand\ran{\operatorname{Ran}}
\newcommand\map{\operatorname{Map}}
\renewcommand\hom{\operatorname{Hom}}
\newcommand\adjunct[4]{\xymatrix{#1\ar @<1.25ex>[rr]^{#3}&\perp&#2\ar @<1.25ex>[ll]^{#4}}}
\newcommand{\lra}{\to}
\newcommand{\colim}{\operatorname{colim}}
\newcommand{\Iso}{\mathrm{Iso}}
\newcommand{\Set}{\mathsf{Set}}
\newcommand{\Ch}{\mathsf{Ch}}
\newcommand{\rlp}[1]{{#1}^\boxslash}
\newcommand{\llp}[1]{{}^\boxslash\!{#1}}
\DeclareMathOperator{\sm}{\wedge}
\DeclareMathOperator{\II}{\mathbb I}
\DeclareMathOperator{\ti}{\times}
\DeclareMathOperator{\ot}{\otimes}
\DeclareMathOperator{\equal}{equal}
\renewcommand\Bar{\operatorname{Bar}}
\newcommand{\kk}{\mathbbe{k}}
\begin{document}
\title [Left-induced model structures]{Left-induced model structures and diagram categories}
\author[Bayeh]{Marzieh Bayeh}
\author [Hess]{Kathryn Hess}
\author[Karpova]{Varvara Karpova}
\author[K\c{e}dziorek]{Magdalena K\c{e}dziorek}
\author[Riehl]{Emily Riehl}
\author[Shipley]{Brooke Shipley}

\address{Dept. of Mathematics \& Statistics\\College West Building\\University of Regina\\3737 Wascana Parkway\\Regina, Saskatchewan\\S4S 0A2  Canada}
\email{bayeh20m@uregina.ca}

\address{MATHGEOM \\
    \'Ecole Polytechnique F\'ed\'erale de Lausanne \\
    CH-1015 Lausanne \\
    Switzerland}
    \email{kathryn.hess@epfl.ch}
    
\address{MATHGEOM \\
    \'Ecole Polytechnique F\'ed\'erale de Lausanne \\
    CH-1015 Lausanne \\
    Switzerland}
    \email{varvara.karpova@epfl.ch}
    
 \address{School of Mathematics and Statistics\\University of Sheffield\\ Sheffield S10 2TN\\United Kingdom}
\email{pmp10mk@sheffield.ac.uk}

\address{Dept.~of Mathematics\\Harvard University\\1 Oxford Street\\ Cambridge MA 02138\\USA}
\email{eriehl@math.harvard.edu}

\address{Department of Mathematics, Statistics, and Computer Science, University of Illinois at
Chicago, 508 SEO m/c 249,
851 S. Morgan Street,
Chicago, IL, 60607-7045, USA}
    \email{bshipley@math.uic.edu}

\date {\today }

 \keywords {Model category, weak factorization system, fibrant generation, Postnikov presentation, injective model structure.} 
 \subjclass [2010] {Primary: { 18G55, 55U35; Secondary: 18G35}}

 \begin{abstract} 
 {We prove existence results {\`a la Jeff Smith} for  
 left-induced model category structures, of which the injective model structure on a diagram category is an important example. 
 We further develop the notions of fibrant generation and Postnikov presentation from \cite{hess:hhg}, which are dual to a weak form of cofibrant generation and cellular presentation.  As examples, for $k$ a field and $H$ a differential graded Hopf algebra over $k$, we produce a left-induced model structure on augmented $H$-comodule algebras and show that the category of bounded {below} chain complexes of finite-dimensional $k$-vector spaces has a Postnikov presentation.}

To conclude, we investigate the fibrant generation of (generalized) Reedy categories.  In passing, we also consider cofibrant generation, cellular presentation, and the small object argument for Reedy diagrams.
 \end{abstract}

 \maketitle
\tableofcontents
%%%%%%%%%%%%%%
%INTRODUCTION              % 
%%%%%%%%%%%%%%

\section{Introduction}

Let $(\M, \Fib, \Cof, \WE)$ be a model category and $\C$ a bicomplete category.  Given a pair of adjoint functors
$$\adjunct{\M}{\C}{L}{R},$$ 
there are well known conditions, such as \cite[Theorems 11.3.1 and 11.3.2]{hirschhorn}, under which there is a model structure on $\C$, which we call the \emph{right-induced model category structure}, with $R^{-1}(\WE)$, $R^{-1}(\Fib)$ as weak equivalences and fibrations, respectively. 

In this paper we study the dual situation, where one has a pair of adjoint functors
$$\adjunct{\C}{\M}{L}{R}$$
and wants to know when there is a model structure on $\C$ with $L^{-1}(\WE)$, $L^{-1}(\Cof)$ as weak equivalences and cofibrations, respectively.  We call this a \emph{left-induced} model structure.  Note that if the left-induced model structure exists, then the adjunction above is a Quillen pair with respect to the left-induced structure on $\C$ and the given model structure on $\M$.

{Right-induced model structures are common when the original model structure is \emph{cofibrantly generated} and when $\C$ satisfies appropriate smallness conditions. In this case, the small object argument provides model-theoretic factorizations for $\C$, and the existence of the right-induced model structure reduces to a fundamental ``acyclicity'' condition, which is sometimes difficult to check in practice; see e.g., \cite[Theorem 16.2.5]{may-ponto}.}

{We begin by exploring the dual  existence results  for left-induced model structures (Corollaries \ref{cor:postnikov} and \ref{cor:leftind-fibgen}), which are} expressed in terms of  either \emph{fibrant generation} or a \emph{Postnikov presentation}  of the model category $\M$.  We say that $\M$ is fibrantly generated by a pair $(\X, \Z)$ of classes of morphisms, called the \emph{generating fibrations} and \emph{generating acyclic fibrations}, if the acyclic cofibrations (respectively, cofibrations) of $\M$ are exactly the morphisms that have the left lifting property with respect to $\X$ (respectively, $\Z$).  If, in addition, each fibration (respectively, acyclic fibration) in $\M$ is a retract of the limit of a tower of morphisms built by pullback of elements of $\X$ (respectively, $\Z$), then $(\X, \Z)$ is a Postnikov presentation.

Working with fibrant generation and Postnikov presentations is a more delicate affair than the more familiar dual case. If a cocomplete category $\M$ and a set of maps $\I$ satisfy certain set-theoretical ``smallness'' conditions, then by a procedure called the \emph{small object argument} it is possible to construct a functorial factorization whose right factor has the right lifting property with respect to $\I$ and whose left factor is a  \emph{relative $\I$-cell complex}: a colimit of sequences of pushouts of maps in $\I$; {see Definition \ref{defn:cell} } or  \cite [Proposition 11.2.1]{hirschhorn}. As a consequence of this construction, for any model category $\M$ that is cofibrantly generated by a pair $(\I,\J)$ of sets of maps satisfying the smallness conditions, the cofibrations (respectively, acyclic cofibrations) are retracts of relative $\I$-cell complexes (respectively, relative $\J$-cell complexes): the small object argument implies that the pair $(\I,\J)$ defines a \emph{cellular presentation} (see Proposition \ref{prop:cofibgen-cell}).

No such general result holds in the dual case of  fibrant generation or Postnikov presentations: the \emph{cosmall object argument}, defined by dualizing the colimit constructions of the small object argument, requires  ``cosmallness'' conditions, which are {rarely} satisfied in practice; {see Remark~\ref{cosmall} though}.  For this reason, the terminology we introduce in Section \ref{sec:fib-and-post}    separates the lifting properties, the cellular presentation, and the smallness conditions that are normally unified by the adjective ``cofibrantly generated'' (see Remarks \ref{rmk:cof-gen-warning} and \ref{rmk:cof-gen-comparison}). Our motivation for persevering despite the technical difficulties presented by the theory of Postnikov presentations and fibrant generation is that left-induced model structures include interesting examples that were previously unknown, e.g., \cite[Theorem 2.10]{hess:hhg} and \cite[Theorem 6.2]{hess-shipley}).

{If the categories $\C$ and $\M$ are locally presentable, there is a more general existence theorem for left-induced model structures, Theorem \ref{thm:mr}, that does not require fibrant generation or a Postnikov presentation. In this result, \`a la Jeff Smith} {(see for example, ~\cite[Theorem 1.7]{beke}),} {the set-theoretical assumption of local presentability provides the model-theoretic factorizations. As for Smith's result, the construction of these is fairly inexplicit; here we appeal to recent work of Michael Makkai and Ji\v{r}\'i Rosick\'y \cite{makkai-rosicky}, which describes the details of the construction of these factorizations. What remains to check, again as is the case for Smith, is an ``acyclicity condition'': maps characterized by a certain lifting property must be mapped by $L$ to weak equivalences.  As in the dual setting, this can be daunting to verify in practice. However, numerous applications of Theorem \ref{thm:mr} have already been found:  Theorem \ref{thm:comodalg} in this paper, \cite{hess-shipley2}, and \cite{ching-riehl}.}

We are particularly interested in studying the injective model structure on diagram categories $\M^{\D}$, where $\M$ is a model category and $\D$ is a small category. The injective model structure is left-induced from the ``pointwise'' model structure on the category of $\ob\D$-indexed diagrams in $\M$. Applying Corollary \ref{cor:postnikov}, we {describe a criterion for the existence of the} injective model structure on  diagram categories  $\M^{\D}$, when the cofibrations of $\M$ are exactly the monomorphisms, and $\M $ admits a {(possibly trivial)} Postnikov presentation (Theorem \ref{thm:ab}). In contrast with the standard existence result, our theorem does not require the model structure on $\M$ to be combinatorial. {Because we have dropped this hypothesis, the challenge is to construct the remaining model-theoretic factorization.}

As a complement to our analysis of injective model structures, we show that if $\R$ is  a Reedy category or, more generally, a dualizable generalized Reedy category,  and $\M$ is model category {that is fibrantly generated by $(\X, \Z)$, then $\M^{\R}$, endowed with its (generalized) Reedy model structure, is also fibrantly generated by classes built naturally from $\X$ and $\Z$} (Theorems \ref{fibgen} and \ref{thm:generalizedReedy}). Moreover, if $\M$ has a Postnikov presentation, then so does $\M^\R$ (Theorem \ref{thm:reedy-post} and Remark \ref{rmk:gen-reedy-post}).

We also establish certain elementary properties of left-induced  model structures in general (Lemmas \ref{lem:left-fibgen}, \ref{left properness of C}, and \ref{lem:C V model}) and of injective model category structures in particular (Lemmas \ref{lem:fibgen-injective}, \ref{properness of MD}, and \ref{MD V model}). {However, certain} obvious and relevant questions about Postnikov presentations and fibrant generation remain open. 
For example, if $\M$ is a model category with Postnikov presentation $(\X, \Z)$, and $\C$ admits the left-induced model structure with respect to the adjunction above, then $\C$ is fibrantly generated by $\big(R\X, R\Z\big)$ by Lemma \ref{lem:left-fibgen}.  On the other hand, we do not know of reasonable conditions that imply that $\big(R\X, R\Z\big)$ is actually a Postnikov presentation of $\C$, as the set-theoretical difficulties mentioned above appear to present obstacles to obtaining a result in this direction. We hope to answer this question in the near future. 

{Other extensions of this work are possible. For example, the injective model structure on a ``generalized diagram category'' constructed from a diagram of model categories (as studied by \cite{Huettemann-Roendigs}, \cite{barwick}, and \cite{greenlees-shipley}) will be fibrantly generated if each of the model categories in the diagram is.
}

\subsection{Acknowledgements}  The authors express their deep gratitude to the Banff International Research Station for hosting the ``Women in Topology'' workshop at which much of the research presented in this article was carried out.  They also would like to thank the Clay Foundation very warmly for financing their travel to and from Banff.  The third author was supported also by the Swiss National Science Foundation, grant number 200020-144399, the fifth author by a National Science Foundation Postdoctoral Research Fellowship award number DMS-1103790, and the sixth author by the National Science Foundation, DMS-1104396. {Finally, the authors appreciate greatly the referee's helpful and constructive suggestions.}

%%%%%

\section{Left-induced model structures}\label{sec:left-induced}

In this section, we prove a number of {results providing conditions under which left-induced model structures exist}.  Our results are formulated in terms of either fibrant generation or Postnikov presentations of model categories, notions that we recall below.  We conclude by establishing elementary results concerning properness and enrichment of left-induced model structures. 

\begin{notn} Throughout this section, if $\X$ is a class of morphisms in a category, then $\widehat \X$ denotes its closure under retracts.
\end{notn}

\subsection{Fibrant generation and Postnikov presentations of model categories}\label{sec:fib-and-post}

We begin this section by recalling the elegant formulation of the definition of model categories in terms of weak factorization systems  due to Joyal and Tierney.  We then express the definitions of fibrant generation and of Postnikov presentations in this framework, dualizing the more familiar notions.

\begin{notn} Let $f$ and $g$ be morphisms in a category $\C$. If for every commutative diagram in $\C$
$$\xymatrix{ \cdot \ar[d]_f \ar[r]^{{a}} & \cdot \ar[d]^{g} \\ \cdot \ar[r]_{{b}} \ar@{-->}[ur]_{c} & \cdot}$$
the dotted lift $c$ exists, i.e., $gc=b$ and $cf=a$, then we write $f\boxslash g$.

If $\X$ is a class of morphisms in a category $\C$, then
$$\llp{\X}=\{ f \in \mor C\mid f\boxslash x\quad \forall x\in \X\},$$
and
$$\rlp{\X}=\{ f \in \mor C\mid x\boxslash f\quad \forall x\in \X\}.$$
\end{notn}

\begin{defn} A \emph{weak factorization system} on a category $\C$ consists of a pair $(\cL,\cR)$ of classes of morphisms in $\C$ such that
\begin{itemize}
\item any morphism in $\C$ can be factored as a morphism in $\cL$ followed by a morphism in $\cR$, and
\item $\cL = \llp{\cR}$ and $\cR= \rlp{\cL}$.
\end{itemize}
A weak factorization system $(\cL, \cR)$ is \emph{cofibrantly generated} {by} a class of morphisms $\I$ {if} $\cR= \rlp \I$ and thus $\cL =\llp {(\rlp \I)}$.  It is \emph{fibrantly generated} {by} a class of morphisms $\X$ {if} $\cL=\llp \X$ and thus $\cR=\rlp {(\llp \X)}$. 
\end{defn}

The definition of a model category can be compactly formulated in terms of weak factorization systems \cite[7.8]{joyal-tierney}, \cite[14.2.1]{may-ponto}.

\begin{defn}\label{defn:modelcat}  A \emph{model category} consists of a bicomplete category $\M$, together with three classes of morphism $\Fib$, $\Cof$, and $\WE$, called \emph{fibrations}, \emph{cofibrations}, and \emph{weak equivalences},  such that 
\begin{itemize}
\item $\WE$ {contains the identities} and satisfies ``2-out-of-3,'' i.e., given two composable morphisms $f$ and $g$ in $\WE$, if two of $f$, $g$, and $gf$ are in $\WE$, so is the third; and
\item $(\Cof, \Fib \cap \WE)$ and $(\Cof\cap \WE, \Fib)$ are both weak factorization systems.
\end{itemize}

A model category $(\M, \Fib, \Cof, \WE)$ is \emph{cofibrantly generated}  by a pair of classes of morphisms $(\I, \J)$  if the weak factorization systems $(\Cof, \Fib \cap \WE)$ and $(\Cof\cap \WE, \Fib)$ are cofibrantly generated by $\I$ and $\J$, respectively.  The elements of $\I $ and $ \J$ are then \emph{generating cofibrations}  and \emph{generating acyclic cofibrations}. 

A model category $(\M, \Fib, \Cof, \WE)$ is \emph{fibrantly generated} by a pair of classes of morphisms $(\X, \Z)$  if the weak factorization systems $(\Cof\cap \WE, \Fib)$ and $(\Cof, \Fib \cap \WE)$ are fibrantly generated by $\X$ and $\Z$, respectively.  The  elements of $\X$ and $ \Z$ are  \emph{generating fibrations}  and \emph{generating acyclic fibrations}.
\end{defn}

\begin{rmk}\label{rmk:cof-gen-warning}
Our notion of cofibrant generation is \textbf{weaker} than the usual definition (see, for example, \cite[Definition 11.1.2]{hirschhorn}) in two ways. First, we allow the generators to be a class of maps; observe that any model category is (trivially) cofibrantly generated by its cofibrations and acyclic cofibrations. The reason for this convention is that past work has shown that even trivial Postnikov presentations can have non-trivial applications; see  \cite[Theorem 6.2] {hess-shipley}.

Second, we wish to introduce separate terminology for the usual smallness conditions involving the category $\M$ and the classes $\I$ and $\J$ that permit the use of the small object argument and its consequent characterizations of the cofibrations and acyclic cofibrations. This refined terminology is necessary because  the dual conditions, appropriate to the case of fibrant generation, are not satisfied in practice. 
\end{rmk}

Frequently (see Proposition \ref{prop:cofibgen-cell}), the cofibrations and acyclic cofibrations in a cofibrantly generated model category can be characterized in terms of the following construction.

\begin{defn}\label{defn:cell} Let $\I$ be a class of morphisms in a cocomplete category $\C$. Let $Y\colon\lambda \to \C$ be a functor, where $\lambda$ is an ordinal.  If for all $\beta <\lambda$, there is a pushout
$$\xymatrix{X_{\beta +1}\ar [d]_{i_{\beta+1}\in \I}\ar [r]^{k_{\beta}\in \C} \ar@{}[dr]|(.8){\ulcorner} &Y_{\beta}\ar [d]^{}\\ X'_{\beta+1}\ar [r]^{}&Y_{\beta+1}}$$
and 
$Y_{\gamma}:=\colim _{\beta<\gamma}Y_{\beta}$ for all limit ordinals $\gamma<\lambda$, then the composition of the sequence 
$$Y_{0}\to \colim_{\lambda}Y_{\beta},$$
is a \emph {relative $\I$-cell complex}.    The class of all relative $\I$-cell complexes is denoted $\cell_{\I}$.

A \emph {cellular presentation} of a weak factorization system $(\cL, \cR)$ in a cocomplete category $\C$ consists of a class $\I$ of morphisms such that  $\cL=\widehat{\cell_{\I}}$. This condition implies that $\cR=\rlp{\I}$, so {a weak factorization systems that is cellularly presented by $\J$ is also cofibrantly generated by $\J$}.

A \emph {cellular presentation} of a model category $(\M, \Fib, \Cof, \WE)$ consists of  a pair of  classes of morphisms $(\I,\J)$ that are cellular presentations of the weak factorization systems $(\Cof, \Fib \cap \WE)$ and $(\Cof\cap \WE, \Fib)$, respectively.
\end{defn}

Any cellular presentation $(\I,\J)$ defines generating cofibrations and acyclic cofibrations for the model category $\M$. Under certain hypotheses, the small object argument (see \cite[Proposition 10.5.16]{hirschhorn} or \cite[Proposition 15.1.11]{may-ponto}) implies the converse. We say that $\M$, $\I$, and $\J$ \emph{permit the small object argument} if $\I$ and $\J$ are sets and a ``smallness'' condition is satisfied. If $\M$ is locally presentable, then the smallness condition is automatically satisfied by any generating sets $\I$ and $\J$. If $\M$ is not locally presentable, there are more general notions of smallness that may be satisfied by particular sets $\I$ and $\J$. We refer to \cite[Definition 10.4.1]{hirschhorn} or \cite[Definition 15.1.7]{may-ponto} for the definition of smallness.

\begin{prop}[{\cite[Proposition 11.2.1]{hirschhorn}}]\label{prop:cofibgen-cell} If  $(\M, \Fib, \Cof, \WE)$ is a model category that is cofibrantly generated by a pair of sets of morphisms $(\I, \J)$ and permits the small object argument, then $(\I, \J)$ is a cellular presentation of $(\M, \Fib, \Cof, \WE)$.
\end{prop}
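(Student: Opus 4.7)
The plan is to verify the two equalities $\Cof = \widehat{\cell_{\I}}$ and $\Cof\cap \WE = \widehat{\cell_{\J}}$ separately; by symmetry it suffices to explain the first. The small object argument, applicable by hypothesis, produces for every morphism $f$ in $\M$ a factorization $f = p\circ i$ with $i \in \cell_{\I}$ and $p \in \rlp{\I} = \Fib \cap \WE$, where the last equality uses the assumption that $\I$ cofibrantly generates $(\Cof, \Fib\cap\WE)$.

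First I would show the inclusion $\widehat{\cell_{\I}} \subseteq \Cof$. Since $\I \subseteq \llp{(\rlp{\I})} = \Cof$, and $\Cof = \llp{(\Fib\cap\WE)}$ is the left class of a weak factorization system, it is closed under pushouts, transfinite composition, and retracts. Thus every relative $\I$-cell complex, and every retract thereof, lies in $\Cof$.

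For the reverse inclusion, let $f\colon A\to B$ be any cofibration, and apply the small object factorization to obtain
$$\xymatrix{A \ar[r]^i \ar[d]_f & X \ar[d]^{p} \\ B \ar@{=}[r] \ar@{-->}[ur]^{r} & B}$$
with $i\in \cell_{\I}$ and $p \in \Fib \cap \WE$. Because $f \in \Cof$ and $p \in \Fib\cap\WE$, the lifting axiom produces $r\colon B\to X$ making both triangles commute. This exhibits $f$ as a retract of $i$ via the diagram
$$\xymatrix{A \ar@{=}[r] \ar[d]_f & A \ar[d]^{i} \ar@{=}[r] & A \ar[d]^{f} \\ B \ar[r]^{r} & X \ar[r]^{p} & B,}$$
whence $f \in \widehat{\cell_{\I}}$. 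The argument for $\Cof\cap\WE = \widehat{\cell_{\J}}$ is formally identical, using instead the small object factorization with respect to $\J$ and the identification $\rlp{\J} = \Fib$.

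Everything here is routine once the small object argument is in hand, so there is no single hard step; the only point requiring a little care is the retract argument in the second paragraph, which is a standard consequence of the lifting axioms of a model category and explains why one must take the closure under retracts in the definition of a cellular presentation in the first place.
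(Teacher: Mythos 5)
Your proof is correct and is essentially the standard argument that the paper delegates to \cite[Proposition 11.2.1]{hirschhorn}: the small object argument supplies the factorization, closure properties of the left class of a weak factorization system give one inclusion, and the retract argument gives the other. Nothing differs in substance from the cited proof.
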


\begin{rmk}\label{rmk:cof-gen-comparison}
A model category is cofibrantly generated in the sense of  \cite[Definition 11.1.2]{hirschhorn} if and only if it satisfies the hypotheses of Proposition \ref{prop:cofibgen-cell}.
\end{rmk}

We recall from \cite{hess:hhg} the following definition, which dualizes Definition \ref{defn:cell}.

\begin{defn}  Let $\X$ be a class of morphisms in a complete category $\C$. Let $Y\colon\lambda ^\op\to \C$ be a functor, where $\lambda$ is an ordinal.  If for all $\beta <\lambda$, there is a pullback
$$\xymatrix{Y^{\beta +1}\ar [d]_{}\ar [r]^{}\ar@{}[dr]|(.2){\lrcorner}&X'^{\beta+1}\ar [d]^{x^{\beta+1}\in \X}\\ Y^{\beta}\ar [r]_{k^{\beta}\in \C}&X^{\beta+1}}$$
and 
$Y^{\gamma}:=\lim _{\beta<\gamma}Y^{\beta}$ for all limit ordinals $\gamma<\lambda$, then the composition of the tower
$$\lim_{\lambda^\op}Y^{\beta}\to Y^{0},$$
is an \emph {$\X$-Postnikov tower}.    The class of all $\X$-Postnikov towers is denoted {$\mathsf {Post}_{\X}$}. 
 
A \emph {Postnikov presentation} of a weak factorization system $(\cL, \cR)$ in a complete category $\C$ consists of a class $\X$ of morphisms such that  $\cR=\widehat{\mathsf {Post}_{\X}}$. This implies that $\cL = \llp{\X}$; a weak factorization system with a Postnikov presentation {$\X$ is also fibrantly generated by $\X$.}

A \emph {Postnikov presentation} of a model category $(\M, \Fib, \Cof, \WE)$ consists of  a pair of  classes of morphisms $(\X,\Z)$ that are Postnikov presentations of the weak factorization systems $(\Cof\cap \WE, \Fib)$ and $(\Cof, \Fib \cap \WE)$, respectively. 
\end{defn}

{\begin{rmk}\label{rmk:postx-closed} For any class $\X$, the duals of the usual arguments show that $\post_{\X}$ is the smallest class of morphisms that is closed under composition, pullback, and inverse limits indexed by ordinals.
\end{rmk}}

\begin{rmk} Every model category $(\M, \Fib, \Cof, \WE)$ admits a trivial Postnikov presentation $(\Fib, \Fib\cap \WE)$.  On the other hand, knowledge of a particularly small and tractable Postnikov presentation, such as that discussed in Section \ref{sec:examples}, may simplify certain computations and verifications of properties of the model category, in a manner analogous to the well known, dual case of particularly nice cellular presentations.
\end{rmk}

\begin{rmk}  If $(\X, \Z)$ is a Postnikov presentation for $(\M, \Fib, \Cof, \WE)$, then $\Cof = \llp \Z$ and $\Cof \cap \WE =\llp \X$, whence $(\M, \Fib, \Cof, \WE)$ is fibrantly generated by $(\X, \Z)$.  However, if a pair of sets  $(\X, \Z)$ fibrantly generates $(\M, \Fib, \Cof, \WE)$, there are no practical conditions that guarantee that the triple $\M$, $\X$, and $\Z$ ``permit the cosmall object argument.'' In the absence of an explicit ``Postnikov-like'' construction of factorizations, we cannot conclude that $\Fib=\widehat{\mathsf {Post}_{\X}}$ and $ \Fib\cap \WE=\widehat{\mathsf {Post}_{\Z}}$. 
\end{rmk}

{
\begin{rmk}\label{cosmall}
By Gelfand duality, the category of compact Hausdorff spaces is equivalent to the opposite of the category of commutative unital $C^*$-algebras. The latter is locally presentable, which implies that the the category of compact Hausdorff spaces is ``colocally presentable.'' In particular, every object is cosmall. This means that for any set of maps $\X$, the cosmall object argument can be used to construct a functorial factorization whose left factors lift against $\X$ and whose right factors are in $\post_\X$. It follows that any set of maps gives rise to a fibrantly generated weak factorization system with a Postnikov presentation.
\end{rmk}
}

\begin{rmk}One way in which Postnikov presentations of model categories, even trivial ones, can be useful is in enabling us to construct ``explicit'' fibrant replacements,  as (retracts of) Postnikov towers, with particularly simple layers, as in  \cite[Section 7]{hess-shipley}. For example, given a Postnikov presentation for a model structure on a diagram category, one could construct explicit models for homotopy limits. 
\end{rmk}

\subsection{Existence of left-induced model structures}

We now apply fibrant generation and Postnikov presentations to prove the existence of left-induced model structures, which are defined as follows.

\begin{defn} Let
$\adjunct{\C}{\M}{L}{R}$
be an adjoint pair of functors, where $(\M, \Fib, \Cof, \WE)$ is a model category, and $\C$ is a bicomplete category.  If the triple of classes of morphisms in $\C$ 
$$\Big(\rlp {\big(L^{-1}(\Cof \cap\WE)\big)} , L^{-1}(\Cof), L^{-1}(\WE)\Big)$$
satisfies the axioms of a model category, then it is a \emph{left-induced model structure} on $\C$.
\end{defn}

\begin{rmk} If $\C$ admits a model structure left-induced from that of $\M$ via an adjunction as in the definition above, then $L \dashv R$ is a Quillen pair with respect to the left-induced model structure on $\C$ and the given model structure on $\M$.
\end{rmk}

\begin{rmk} A typical instance in which one is interested in the existence of a left-induced model structure is that of the forgetful/cofree adjunction associated to a comonad $\K$ on model category $\M$,
$$\adjunct{\M_{\K}}{\M} {U_{\K}}{F_{\K}},$$
where $\M_{\K}$ is category of $\K$-coalgebras, $F_{\K}$ is the cofree $\K$-coalgebra functor, and $U_{\K}$ is the forgetful functor.
In \cite{hess-shipley}, the authors provide conditions under which the left-induced model structure exists for such an adjunction.
\end{rmk}

\begin{lem}\label{lem:left-fibgen} Let $(\M, \Fib, \Cof, \WE)$ be a model category fibrantly generated by $(\X, \Z)$.  Any model structure on a bicomplete category $\C$ that is left-induced by an adjoint pair
$$\adjunct{\C}{\M}{L}{R}$$
is fibrantly generated by $\big(R(\X), R(\Z)\big)$.  
\end{lem}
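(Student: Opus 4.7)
The plan is to argue by a routine adjunction-lifting calculation. The whole proof rests on the standard bijection induced by $L \dashv R$ between lifting problems: for any $f \in \mor\C$ and $g \in \mor\M$, one has $f \boxslash Rg$ if and only if $Lf \boxslash g$, since a square over $Rg$ in $\C$ transposes to a square over $g$ in $\M$, and lifts correspond under transposition. Consequently, for any class $\Y$ of morphisms in $\M$,
$$\llp{(R\Y)} \;=\; \{f \in \mor\C \mid Lf \in \llp{\Y}\} \;=\; L^{-1}\bigl(\llp{\Y}\bigr).$$

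First I would apply this identity to $\Y = \X$ and to $\Y = \Z$. Since $(\M, \Fib, \Cof, \WE)$ is fibrantly generated by $(\X, \Z)$, Definition of fibrant generation gives $\llp{\X} = \Cof \cap \WE$ and $\llp{\Z} = \Cof$. Hence
$$\llp{(R\X)} \;=\; L^{-1}(\Cof \cap \WE) \qquad \text{and} \qquad \llp{(R\Z)} \;=\; L^{-1}(\Cof).$$

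Next I would identify these two classes with the acyclic cofibrations and the cofibrations of the left-induced model structure on $\C$. By definition, the cofibrations of $\C$ are $L^{-1}(\Cof)$ and the weak equivalences of $\C$ are $L^{-1}(\WE)$, so the acyclic cofibrations are $L^{-1}(\Cof) \cap L^{-1}(\WE) = L^{-1}(\Cof \cap \WE)$. Combining with the previous display, the left-hand class of the weak factorization system $(\Cof_\C \cap \WE_\C, \Fib_\C)$ equals $\llp{(R\X)}$, and the left-hand class of $(\Cof_\C, \Fib_\C \cap \WE_\C)$ equals $\llp{(R\Z)}$. By the definition of fibrant generation of a model category, this is exactly the assertion that the left-induced model structure on $\C$ is fibrantly generated by $\bigl(R(\X), R(\Z)\bigr)$.

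There is essentially no hard step in this argument: it is a direct application of the transpose-of-a-lifting-problem bijection. The only thing to verify carefully is that the weak factorization systems of the left-induced structure really are the ones whose left classes are $L^{-1}(\Cof)$ and $L^{-1}(\Cof \cap \WE)$, which is immediate from the definitions.
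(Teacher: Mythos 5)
Your proof is correct and is essentially identical to the paper's own argument, which likewise runs the adjunction bijection $f\boxslash Rg \Leftrightarrow Lf \boxslash g$ to identify $\llp{R(\Z)}$ with $L^{-1}(\llp{\Z}) = L^{-1}(\Cof)$ and similarly for $\X$. You simply spell out the transposition of lifting squares and the identification of the acyclic cofibrations in slightly more detail than the paper's one-line ``simple game of adjunction.''
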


\begin{proof} The proof proceeds by a simple game of adjunction:
$$f\in L^{-1}(\Cof)\Longleftrightarrow L(f) \in \Cof={\llp\Z}\Longleftrightarrow f\in \llp{R({\Z})},$$
and similarly for {$\X$.}
\end{proof}

General existence results for left-induced model structures can be formulated in terms of the relation between the adjunction and either fibrant generation or Postnikov presentations, as a consequence of the following theorem. The statement below is a slight variant of  that in \cite{hess:hhg}, in that retracts are allowed in the desired factorizations; the proof in \cite{hess:hhg} works in this more general setting as well.

\begin{thm}[{\cite [Theorem 5.11]{hess:hhg}}]\label{thm:left-ind}  Let $\C$ be a bicomplete category,  and let $\WE'$, $\Cof'$, $\X'$, and $\Z'$ be classes of morphisms in $\C$ satisfying the following conditions.
\begin{enumerate}
\item[(a)] The class $\WE'$ satisfies ``2-out-of-3'' and contains all identities. 
\smallskip
\item[(b)] The classes $\WE'$ and $\Cof'$ are closed under retracts.
\smallskip
\item [(c)]  $\Z'\subseteq \rlp{\Cof'}$ and $\X'\subseteq \rlp{(\Cof'\cap \WE')}$.
\smallskip
\item[(d)] $\Z'\subseteq \WE'$.
\item [(e)]  For all $f\in \mor \C$, there exist 
\begin{enumerate}
\item [(i)] $i\in \Cof '$ and $p\in \widehat{\Z'}$ such
that $f=pi$; 
\item [(ii)]$j\in \Cof' \cap \WE'$ and $q\in \widehat{\X'}$ such that
$f=qj$. 
\end{enumerate} 
\end{enumerate}
Then $(\C, \widehat {\X'}, \Cof', \WE')$ is a model category that is fibrantly generated by $(\X',\Z')$.
\end{thm}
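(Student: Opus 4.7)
My plan is to verify the two weak factorization system axioms required by Definition \ref{defn:modelcat}, treating the 2-out-of-3 and identity axioms on weak equivalences as given by (a). Throughout I rely heavily on the standard retract argument: whenever $f$ lies on the left (respectively, right) of a lifting problem and $f = pi$ is a factorization with $i \boxslash p$, the diagonal filler exhibits $f$ as a retract of $i$ (respectively, $p$). I also use that $\rlp{(-)}$ and $\llp{(-)}$ are automatically closed under taking retracts of their input.

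I would establish the wfs $(\Cof' \cap \WE', \widehat{\X'})$ first, since hypothesis (e)(ii) supplies the requisite factorizations directly. Hypothesis (c) gives $\X' \subseteq \rlp{(\Cof' \cap \WE')}$, hence $\widehat{\X'} \subseteq \rlp{(\Cof' \cap \WE')}$, which is half of the lifting axiom. For the reverse inclusions: given $f \in \llp{\widehat{\X'}}$, factor $f = qj$ via (e)(ii); the lift of $f$ against $q$ exhibits $f$ as a retract of $j \in \Cof' \cap \WE'$, so $f \in \Cof' \cap \WE'$ by (b). Dually, given $f \in \rlp{(\Cof' \cap \WE')}$, the same kind of factorization and retract argument (now against $j$) realizes $f$ as a retract of $q \in \widehat{\X'}$, hence $f \in \widehat{\X'}$.

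Next I would turn to the wfs $(\Cof', \widehat{\X'} \cap \WE')$. Hypothesis (e)(i) factors any $f$ as $f = pi$ with $i \in \Cof'$ and $p \in \widehat{\Z'}$, so the crux is to show $\widehat{\Z'} \subseteq \widehat{\X'} \cap \WE'$. The inclusion $\widehat{\Z'} \subseteq \WE'$ is immediate from (d) and (b), while (c) plus the first wfs yields
\[
\widehat{\Z'} \subseteq \rlp{\Cof'} \subseteq \rlp{(\Cof' \cap \WE')} = \widehat{\X'},
\]
so the factorization axiom holds. From $\widehat{\Z'} \subseteq \rlp{\Cof'}$ we also deduce $\Cof' \subseteq \llp{\widehat{\Z'}} \subseteq \llp{(\widehat{\X'} \cap \WE')}$, and the retract argument applied to factorizations of the form (e)(i) gives the matching reverse inclusions for the left class. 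The only genuinely subtle step is to show $\widehat{\X'} \cap \WE' \subseteq \rlp{\Cof'}$: given such an $f$, factor $f = pi$ with $i \in \Cof'$ and $p \in \widehat{\Z'} \subseteq \WE'$; by 2-out-of-3 applied to (a), $i \in \Cof' \cap \WE' = \llp{\widehat{\X'}}$, so $i$ lifts against $f \in \widehat{\X'}$, exhibiting $f$ as a retract of $p \in \rlp{\Cof'}$. This last use of (e)(i) together with 2-out-of-3 is the main obstacle, because (e)(i) only directly provides right factors in $\widehat{\Z'}$ rather than in $\widehat{\X'} \cap \WE'$, so the first wfs must be established before the second, and 2-out-of-3 must be invoked to bootstrap up.

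Finally, fibrant generation by $(\X', \Z')$ reduces to the identities $\Cof' \cap \WE' = \llp{\widehat{\X'}}$ and $\Cof' = \llp{(\widehat{\X'} \cap \WE')} = \llp{\widehat{\Z'}}$ established above, combined with the elementary observation that $\llp{\mathcal{Y}} = \llp{\widehat{\mathcal{Y}}}$ for any class $\mathcal{Y}$, since retracts of a map $y$ inherit any lifting property enjoyed by $y$.
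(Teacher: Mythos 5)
Your argument is correct and follows essentially the same route as the paper's (sketched) proof: both establish $(\Cof'\cap\WE',\widehat{\X'})$ first via the retract argument and then use (e)(i), (c), (d), and a 2-out-of-3 bootstrap to identify $\widehat{\Z'}$ with $\widehat{\X'}\cap\WE'$ and obtain the second weak factorization system. The one blemish is the asserted inclusion $\llp{\widehat{\Z'}}\subseteq\llp{(\widehat{\X'}\cap\WE')}$, which points the wrong way (applying $\llp{(-)}$ to the containment $\widehat{\Z'}\subseteq\widehat{\X'}\cap\WE'$ reverses it); this is harmless, because the ``subtle step'' you prove immediately afterwards, namely $\widehat{\X'}\cap\WE'\subseteq\rlp{\Cof'}$, is exactly the equivalent form of the needed inclusion $\Cof'\subseteq\llp{(\widehat{\X'}\cap\WE')}$, and together with the retract argument it also yields $\rlp{\Cof'}=\widehat{\Z'}=\widehat{\X'}\cap\WE'$, completing the verification.
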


\begin{proof}[Sketch of proof]
Conditions (b), (c), and (e) imply that $(\Cof' \cap \WE', \widehat{\X'})$ and $(\Cof', \widehat{\Z'})$ are weak factorization systems. Conditions (a) and (d) imply that $\widehat{\Z'} = \widehat{\X'}\cap \WE'$. Hence, the conditions of Definition \ref{defn:modelcat} are satisfied.
\end{proof}

In \cite{hess:hhg}, Theorem \ref{thm:left-ind} was applied to prove the existence of left-induced model structure in the presence of a Postnikov presentation.

\begin{cor}[{\cite[Corollary 5.15]{hess:hhg}}] \label{cor:postnikov}Let $(\M, \Fib, \Cof, \WE)$ be a model category with Postnikov presentation $(\X, \Z)$.
Let $\adjunct{\C}{\M}{L}{R}$ be a pair of adjoint functors, where $\C$ is a bicomplete category. If 
\begin{enumerate}
\item [(a)]$L\big(\mathsf{Post}_{R(\Z)}\big)\subseteq \WE$,
\end{enumerate}
 and for all $f\in \mor \C$
there exist 
\begin{enumerate}
\item [(b)] $i\in L^{-1}(\Cof) $ and $p\in \widehat {\mathsf{Post}_{R( \Z)}}$ such
that $f=pi$, and
\smallskip
\item [(c)]$j\in L^{{-1}}(\Cof\cap \WE)$ and $q\in \widehat{\mathsf{Post}_{R(\X)}}$ such that
$f=qj$,
\end{enumerate}
then   the model structure on $\C$ left-induced by the adjunction $L\dashv R$ exists and admits a Postnikov presentation $\big(R(\X), R(\Z)\big)$. 
\end{cor}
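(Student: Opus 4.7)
The plan is to apply Theorem \ref{thm:left-ind} with the candidate classes
$$\WE' := L^{-1}(\WE), \quad \Cof' := L^{-1}(\Cof), \quad \X' := \post_{R(\X)}, \quad \Z' := \post_{R(\Z)}.$$
Hypotheses (a) and (b) of that theorem are immediate, since $L$ is a left adjoint and therefore preserves identities, composition, and retracts; hence the $L$-preimage of a 2-of-3-closed or retract-closed class inherits the same properties. Hypothesis (e) is literally hypotheses (b) and (c) of the corollary, and hypothesis (d)---namely $\post_{R(\Z)} \subseteq L^{-1}(\WE)$---is precisely hypothesis (a).

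The real work is in verifying hypothesis (c) of Theorem \ref{thm:left-ind}, that $\post_{R(\Z)} \subseteq \rlp{L^{-1}(\Cof)}$ and $\post_{R(\X)} \subseteq \rlp{L^{-1}(\Cof\cap\WE)}$. Because $(\X, \Z)$ is a Postnikov presentation of $\M$, the inclusions $\X \subseteq \rlp{(\Cof \cap \WE)}$ and $\Z \subseteq \rlp{\Cof}$ hold; transposing across $L \dashv R$ (and using $L^{-1}(\Cof) \cap L^{-1}(\WE) = L^{-1}(\Cof \cap \WE)$) yields $R(\X) \subseteq \rlp{L^{-1}(\Cof \cap \WE)}$ and $R(\Z) \subseteq \rlp{L^{-1}(\Cof)}$. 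Any class of the form $\rlp{\mathcal S}$ is closed under composition, pullback, and inverse limits indexed by ordinals, so by Remark \ref{rmk:postx-closed}, which identifies $\post_{\X}$ with the smallest such closure of $\X$, the inclusions extend to all of $\post_{R(\X)}$ and $\post_{R(\Z)}$.

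Theorem \ref{thm:left-ind} now delivers a model category $(\C, \widehat{\post_{R(\X)}}, L^{-1}(\Cof), L^{-1}(\WE))$ together with weak factorization systems $(L^{-1}(\Cof\cap\WE), \widehat{\post_{R(\X)}})$ and $(L^{-1}(\Cof), \widehat{\post_{R(\Z)}})$. The first of these gives the identification $\widehat{\post_{R(\X)}} = \rlp{L^{-1}(\Cof \cap \WE)}$, exhibiting the structure as the left-induced one. The identity $\widehat{\post_{R(\X)}} \cap L^{-1}(\WE) = \widehat{\post_{R(\Z)}}$, which emerges from the proof of Theorem \ref{thm:left-ind}, then shows that $(R(\X), R(\Z))$ is a Postnikov presentation of the resulting model structure.

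The main obstacle is the closure argument of the second paragraph: combining adjunction transfer with the characterization of $\post$ as a closure under limit-type operations. Every other condition reduces to a hypothesis of the corollary or to a standard property of the left adjoint $L$, and once condition (c) is in hand, Theorem \ref{thm:left-ind} packages the remaining verifications into a single appeal.
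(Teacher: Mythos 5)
Your proof is correct and follows exactly the route the paper intends: the paper defers the proof to \cite[Corollary 5.15]{hess:hhg} but states that it is an application of Theorem \ref{thm:left-ind}, and your choice of classes $\WE'=L^{-1}(\WE)$, $\Cof'=L^{-1}(\Cof)$, $\X'=\post_{R(\X)}$, $\Z'=\post_{R(\Z)}$ is precisely the Postnikov analogue of the one-line proof the paper gives for Corollary \ref{cor:leftind-fibgen}. The verification of condition (c) via adjunction transfer plus the closure properties of $\rlp{(-)}$ and Remark \ref{rmk:postx-closed} is exactly the standard argument, so there is nothing to add.
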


This corollary has been applied in \cite{hess:hhg} and \cite{hess-shipley} to prove the existence of explicit left-induced model structures. A very similar proof to that in \cite{hess:hhg} enables us to establish an existence result in the presence of {the weaker hypothesis of} fibrant generation as well.

\begin{cor}\label{cor:leftind-fibgen} Let $(\M, \Fib, \Cof, \WE)$ be a model category fibrantly generated by $(\X, \Z)$.
Let $\adjunct{\C}{\M}{L}{R}$ be a pair of adjoint functors, where $\C$ is a bicomplete category. If 
\begin{enumerate}
\item [(a)] {$(L^{-1} \Cof)\rlp{} \subseteq L^{-1} \WE$,}
%$L\Big(\rlp {\big(\llp{R(\Z)}\big)} \Big)\subseteq \WE$,
\end{enumerate}
 and for all $f\in \mor \C$
there exist 
\begin{enumerate}
\item [(b)] $i\in L^{-1}(\Cof)$ and $p\in \rlp {\big(\llp{R(\Z)}\big)}$ such
that $f=pi$, and
\smallskip
\item [(c)]$j\in L^{{-1}}(\Cof\cap \WE)$ and $q\in \rlp {\big(\llp{R(\X)}\big)}$ such that
$f=qj$,
\end{enumerate}
then   the model structure on $\C$ left-induced by the adjunction $L \dashv R$ exists and is fibrantly generated by $\big(R(\X), R(\Z)\big)$. 
\end{cor}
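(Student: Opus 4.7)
The plan is to derive the corollary from Theorem \ref{thm:left-ind} through a suitable choice of parameter classes. Specifically, take $\WE'=L^{-1}\WE$, $\Cof'=L^{-1}\Cof$, $\X'=\rlp{\big(\llp{R\X}\big)}$, and $\Z'=\rlp{\big(\llp{R\Z}\big)}$. Because $\X'$ and $\Z'$ are defined by a right lifting property, they are automatically closed under retracts, so $\widehat{\X'}=\X'$ and $\widehat{\Z'}=\Z'$. The central identity, used repeatedly, is the standard adjunction fact that $L^{-1}(\llp{\mathcal{Y}})=\llp{R\mathcal{Y}}$ for any class $\mathcal{Y}$ of morphisms of $\M$, obtained by transposing lifting squares along $L \dashv R$. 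Applied to the fibrant generators of $\M$, this yields $\Cof'=\llp{R\Z}$ and $\Cof'\cap\WE'=L^{-1}(\Cof\cap\WE)=\llp{R\X}$.

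Most of the hypotheses of Theorem \ref{thm:left-ind} are then essentially automatic. The 2-out-of-3 property and identity containment for $\WE'$ follow from functoriality of $L$. Closure of $\WE'$ and $\Cof'$ under retracts follows from the corresponding closure properties of $\WE$ and $\Cof$ together with the fact that $L$ preserves retracts. Condition (c) of the theorem, namely $\Z'\subseteq\rlp{\Cof'}$ and $\X'\subseteq\rlp{(\Cof'\cap\WE')}$, is a tautology given our definitions. The factorization hypotheses (e)(i) and (e)(ii) of the theorem are precisely hypotheses (b) and (c) of the corollary. The only nonformal verification is condition (d), $\Z'\subseteq\WE'$: since $\Z'=\rlp{\big(\llp{R\Z}\big)}=\rlp{L^{-1}\Cof}$, this containment is exactly hypothesis (a) of the corollary.

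Theorem \ref{thm:left-ind} then delivers a model structure on $\C$ with weak equivalences $L^{-1}\WE$, cofibrations $L^{-1}\Cof$, and fibrations $\widehat{\X'}=\rlp{L^{-1}(\Cof\cap\WE)}$, which is by definition the left-induced model structure. Fibrant generation by $(R\X, R\Z)$ is immediate from the adjunction identities above, since these give $\llp{R\Z}=\Cof'$ and $\llp{R\X}=\Cof'\cap\WE'$. The one subtle point worth flagging is the choice of parameter classes: $\X'$ and $\Z'$ must be taken to be the full right lifting classes $\rlp{\big(\llp{R\X}\big)}$ and $\rlp{\big(\llp{R\Z}\big)}$ rather than $R\X$ and $R\Z$ themselves, because the factorization hypotheses of the corollary only guarantee that the right factors lie in these larger retract-closed classes. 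Otherwise the argument is a more-or-less direct translation of the dual result \cite[Corollary 5.15]{hess:hhg}.
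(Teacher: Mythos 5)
Your proposal is correct and follows exactly the paper's own proof: the paper likewise applies Theorem \ref{thm:left-ind} with $\WE'=L^{-1}(\WE)$, $\Cof'=L^{-1}(\Cof)$, $\X'=\rlp{\big(\llp{R(\X)}\big)}$, $\Z'=\rlp{\big(\llp{R(\Z)}\big)}$, and uses the same adjunction identity $\llp{R(\Z)}=L^{-1}(\llp{\Z})$ to identify condition (d) with hypothesis (a). The only difference is that you spell out the remaining (routine) verifications that the paper leaves implicit.
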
 

\begin{proof}  Apply Theorem \ref{thm:left-ind} to $\WE'= L^{-1}(\WE)$, $\Cof'=L^{-1}(\Cof)$, $\X'= \rlp {\big(\llp{R(\X)}\big)}$, and $\Z'=\rlp {\big(\llp{R(\Z)}\big)}$.  {Notice then that since $(L, R)$ is an adjoint pair, $\llp R(\Z) = L^{-1}(\llp \Z)$, so one can rewrite $\Z' \subseteq \WE'$ as in (a) above.}
%$L(\Z') \subseteq \WE$ is equivalent to $
\end{proof}

\begin{rmk} In both of the corollaries above, we see that condition (a) is the ``glue'' binding together the two potential weak factorization systems given by conditions (b) and (c), thus ensuring that we indeed obtain a model category.   In practice, the explicit description of Postnikov towers as limits of towers of pullbacks provides us with tools for proving condition (a) of Corollary \ref{cor:postnikov} that are not available in the more general case of Corollary \ref{cor:leftind-fibgen}.
\end{rmk}

Together with a recent result of Makkai and Rosick{\'y} \cite[Theorem 3.2]{makkai-rosicky}, Corollary \ref{cor:leftind-fibgen} implies the following existence result for left-induced model structures, when $\M$ and $\C$ are locally presentable. Recall that in a locally presentable category, any generating set of morphisms gives a cellular presentation of its weak factorization system.

{\begin{thm}\label{thm:mr} Let
$\adjunct{\C}{\M}{U}{F}$
be an adjoint pair of functors, where $\C$ is a locally presentable category, and $(\M, \Fib, \Cof, \WE)$ is a locally presentable model category that is cofibrantly generated by a pair of sets.  If  
$$\big(U^{-1}\Cof\big)\llp{} \subset U^{-1}\WE,$$ then the left-induced model structure on $\C$ exists and is cofibrantly generated by sets.
\end{thm}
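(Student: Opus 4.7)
The plan is to deduce Theorem \ref{thm:mr} from Corollary \ref{cor:leftind-fibgen} applied to the tautological fibrant generation of $\M$ by the pair $(\X, \Z) = (\Fib, \Fib \cap \WE)$, with the necessary factorizations supplied by \cite[Theorem 3.2]{makkai-rosicky}. First, I would note that the classes $U^{-1}\Cof$ and $U^{-1}(\Cof \cap \WE)$ in $\C$ are each closed under pushouts, transfinite composition, and retracts. This is immediate: $U$ preserves colimits as a left adjoint, and $\Cof$ and $\Cof \cap \WE$ enjoy these closure properties as left classes of cofibrantly generated weak factorization systems on $\M$.

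The heart of the argument is the invocation of \cite[Theorem 3.2]{makkai-rosicky}. Since $\M$ is cofibrantly generated by a pair of sets, both $\Cof$ and $\Cof \cap \WE$ are accessibly embedded accessible full subcategories of the arrow category $\M^{\to}$. Since $U$ is accessible as a left adjoint between locally presentable categories, the induced functor $U^{\to} \colon \C^{\to} \to \M^{\to}$ is also accessible, and hence $U^{-1}\Cof$ and $U^{-1}(\Cof \cap \WE)$ are accessibly embedded accessible full subcategories of $\C^{\to}$. The Makkai-Rosick\'y theorem then yields sets $\I'$ and $\J'$ in $\C$ cofibrantly generating weak factorization systems $\bigl(U^{-1}\Cof,\, \rlp{(U^{-1}\Cof)}\bigr)$ and $\bigl(U^{-1}(\Cof \cap \WE),\, \rlp{(U^{-1}(\Cof \cap \WE))}\bigr)$. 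In particular, every morphism in $\C$ admits the factorizations required by conditions (b) and (c) of Corollary \ref{cor:leftind-fibgen}; indeed, by adjoint transposition $\llp{F(\Fib \cap \WE)} = U^{-1}\Cof$ and $\llp{F\Fib} = U^{-1}(\Cof \cap \WE)$, so the two right classes appearing in (b) and (c) agree with those produced by Makkai-Rosick\'y.

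To conclude, I would observe that the given hypothesis $\rlp{(U^{-1}\Cof)} \subseteq U^{-1}\WE$ is precisely condition (a) of Corollary \ref{cor:leftind-fibgen}, rewritten via the same adjoint identity. Applying the corollary then produces the left-induced model structure on $\C$, whose two constituent weak factorization systems are the ones constructed above, and which is therefore cofibrantly generated by the sets $\I'$ and $\J'$. The main obstacle is the accessibility argument of the second step: one must carefully verify that $U^{-1}\Cof$ inherits the structure of an accessibly embedded accessible full subcategory in $\C^{\to}$, which ultimately rests on the stability of such subcategories under pullback along accessible functors between locally presentable categories. Once this technical point is in hand, the rest of the argument is a direct combination of Corollary \ref{cor:leftind-fibgen} with the factorizations supplied by \cite[Theorem 3.2]{makkai-rosicky}.
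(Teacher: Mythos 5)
Your proposal is correct and follows essentially the same route as the paper: both deduce the theorem from Corollary \ref{cor:leftind-fibgen} applied to the tautological fibrant generation $(\Fib,\Fib\cap\WE)$ of $\M$, obtain the two weak factorization systems on $\C$ from \cite[Theorem 3.2]{makkai-rosicky} via the adjunction identities $\llp{F(\Fib\cap\WE)}=U^{-1}(\Cof)$ and $\llp{F(\Fib)}=U^{-1}(\Cof\cap\WE)$, and identify the acyclicity hypothesis with condition (a). The only difference is one of detail: the paper leaves the verification of the Makkai--Rosick\'y hypotheses implicit (citing their Remark 3.8), whereas you spell out the accessibility argument.
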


\begin{proof}Under the hypotheses above, \cite[Theorem 3.2]{makkai-rosicky} implies that 
$$\Big(\llp{F(\Fib)}, \rlp {\big(\llp{F(\Fib)}\big)}\Big)\quad \text{and}\quad \Big(\llp{F(\Fib\cap \WE)}, \rlp {\big(\llp{F(\Fib\cap \WE)}\big)}\Big),$$
i.e.,
\[ \Big(U^{-1}(\Cof \cap \WE), \rlp{\big(U^{-1}(\Cof\cap\WE)\big)}\Big)\quad \text{and} \quad \Big(U^{-1}(\Cof), \rlp{\big(U^{-1}(\Cof)\big)}\Big) \] 
are weak factorization systems (cf.~\cite[Remark 3.8]{makkai-rosicky}), whence conditions (b) and (c) of Corollary \ref{cor:leftind-fibgen} hold. 
\end{proof}}

{In summary, if $\M$ is  combinatorial model category, $U \colon \C \to \M$ is a colimit preserving functor between locally presentable categories, and  $\big(U^{-1}\Cof\big)\llp{} \subset U^{-1}\WE$, then $\C$ is a combinatorial model category with the left-induced model category structure.}

\subsection{Elementary properties of left-induced structures}\label{sec:elem-prop}

In this section, we show that left-induced model structures inherit certain properness and enrichment properties.

Recall that a model structure is \emph{left proper} if the pushout of a weak equivalence along a cofibration is always a weak equivalence.

\begin{lem}\label{left properness of C}
Let $(\M, \Fib, \Cof, \WE)$ be a model category, and let $\C$ be a bicomplete category. Suppose that $\C$ admits a model structure that is left-induced from $\M$ via the adjunction
$$\adjunct{\C}{\M}{L}{R}.$$ 
If $\M$ is left proper, then $\C$ is left proper, too.
\end{lem}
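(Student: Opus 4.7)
The plan is essentially a one-paragraph argument, so the ``sketch'' will really just highlight the mechanism. Left properness of $\C$ amounts to showing that for every pushout square in $\C$
\[
\xymatrix{A \ar[r]^{w} \ar[d]_{c} & B \ar[d] \\ C \ar[r] & D}
\]
with $c \in L^{-1}(\Cof)$ and $w \in L^{-1}(\WE)$, the induced map $C \to D$ lies in $L^{-1}(\WE)$.

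First, I would invoke the fact that $L$ is a left adjoint, hence preserves all colimits; in particular, it sends the above pushout square to a pushout square in $\M$. By the very definitions of the left-induced cofibrations and weak equivalences, the image of $c$ under $L$ is a cofibration in $\M$ and the image of $w$ under $L$ is a weak equivalence in $\M$.

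Next, I would apply the hypothesis that $\M$ is left proper to the image pushout square: the pushout of $L(w) \in \WE$ along the cofibration $L(c)$ is therefore a weak equivalence in $\M$. Unpacking this back through the definition of the left-induced structure, $C \to D$ is a weak equivalence in $\C$, which completes the proof.

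There is no real obstacle here: the argument works because the two ``structural'' classes relevant to left properness (cofibrations and weak equivalences) are precisely those reflected by $L$, and $L$ as a left adjoint automatically commutes with the pushout operation at issue. This is in contrast with right properness, whose verification would require $L$ to commute with pullbacks, and thus would not follow from the left-induction setup by formal nonsense alone.
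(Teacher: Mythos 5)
Your proof is correct and follows exactly the same route as the paper's: apply $L$ (a left adjoint, hence pushout-preserving) to the square, use that cofibrations and weak equivalences in $\C$ are by definition the preimages under $L$ of those in $\M$, invoke left properness of $\M$, and reflect back. The closing remark contrasting this with right properness also matches the paper's observation that extra hypotheses are needed in that case.
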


\begin{proof}
Consider a pushout 
$$\xymatrix{
A \ar[r]^-{f} \ar[d]_-{g} \ar@{}[dr]|(.8){\ulcorner} & B \ar[d]^-{\overline{g}} \\
C \ar[r]^-{\overline{f}} & B \sqcup_A C
}$$
in $\C$, where $g \in L^{-1}(\WE)$ and $f \in L^{-1}(\Cof)$. Since left adjoints commute with colimits, applying $L$ gives rise to a pushout in $\M$
$$\xymatrix{
LA \ar[r]^-{Lf} \ar[d]_-{Lg} \ar@{}[dr]|(.8){\ulcorner} & LB \ar[d]^-{L\overline{g}} \\
LC \ar[r]^-{L\overline{f}} & L\big(B \sqcup_A C\big) \cong LB \sqcup_{LA} LC,
}$$
where $Lf \in \Cof$ and $Lg \in \WE$.  Since $\M$ is left proper, the morphism 
$$L\overline{g}\colon LB \to   LB \sqcup_{LA} LC\cong L\big(B \sqcup_A C\big)$$ 
is a weak equivalence in $\M$, whence $\overline{g} \in L^{-1}(\WE)$, so that $\C$ is left proper, as desired. 
\end{proof}

Extra assumptions are required for the transfer of right-properness. In Section \ref{sec:rtprop-enrich}, we prove a right properness result for diagram categories.

We refer the reader to Appendix \ref{app} for background on enriched model categories.

\begin{lem}\label{lem:C V model}
Let $(\V, \sm, \II)$ be a closed symmetric monoidal model category, and let $(\M, \Fib, \Cof, \WE)$ be a $\V$-model category. 
Suppose that $\C$ admits a model structure that is left-induced from $\M$ via the adjunction
$$\adjunct{\C}{\M}{L}{R}.$$
If $\C$ is a tensored and cotensored $\V$-category and if $L\dashv R$ is a $\V$-adjunction, then $\C$ is a $\V$-model category.
\end{lem}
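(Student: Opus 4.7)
The plan is to verify the pushout-product axiom and the unit axiom for the $\V$-tensoring/cotensoring on $\C$, reducing each to its counterpart in $\M$ via the functor $L$. The underlying principle is that $L$ inherits enough structure from being a $\V$-left adjoint to transport the enriched model-theoretic axioms backwards from $\M$ to $\C$.

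The first step is to unpack the $\V$-adjunction hypothesis. A standard fact from enriched category theory is that a $\V$-functor between tensored $\V$-categories that admits a $\V$-right adjoint preserves tensors: for every $A \in \C$ and $K \in \V$, there is a natural isomorphism $L(A \otimes K) \cong LA \otimes K$. Combining this with the fact that $L$ preserves colimits, I obtain, for any morphisms $i \colon A \to B$ in $\C$ and $j \colon K \to K'$ in $\V$, a natural isomorphism $L(i \Box j) \cong Li \Box j$ in $\M$, where $\Box$ denotes the pushout-product.

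The pushout-product axiom for $\C$ is then immediate. Given $i \in L^{-1}(\Cof)$ and a cofibration $j$ in $\V$, we have $Li \in \Cof$, so the pushout-product axiom in $\M$ yields $Li \Box j \in \Cof$. By the natural isomorphism above, $L(i \Box j) \in \Cof$, hence $i \Box j \in L^{-1}(\Cof)$. The two acyclicity clauses follow identically: by definition of the left-induced structure, $L^{-1}(\Cof) \cap L^{-1}(\WE) = L^{-1}(\Cof \cap \WE)$, so if either $i$ or $j$ is additionally a weak equivalence, the resulting pushout-product in $\M$ is an acyclic cofibration, which the isomorphism transfers back to $\C$.

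For the unit axiom, I observe that $L$ sends cofibrant objects in $\C$ to cofibrant objects in $\M$, because $L$ preserves the initial object (as a left adjoint) and sends cofibrations in $\C$ to cofibrations in $\M$ by construction. Given a cofibrant replacement $q \colon Q\II \to \II$ in $\V$ and any cofibrant $A \in \C$, the morphism $L(q \otimes A) \cong q \otimes LA$ is a weak equivalence in $\M$ by the unit axiom applied to the cofibrant object $LA$; hence $q \otimes A$ lies in $L^{-1}(\WE)$. The verification is entirely formal, and I do not anticipate any serious obstacle. The only point requiring mild care is confirming that the $\V$-adjunction hypothesis genuinely entails the tensor-preservation identity $L(A \otimes K) \cong LA \otimes K$, which is a standard consequence of enriched adjoint functor theory and can be cited from the appendix.
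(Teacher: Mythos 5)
Your proposal is correct and follows essentially the same route as the paper's proof: both reduce SM7 and the unit axiom in $\C$ to their counterparts in $\M$ by using that $L$ preserves colimits and tensors (the latter being exactly the ``tensor $\V$-functor'' formulation of the $\V$-adjunction hypothesis in Definition \ref{tens cotens}), yielding $L(j\widehat{\otimes}i)\cong j\widehat{\otimes}Li$ and $L(q\otimes\Id_C)\cong q\otimes\Id_{LC}$ with $LC$ cofibrant. No gaps.
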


\begin{proof}
We need to show that the SM7 axiom and the unit axiom from Definition \ref{Vstr}  hold in $\C$. 

Let $i\colon X \to Y$ be a cofibration in $\C$, i.e., $i\in  L^{-1}(\Cof)$, and let $j\colon V\to W$ be a cofibration in $\V$. We must show that the map 
$$j \widehat{\otimes} i \colon  V \ot Y \sqcup_{V \ot X} W \ot X \to W  \ot Y$$ 
is a cofibration in $\C$, which is a weak equivalence if either $j$ or $i$ is a weak equivalence. 
Observe that
\begin{eqnarray*} 
L(V \ot Y \sqcup_{V \ot X} W \ot X)  &\cong & L(V \ot Y) \sqcup_{L(V \ot X)} L(W \ot X)\\
& \cong& V \ot  LY \sqcup_{V \ot  LX} W \ot  LX,
\end{eqnarray*}
where the first isomorphism holds because $L$ commutes with colimits and the second isomorphism holds because $L\dashv R$ is a $\V$-adjunction (see Definition \ref{tens cotens}).

Up to isomorphism we can therefore identify the morphism $L(j \widehat{\otimes} i)$ with
$$j \widehat{\otimes} Li\colon V \ot  LY \sqcup_{V \ot  LX} W \ot  LX \to W \ot LY.$$
Since $Li\in \Cof$, and  $\M$ satisfies SM7, it follows that $j \widehat{\otimes} Li \in \Cof$, whence $j \widehat{\otimes} i$ is a cofibration in $\C$, as desired. If, in addition, $Li$ or $j$ is a weak equivalence, then by SM7 in $\M$, the map $j \widehat{\otimes} Li \in \Cof \cap \WE$, whence $j \widehat{\otimes} i $ is an acyclic cofibration in $\C$. 
 
To prove the unit axiom in $\C$, we need to check that for any cofibrant object $C$ in $\C$, the morphism $q \ot \Id_{C}\colon  Q\II \ot C \to \II \ot C$ is a weak equivalence in $\C$, where $q \colon Q\II \to \II$ denotes cofibrant replacement. By the definition of the left-induced model structure, this is equivalent to asking that the map $L(q \ot \Id_{C})\colon  L(Q\II \ot C) \to L(\II \ot C)$ be a weak equivalence in $\M$. Since $L(Q\II \ot C) \cong Q\II \ot LC$ and $L(\II \ot C) \cong \II \ot LC$, $L(q \ot \Id_{C})$ is isomorphic to  
 $$q \ot \Id_{LC}\colon  Q\II \ot LC \to \II \ot LC.$$
 Because $L$ preserves cofibrations and initial objects, $LC$ is a cofibrant object in $\M$. Since the unit axiom holds in $\M$, we conclude that $q \ot \Id_{LC}$ is a weak equivalence in $\M$; therefore, $q \ot \Id_{C}$ is a weak equivalence in the left-induced model structure on $\C$.
\end{proof}

In Section \ref{sec:rtprop-enrich} we apply Lemma \ref{lem:C V model} to establish the existence of enriched model structure on certain diagram categories.

\section{Examples}\label{sec:examples}

In this section we provide examples to illustrate the theory of section \ref{sec:left-induced}. We first give an explicit Postnikov presentation of the category of non-negatively graded chain complexes over a field, then apply Theorem \ref{thm:mr} to prove the existence of a left-induced model category structure on the category of $H$-comodule algebras over a field, where $H$ is a chain Hopf algebra of finite type.

\subsection{An explicit Postnikov presentation}

Let $\kk$ be a field, and let $\Ch$ be the category of non-negatively graded chain complexes of $\kk$-vector spaces.  This category admits a model structure whose weak equivalences are quasi-isomorphisms, whose cofibrations are degreewise monomorphisms, and whose fibrations are degreewise surjective in positive degrees \cite{dwyer-spalinski}. Let $\Ch_{\mathrm{fin}}$ denote the  full subcategory of  chain complexes that are finite dimensional in each degree.  This category is not complete or cocomplete, but does admit all limits and colimits that are finite in each degree. The model category structure of $\Ch$ therefore restricts to a model category structure on  $\Ch_{\mathrm{fin}}$ , with the same distinguished classes of morphisms, but where, {as originally defined by Quillen~\cite{quillen-homotopical}}, we assume only finite completeness  and cocompleteness. This suffices to define and study the associated homotopy category.  

Our aim in this section is to show that the model structure on $\Ch_{\mathrm{fin}}$ admits a Postnikov presentation, which provides an elementary and non-obvious characterization of the fibrations and acyclic fibrations.

Write $D^n$ for the $n$-disc and $S^n$ for the $n$-sphere, defined as follows. \[ (D^n)_\ell = \begin{cases} \kk : \ell = n, n-1 \\ 0 : \mathrm{else}\end{cases} \qquad (S^n)_\ell = \begin{cases} \kk : \ell = n \\ 0 : \mathrm{else}\end{cases}\]  It is productive to think of $S^n$ as $K(\kk,n)$ and $D^n$ as based paths in the Eilenberg Mac Lane space $K(\kk,n-1)$; a chain map $X \to D^n$ is an element of $X^*_{n-1}$ and a chain map $X \to S^n$ is an element of $Z^nX^*$, where $X^*$ denotes the dual chain complex.

Define \[ \X = \{ p_n \colon D^n \to S^n\}_{n \geq 1} \cup \{  p_0 \colon 0 \to S^0 \} \qquad \mathrm{and}\qquad \Z = \{ q_n \colon D^n \to 0 \}_{n  \geq 1}. \]

\begin{thm}\label{thm:chain} The pair $(\X,\Z)$ defines a Postnikov presentation for the model category $\Ch_{\mathrm{fin}}$
\end{thm}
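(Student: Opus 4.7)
To verify that $(\X,\Z)$ is a Postnikov presentation, I would separately establish the two claimed characterizations: $\Fib\cap\WE = \widehat{\post_{\Z}}$ and $\Fib = \widehat{\post_{\X}}$. The containments $\widehat{\post_{\Z}} \subseteq \Fib\cap\WE$ and $\widehat{\post_{\X}} \subseteq \Fib$ reduce to checking that each map in $\X$ is a fibration and each in $\Z$ is an acyclic fibration -- an immediate verification from the definitions of $p_n$, $p_0$, and $q_n$ -- followed by standard closure of these classes under pullback, retract, and inverse limits of towers. The limit step uses that in $\Ch_{\mathrm{fin}}$ the tower $Y^\beta$ lives degreewise in finite-dimensional vector spaces, so any inverse limit that exists in $\Ch_{\mathrm{fin}}$ must stabilize in each degree; the composite map to $Y^0$ then inherits degreewise surjectivity in positive degrees (and quasi-isomorphism) from the individual steps.

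For $\Fib\cap\WE\subseteq\widehat{\post_{\Z}}$, given an acyclic fibration $f\colon A\to B$, the kernel $K = \ker f$ is acyclic in $\Ch_{\mathrm{fin}}$ and hence (over the field $\kk$) decomposes as a direct sum $K \cong \bigoplus_i D^{n_i}$. Each $D^n$ is injective in chain complexes because $\hom(-,D^n)\cong(-)^*_{n-1}$ is an exact functor. Therefore $K\hookrightarrow A$ splits as a chain map, so $A\cong B\oplus K$ and $f$ is a direct-summand projection. Well-ordering the disc summands, the Postnikov tower $Y^0 = B$, $Y^{\beta+1} = Y^\beta\times_0 D^{n_\beta}$ (pullback of $q_{n_\beta}$) has inverse limit $A$, since only finitely many summands contribute in any fixed degree.

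For the harder containment $\Fib\subseteq\widehat{\post_{\X}}$, I would factor any fibration $f\colon A\to B$ as $A\twoheadrightarrow f(A)\hookrightarrow B$ and treat the two pieces separately, appealing to closure of $\post_{\X}$ under composition (Remark~\ref{rmk:postx-closed}). The inclusion $f(A)\hookrightarrow B$ differs from $B$ only in degree $0$, where $B_0/f(A)_0$ is finite-dimensional; a dual basis provides cocycles $\alpha_i\colon B\to S^0$ (each automatically closed since $d_B(B_1)\subseteq f(A_0)$) whose successive pullbacks of $p_0$ realize the inclusion as a finite Postnikov tower. For the surjective piece, I choose a splitting $A\cong K\oplus B$ of graded vector spaces, producing a twisting map $\phi\colon B\to K$ of degree $-1$ with $d_K\phi+\phi d_B=0$. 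Decomposing $K$ as a direct sum of spheres and discs and well-ordering the summands, I add each summand $K_i$ to the running tower: for $K_i = S^n$, one pullback of $p_{n+1}$ along the chain map $Y^\beta\to S^{n+1}$ encoding the $S^n$-component of $\phi$; for $K_i = D^n$, two successive pullbacks (first of $p_{n+1}$ to introduce the generator in degree $n$, then of $p_n$ to introduce the generator in degree $n-1$ with $ds=u$), the second attaching cocycle being designed to simultaneously install the internal differential of $D^n$ and the relevant component of $\phi$.

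The main obstacle is showing that the attaching cocycles used at each stage are well-defined. Since $K$ is a subcomplex of $A$ and each $K_i$ is a direct summand of $K$ as a chain complex, each component $\phi_i\colon B\to K_i$ of $\phi$ is individually a cocycle; extending $\phi_i$ by zero on previously added summands remains a cocycle with respect to the modified differential on $Y^\beta$, because the summands $K_j$ do not interact with $K_i$ under $\phi$. This is the key compatibility check and the place where the direct-sum structure of $K$ is essential. Limits at limit ordinals present no difficulty because finiteness of each degree of $K$ ensures that the tower stabilizes degreewise. Once verified, the inverse limit of the tower is $A$ with its original differential, and the composite map to $Y^0 = B$ equals $f$, so $f\in\post_{\X}$ without any appeal to retracts.
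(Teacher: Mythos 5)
Your proof is correct, but it takes a genuinely different route from the paper's. The paper never tries to realize a given (acyclic) fibration directly as a Postnikov tower: instead it factors an \emph{arbitrary} map as a (degreewise) monomorphism, respectively an injective quasi-isomorphism, followed by an element of $\post_{\Z}$, respectively $\post_{\X}$, and then invokes the retract argument, so it only ever concludes $\Fib=\widehat{\post_{\X}}$ and $\Fib\cap\WE=\widehat{\post_{\Z}}$ up to retracts. For $\Z$ this is a one-line factorization through $Y\times\prod_n\prod_{\beta_n}D^{n+1}$ via evaluation against a basis; for $\X$ it is a homology-level induction (first product with spheres to make the map injective on homology, then kill $\coker H_{n+1}j^n$ degree by degree by pulling back $\prod D^{n+1}\to\prod S^{n+1}$). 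You instead split off the kernel of the given fibration, decompose it over the field into spheres and discs, and rebuild the total complex summand by summand, encoding the twisted differential in the attaching cocycles, with a separate finite tower of pullbacks of $p_0$ handling the failure of surjectivity in degree $0$. Your approach buys the sharper conclusions $\Fib\cap\WE=\post_{\Z}$ and $\Fib=\post_{\X}$ with no retract closure, and makes the Postnikov-tower structure of a fibration of finite-type complexes completely explicit; the paper's approach buys the factorizations of arbitrary morphisms (which is what is actually consumed by Corollary \ref{cor:postnikov} and Theorem \ref{thm:ab}) and avoids the bookkeeping with twisting functions. Two small points you should make explicit: acyclic fibrations here are surjective in degree $0$ as well (this follows from surjectivity of $H_0$ together with $d(B_1)=f(d A_1)\subseteq f(A_0)$, and is silently used both when you deduce acyclicity of $\ker f$ and when you identify $f$ with a direct-summand projection), and the cocycle verifications for the attaching maps---that each component of $\phi$, extended by zero on previously attached summands, still kills boundaries for the modified differential---deserve the full computation using $d_K\phi+\phi d=0$ and the fact that $d_K$ preserves the summand decomposition, since that is where the argument could fail and where it actually uses that $K$ splits as a direct sum of complexes.
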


The proof of Theorem \ref{thm:chain} will be given by the following two lemmas, which show that the two constituent weak factorization systems have Postnikov presentations. 

\begin{lem}\label{lem:Z-chain} The set $\Z$ defines a Postnikov presentation for the acyclic fibrations in $\Ch_{\mathrm{fin}}$.
\end{lem}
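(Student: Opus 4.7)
The plan is to establish both inclusions of $\Fib \cap \WE = \widehat{\post_\Z}$ in $\Ch_{\mathrm{fin}}$. For $\widehat{\post_\Z} \subseteq \Fib \cap \WE$, the starting observation is that each generator $q_n \colon D^n \to 0$ is already an acyclic fibration, because $D^n$ is contractible for $n \geq 1$ and every map into the zero object is trivially surjective in every degree. Since the acyclic fibrations form the right class of a weak factorization system, they are closed under retracts, pullbacks, and limits of ordinal-indexed towers, which immediately gives $\widehat{\post_\Z} \subseteq \Fib \cap \WE$.

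For the converse, I would prove the stronger claim that every acyclic fibration $f \colon X \to Y$ already sits in $\post_\Z$ up to isomorphism. The long exact sequence associated to $0 \to K \to X \xrightarrow{f} Y \to 0$ forces the kernel $K$ to be acyclic. Every object of $\Ch_{\mathrm{fin}}$ is cofibrant, because cofibrations are monomorphisms and the initial object is $0$, so lifting $\mathrm{id}_Y$ against $f$ produces a chain section $s \colon Y \to X$ and an isomorphism $X \cong Y \oplus K$ under which $f$ becomes the projection to $Y$. A standard splitting argument (choose complements $C_n$ to the cycles $Z_n \subset K_n$; the differential then restricts to isomorphisms $C_n \xrightarrow{\cong} Z_{n-1}$) decomposes $K$ as a direct sum $\bigoplus_{i \in I} D^{n_i}$ with each $n_i \geq 1$, and finite-dimensionality in each degree forces this family to be locally finite.

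To realize $f$ as a Postnikov tower, enumerate the summands of $K$ by an ordinal $\lambda \leq \omega$, set $Y^0 = Y$, and build $Y^{\beta+1} = Y^\beta \times_0 D^{n_\beta}$ at each successor (which simplifies to $Y^\beta \oplus D^{n_\beta}$ because $0$ is a zero object) and $Y^\gamma = \lim_{\beta < \gamma} Y^\beta$ at limit stages. The composite $\lim_{\beta < \lambda} Y^\beta \to Y^0$ is then the projection $Y \oplus K \to Y$, which is (isomorphic to) $f$, realizing $f$ as a $\Z$-Postnikov tower.

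The one delicate point, which I expect to be the main obstacle, is set-theoretic: $\Ch_{\mathrm{fin}}$ admits only those limits that are finite in each degree, so I must verify that every $Y^\gamma$ (including $\gamma = \lambda$) actually lives in $\Ch_{\mathrm{fin}}$. The local finiteness of the family $\{n_i\}$ does precisely this: in any fixed degree $k$, only finitely many $D^{n_i}$ contribute, so the transition maps $Y^{\beta+1} \to Y^\beta$ become identities in degree $k$ past a finite stage. The tower stabilizes degreewise, the inverse limit reduces to the finite direct sum $Y \oplus \bigoplus_{i < \gamma} D^{n_i}$ in each degree, and everything stays inside $\Ch_{\mathrm{fin}}$.
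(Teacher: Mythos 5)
Your proof is correct, but it takes a genuinely different route from the paper's. The paper factors an \emph{arbitrary} chain map $f\colon X\to Y$ as a degreewise monomorphism followed by a map in $\post_\Z$, namely $X \xrightarrow{(f,\mathrm{ev})} Y\times \prod_n\prod_{\beta_n} D^{n+1} \xrightarrow{\mathrm{proj}} Y$, where the evaluation component is built from dual basis vectors of each $X_n$; the retract argument then exhibits any acyclic fibration as a retract of the projection, giving $\Fib\cap\WE\subseteq\widehat{\post_\Z}$ in two lines (the reverse inclusion is left implicit). You instead work only with acyclic fibrations and prove the sharper statement that each one is, up to isomorphism, itself a $\Z$-Postnikov tower: you split $f$ using cofibrancy of all objects, decompose the acyclic kernel as a locally finite sum of discs, and assemble the tower degree by degree, with your stabilization observation correctly handling the only real subtlety, namely that the inverse limit must stay in $\Ch_{\mathrm{fin}}$ and agree there with the direct sum. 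Your version is more explicit and avoids passing to the retract closure, and you also supply the easy inclusion $\widehat{\post_\Z}\subseteq\Fib\cap\WE$ that the paper omits. What you lose is a byproduct the paper exploits later: its factorization applies to \emph{every} morphism, and the proof of Lemma \ref{lem:X-chain} invokes exactly that to reduce to the case where $f$ is degreewise injective, so if you adopted your argument you would need to extract that general factorization separately.
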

\begin{proof}
Let  $f \colon X \to Y$ be any chain map.   For each $n$, choose a basis $\beta _{n}$ of $X_{n}$.  The chain map $f$ factors as \[ X \xrightarrow{(f, \mathrm{ev})} Y \times \prod_n \prod_{\beta _{n}} D^{n+1} \xrightarrow{\mathrm{proj}} Y.\] The right factor is a pullback of products of maps $q_n$, which may be re-expressed as a limit of a tower of pullbacks of maps in $\Z$. Hence the right factor lies in $\post_\Z$, and moreover the left factor is a degreewise monomorphism. By the retract argument, if $f$ is an acyclic fibration, then $f$ is a retract of the displayed right factor. Hence, the acyclic fibrations admit a Postnikov presentation.
\end{proof}

\begin{lem}\label{lem:X-chain} The set $\X$ defines a Postnikov presentation for the fibrations in $\Ch_{\mathrm{fin}}$.
\end{lem}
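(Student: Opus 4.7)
The plan is to verify the two defining conditions of a Postnikov presentation of the fibrations, namely $\llp{\X} = \Cof \cap \WE$ and $\Fib = \widehat{\post_\X}$. The first is a direct calculation. Using the standard identifications of chain maps $X \to D^n$ with elements of $X^*_{n-1}$ and of chain maps $X \to S^n$ with cocycles in $Z^n(X^*)$, the lifting property $g \boxslash p_n$ for $n \geq 1$ unpacks as the following cochain-extension condition: given any $\alpha \in A^*_{n-1}$ and any $\beta \in Z^n(B^*)$ with $d\alpha = g^*\beta$, there exists $h \in B^*_{n-1}$ with $g^*h = \alpha$ and $dh = \beta$. The lifting property $g \boxslash p_0$ unpacks as the surjectivity of $H_0(g)$. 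A routine cochain calculation, essentially the dual of the lifting test for acyclic fibrations against spheres and disks, shows that these conditions together are equivalent to $g$ being a degreewise monomorphism inducing an isomorphism on homology, i.e., an acyclic cofibration.

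For the remaining equality, the inclusion $\widehat{\post_\X} \subseteq \Fib$ is immediate since each $p_n$ is a fibration and $\Fib$ is closed under pullback, composition, inverse limits of towers, and retracts. For the reverse inclusion I will apply the retract argument to Brown's mapping path-space factorization. Given a fibration $f \colon X \to Y$, take the standard path object $Y^I$ (built as $Y \oplus Y[1] \oplus Y$ with the usual twisted differential) and set $P_f := X \times_Y Y^I$, producing $f = \tilde f \circ i$ with $i \colon X \to P_f$ an acyclic cofibration and $\tilde f \colon P_f \to Y$ a fibration. Since $f$ is a fibration and $i$ an acyclic cofibration, the canonical lifting square supplies a retraction $r \colon P_f \to X$ with $ri = \mathrm{id}_X$ and $fr = \tilde f$, exhibiting $f$ as a retract of $\tilde f$. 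It therefore suffices to prove $\tilde f \in \widehat{\post_\X}$.

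To do this, I will rewrite $P_f$ as the pullback $(X \times Y) \times_{Y \times Y} Y^I$, so that $\tilde f$ factors as $P_f \to X \times Y \xrightarrow{\mathrm{proj}_Y} Y$. The first factor is the pullback of $(\mathrm{ev}_0, \mathrm{ev}_1) \colon Y^I \to Y \times Y$ along $f \times \mathrm{id}_Y$, while $\mathrm{proj}_Y$ is the pullback of $X \to 0$ along $Y \to 0$. Over the field $\kk$, any object $Z \in \Ch_{\mathrm{fin}}$ admits a chain-homotopy splitting $Z \cong \bigoplus_n S^n \otimes H_n(Z) \oplus \bigoplus_n D^n \otimes C_n$, the sum being finite-dimensional in each degree. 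Each $S^n \to 0$ is the pullback of $p_{n+1}$ along the unique map $0 \to S^{n+1}$, and each $D^n \to 0$ is the composition $D^n \xrightarrow{p_n} S^n \to 0$; hence both lie in $\post_\X$, and so does $Z \to 0$. Applying this to $X$ handles $\mathrm{proj}_Y$.

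The core step is to show $Y^I \to Y \times Y \in \widehat{\post_\X}$. Decomposing $Y = A \oplus D$ into its sphere and disk parts and using that the path-object functor preserves products, one obtains $Y^I \cong A^I \oplus D^I$ and identifies $Y^I \to Y \times Y$, up to a canonical reordering of summands, with $(A^I \to A \times A) \oplus (D^I \to D \times D)$. The disk piece $D^I \to D \times D$ is a surjection between acyclic complexes, hence an acyclic fibration, and so lies in $\widehat{\post_\Z} \subseteq \widehat{\post_\X}$ by Lemma~\ref{lem:Z-chain}. For the sphere piece, a direct computation shows that $(S^n)^I \to S^n \times S^n$ is the pullback of $p_n$ along the difference cocycle $(a_0, a_1) \mapsto a_0 - a_1$ for $n \geq 1$, with the $n = 0$ case handled analogously using $p_0$. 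The step I expect to be the main obstacle is the careful bookkeeping of the infinite direct sums that appear in these decompositions: to remain inside $\post_\X$ one must present each such sum as an $\omega$-indexed inverse limit of finite products of maps already known to be in $\post_\X$, and verify at each stage that the required products exist in $\Ch_{\mathrm{fin}}$, i.e.\ that each degree remains finite-dimensional. Once this is done, the closure of $\post_\X$ under ordinal-indexed inverse limits noted in Remark~\ref{rmk:postx-closed} completes the argument.
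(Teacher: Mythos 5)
Your overall strategy---reduce to the mapping path-space factorization via the retract argument, then analyze $Y^I \to Y\times Y$ and $X \to 0$ using the decomposition of every object of $\Ch_{\mathrm{fin}}$ into spheres and disks---is genuinely different from the paper's proof, which instead constructs the required factorization of an arbitrary (degreewise injective) map directly, as the limit of a tower of pullbacks that successively kills the cokernel on homology in each degree. Your route has the appeal of outsourcing the homological bookkeeping to the classification of chain complexes over a field. However, there is a genuine gap at the first step: the complex $Y \oplus Y[1] \oplus Y$ with the twisted differential is \emph{not} a path object in the non-negatively graded setting. The downward shift $Y[1]$ gets truncated at degree $-1$, and as a result the constant-path inclusion $Y \to Y^I$ fails to be a quasi-isomorphism whenever $Y_0 \neq 0$: concretely, $(S^0)^I = \kk^2$ concentrated in degree $0$, and $S^0 \to (S^0)^I$ is the diagonal, which is not a quasi-isomorphism. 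Consequently $i\colon X \to P_f$ is not an acyclic cofibration, the lifting square you invoke need not admit a lift, and $f$ is not exhibited as a retract of $\tilde f$. The same defect surfaces later: $H_0\big((D^1)^I\big) = \kk \neq 0$, so $D^I \to D\times D$ is not in general a surjection between acyclic complexes.

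The gap is repairable. Use the correct path object for non-negatively graded complexes, namely $PY_n = Y_n \oplus Y_{n+1}\oplus Y_n$ for $n\geq 1$ and $PY_0 = \{(y_0,y,y_1) \mid dy = y_1 - y_0\}$; then $PY \to Y\times Y$ is surjective only in positive degrees, which is all a fibration requires. With this choice your identification of $P(S^n)\to S^n\times S^n$ with the pullback of $p_n$ along the difference map holds for all $n\geq 0$ (for $n=0$ it is the diagonal $S^0 \to S^0\times S^0$, the pullback of $p_0$), and $P(D^n)\to D^n\times D^n$ becomes an acyclic fibration (an isomorphism when $n=1$), so the rest of your analysis goes through. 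Two smaller points: the verification that $\llp{\X}=\Cof\cap\WE$ in your first paragraph is not needed, since a Postnikov presentation only requires $\Fib = \widehat{\post_{\X}}$ and the lifting characterization follows formally; and since $\widehat{\post_{\X}}$ is not obviously closed under composition, when you compose $\mathrm{proj}_Y$ with the pullback of $PY\to Y\times Y$ you should observe that the retracts produced by Lemma \ref{lem:Z-chain} (and hence those entering the disk pieces) are retracts over the identity of the codomain, so that post-composing with a further element of $\post_{\X}$ still yields an element of $\widehat{\post_{\X}}$.
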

\begin{proof}
Consider a chain map $f:X\to Y$.   We wish to factor $f$ as
$$X\xrightarrow j \widetilde X \xrightarrow {p } Y,$$
where $j$ is an injective quasi-isomorphism, and $p$ is an element of $\post_{\X}$.    {If $f$ is a fibration, then $f$ is a retract of $p$, whence $\Fib = \widehat{\post _{\X}}$, as is required in the definition of Postnikov presentation.}

We begin by considering two special cases.  Observe first that $S^{n} \to 0$ is an element of $\post_{\X}$ for all $n\geq 0$, since it is the pullback of
$$0\to S^{n+1}\xleftarrow{} D^{n+1}.$$
Second, it follows that $\Z\subset \post_{\X}$ and thus that $\post_{\Z}\subset \post _{\X}$ {by Remark \ref{rmk:postx-closed}}, since $D^{n} \to 0$ is equal to the composite
$$D^{n}\to S^{n}\to 0$$
 for all $n\geq 1$.  By Lemma \ref{lem:Z-chain}, we can therefore assume that $f\colon X\to Y$ is degreewise-injective.
 
 Since we are working over a field, for every $n$, there are decompositions
 $$X_{n}= H_{n}X \oplus \overline X_{n} .$$ {Note that this decomposition and similar ones below are not natural.}
 Fix a basis $\alpha_{n}$ of $\ker H_{n}f$ for every $n$, and let $\W$ denote the pullback of
 $$Y \to 0 \xleftarrow{} \prod_{n\geq 0}\prod_{\alpha _{n}}S^{n},$$
 i.e., 
 $$\W= Y\times \prod_{n\geq 0}\prod_{\alpha _{n}}S^{n}.$$
Because $S^{n}\to 0$ is an element of $\post_{\X}$ for all $n$, the induced map $f'\colon \W\to Y$ is also an element of $\post_{\X}$.
 
 For each $n$ and each $a\in \alpha_{n}$, since $a$ is a cycle that is not a boundary, there is chain map $j_{a}\colon X\to S^{n}$ such that $j_{a}(a)$ is a generator of $S^{n}$, while $j_{a}(x)=0$ if $x$ is not a multiple of $a$.  Let 
 $$j=(f, \prod_{n\geq 0}\prod_{a\in\alpha _{n}} j_{a})\colon  X \to \W$$ 
 {Because $f$ was assumed to be a degreewise injection, $j$ is also a degreewise injection. By construction, $j$ also induces a degreewise injection in homology.}
 
 We now  modify $\W$ degree by degree, to obtain a map $\tilde\jmath\colon  X \to \widetilde W$ that is also surjective in homology.  {Our  strategy is to extend $\W$, adding generators that become the images of the
cycles  in $W$ that are not hit by $j$, thus killing off these cycles in homology.} First note that there is a decomposition 
$$\W_{0}= \im H_{0}j\oplus \coker H_{0}j\oplus \overline \W_{0}.$$
{We modify $\W_0$ so as to kill $\coker H_0j$.} Let $\beta_{0}$ denote a basis of $\coker H_{0}j$. Because none of the elements of $\coker H_{0}j$ is a boundary, there is a chain map
$$g^{0}\colon \W\to \prod_{\beta_{0}}S^{0}$$
that sends any element not in $\coker H_{0}j$ to 0 and sends an element of $\beta_{0}$ to a generator of the corresponding copy of $S^{0}$ {and to zero in each of the other copies of $S^0$.}  Note that $g^{0}j=0$. 

Let $W^{0}$ denote the pullback of 
$$\W\xrightarrow{g^{0}}  \prod_{\beta_{0}}S^{0}\leftarrow 0.$$
Since $\beta_{0}$ is finite, $\prod_{\beta_{0}}S^{0}\cong \bigoplus_{\beta_{0}}S^{0}\cong \coker H_{0}j$. The induced map $p^{0}\colon W^{0}\to \W$ is an element of $\post_{X}$, and the map $j^{0}\colon X \to W^{0}$ induced by $j\colon X\to \W$ and $X\to 0$ is a degreewise injection such that $H_{0}j^{0}$ is an isomorphism, and $H_k j^0$ is an injection for all $k$.

Suppose now that for some $n\geq 0$,  the chain map $j\colon X\to \W$ can be factored as
$$X\xrightarrow {j^{n}} W^{n} \xrightarrow {p^{n}} \W,$$
where $j^{n}$ is a degreewise injection such that $H_{k}j^{n}$ is an isomorphism for all $k\leq n$ and an injection for all $k>n$.
There are decompositions
$$W^{n}_{n}=H_{n}W^{n}\oplus\overline W^{n}_{n}= \im H_{n}j^{n} \oplus  \overline W^{n}_{n}$$
and
$$W^{n}_{n+1}=H_{n+1}W^{n}\oplus\overline W^{n}_{n+1}=\im H_{n+1}j^{n} \oplus \coker H_{n+1}j^{n} \oplus \overline W^{n}_{n+1},$$ 
where every element  of $H_{n}W^{n}$ and of $H_{n+1}W^{n}$  is a cycle that is not a boundary.  In particular, the differential $d$ of $W^{n}$ satisfies $d(\overline W^{n}_{n+1})\subseteq \overline W^{n}_{n}$. We now modify $W^n_n$ by attaching a copy of $\coker H_{n+1}j^{n}$ and modify the differential $d\colon W^n_{n+1} \to W^n_n$ so that $\coker H_{n+1}j^{n}$ maps isomorphically onto this vector space. In this way, these elements disappear from $(n+1)$-st homology.

Let $\beta_{n+1}$ be a basis for $\coker H_{n+1}j^{n}$.  None of the elements of $\coker H_{n+1}j^{n}$ is a boundary, and all are cycles, so there is a chain map
$$g^{n+1}\colon X^{n}\to \prod_{\beta_{n+1}}S^{n+1}$$
that sends an element of $\beta_{n+1}$ to a generator of the corresponding copy of $S^{n+1}$ and any element not in $\coker H_{n+1}j^{n}$ to 0.   Consider the decomposition
$$X_{n+1}=H_{n+1}X\oplus \overline X_{n+1}.$$ 
Since every element of $\overline X_{n+1}$ is either a cycle that is a boundary or a noncycle, and $j^{n}$ is injective, it follows that $j^{n}(  \overline X_{n+1})\subseteq  \overline W^{n}_{n+1}$ and therefore that
$$g^{n+1}j^{n}=0\colon  X \to \prod_{\beta_{n+1}}S^{n+1}.$$

Let $W^{n+1}$ denote the pullback of    
$$W^{n}\xrightarrow{g_{n+1}}  \prod_{\beta_{n+1}}S^{n+1}\leftarrow \prod_{\beta_{n+1}}D^{n+1}.$$
Note that $W^{n+1}$ differs from $W^n$ only in level $n$, where $W^{n+1}_{n} \cong W^n_{n} \oplus \prod_{\beta_{n+1}} \kk$. The induced map $p^{n+1}\colon W^{n+1}\to W^{n}$ is an element of $\post_{\X}$, and the map $j^{n+1}\colon X \to W^{n+1}$ induced by $j\colon X\to W^{n}$ and $0\colon X\to \prod_{\beta_{n+1}}D^{n+1}$ is a degreewise injection such that $H_{k}j^{n+1}$ is an isomorphism for all $k\leq n+1$ and an injection for all $k>n+1$. 

To complete the argument, let $\widetilde W=\lim_{n}W^{n}$. Note that this limit is finite in each degree and is therefore an object in $\Ch_{\mathrm{fin}}$;  indeed, $\widetilde W_n \cong W^{n+1}_n$ since $W^k_n \cong W^{n+1}_n$ for all $k \geq n+1$. 
 Let $j\colon X \to \widetilde W$ denote the map induced by the family $j^{n}\colon  X \to W^{n}$, let $\tilde p\colon \widetilde W \to \W$ denote the composite of the $p^{n}\colon X^{n}\to X^{n-1}$, and let $p=f'\tilde p$.
 It follows that $j$ is an injective quasi-isomorphism, and $p$ is an element of $\post_{\X}$.
\end{proof}

\subsection{An application of Theorem  \ref{thm:mr}}

In this section $\Ch$ refers again to the category of non-negatively graded chain complexes over a field $\kk$.  Let $(\Ch, \Fib, \Cof, \WE)$ denote the projective model category structure on $\Ch$, as above.   It is well known that $(\Ch, \Fib, \Cof, \WE)$ is a combinatorial model category \cite[3.7]{shipley-hz}. Since we are working over a field, all objects in $\Ch$ are both fibrant and cofibrant.

The theorem below, the proof of which relies strongly on Theorem \ref{thm:mr}, plays an important role in Karpova's study of homotopic Hopf-Galois extensions of differential graded algebras in \cite{karpova}.  The mathematical concepts that are in play are the following.

\begin{defn}  Let $(\V, \otimes , \II)$ be a symmetric monoidal category.  Let $H$ be a bimonoid in $\V$.  An \emph{$H$-comodule algebra} is an $H$-comodule in the category of monoids in $\V$.
\end{defn}

\begin{rmk}  For every bimonoid $H$, there is an adjunction
$$\adjunct{\mathsf{Alg}^{H}}{\mathsf{Alg}}{U}{-\otimes H},$$
where $\mathsf{Alg}$ denotes the category of monoids in $\V$ and $\mathsf{Alg}^{H}$ the category of $H$-comodule algebras, and $U$ is the forgetful functor.  Restricting to augmented monoids, we obtain an adjunction
$$\adjunct{\mathsf{Alg}_{*}^{H}}{\mathsf{Alg}_{*}}{U}{-\otimes H}.$$

Note that  $\mathsf{Alg}^{H}$ (respectively, $\mathsf{Alg}_{*}^{H}$)  is the category of coalgebras for the comonad 
$$\K_{H}=(-\otimes H, -\otimes \delta, -\otimes \ve)$$ 
acting on $\mathsf{Alg}$ (respectively, $\mathsf{Alg}_{*}$), where $\delta$ and $\ve$ are the comultiplication and counit of $H$, respectively. 
\end{rmk}

\begin{rmk}\label{rmk:comonad-cocomplete} For use in the proof below, we recall the elementary fact that for any comonad $\K$ acting on a cocomplete category $\C$, the category $\C_{\K}$ of $\K$-coalgebras is cocomplete, with colimits created in $\C$. 
\end{rmk}

\begin{rmk} \label{rmk:ss+ar} The following observation, combining results  due to Schwede and {the last author}~\cite{schwede-shipley} and to Ad{\'a}mek and Rosick{\'y}~\cite{adamek-rosicky}, is also useful in the proof below. Let $(\M, \Fib, \Cof, \WE)$ be a combinatorial model category with a set $\J$ of generating acyclic cofibrations.  Let $\T =(T, \mu, \eta)$ be a monad acting on $\M$.  Suppose that $T$ preserves filtered colimits, and $U^{\T}(\cell_{F^{\T}\J})\subseteq \WE$.  By \cite[Lemma 2.3]{schwede-shipley},   $\M^{\T}$ admits a cellularly presented, right-induced model category structure. On the other hand, as proved in \cite[Theorem 2.78] {adamek-rosicky},  since $\M$ is locally presentable, and $\T$ preserves filtered colimits, $\M^{\T}$ is also locally presentable.  It follows that the free $\T$-algebra/forgetful-adjunction
$$\adjunct{\M}{\M^{\T}}{F^{\T}}{U^{\T}}$$ 
right-induces a combinatorial model category structure on the category $\M^{\T}$ of $\T$-algebras.
\end{rmk}

\begin{thm}\label{thm:comodalg} If $\kk$ is  a field, and $H$ is a differential graded $\kk$-Hopf algebra that is finite dimensional in each degree, then the adjunction $U\dashv (-\otimes H)$  left-induces a combinatorial model category structure on the category $\mathsf {Alg}_{*}^{H}$ of augmented right $H$-comodule algebras.
\end{thm}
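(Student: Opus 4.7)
The plan is to verify the hypotheses of Theorem~\ref{thm:mr} for the adjunction
\[
\adjunct{\mathsf{Alg}_*^H}{\mathsf{Alg}_*}{U}{-\otimes H},
\]
where $\mathsf{Alg}_*$ is equipped with the projective model structure right-induced from $\Ch$ along the free augmented algebra / forgetful adjunction.

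First I would endow $\mathsf{Alg}_*$ with a combinatorial model structure via Remark~\ref{rmk:ss+ar}: the projective model structure on $\Ch$ is combinatorial by \cite[3.7]{shipley-hz}, the free augmented $\kk$-algebra monad preserves filtered colimits (being the tensor-algebra construction), and the acyclicity hypothesis for right induction holds over a field, since free-algebra cell extensions of the generating acyclic cofibrations of $\Ch$ have underlying chain maps that remain acyclic cofibrations.

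Next I would verify that $\mathsf{Alg}_*^H$ is bicomplete and locally presentable. Cocompleteness is immediate from Remark~\ref{rmk:comonad-cocomplete}. Since filtered colimits in $\mathsf{Alg}_*$ are created by the forgetful functor to $\Ch$, on which $-\otimes H$ preserves all colimits, the comonad $-\otimes H$ is accessible; standard arguments (the comonadic counterpart of \cite[Theorem 2.78]{adamek-rosicky}) then exhibit $\mathsf{Alg}_*^H$ as a locally presentable category, which in particular implies completeness.

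The main obstacle is the acyclicity condition: one must show that every morphism $f\colon X \to Y$ of $H$-comodule algebras having the right lifting property against all morphisms whose underlying map in $\mathsf{Alg}_*$ is a cofibration satisfies $Uf \in \WE$. The strategy is to exploit the Hopf algebra structure on $H$. The antipode, together with the hypothesis that $H$ is finite-dimensional in each degree, equips the cofree adjunction with enough ``twisted'' symmetry to produce, for every generating cofibration $i$ of $\mathsf{Alg}_*$, a morphism of $H$-comodule algebras whose underlying map is closely related to $i$ and against which $f$ therefore admits a lift. A suitable chase using these lifts, the counit of the cofree adjunction, and the 2-out-of-3 property in $\mathsf{Alg}_*$ should identify $Uf$ as a quasi-isomorphism. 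This Hopf-algebraic analysis, which relies crucially on the finite-type hypothesis on $H$, is the technical heart of the proof; once it is in place, Theorem~\ref{thm:mr} delivers the desired combinatorial left-induced model structure on $\mathsf{Alg}_*^H$.
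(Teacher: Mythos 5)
Your overall architecture matches the paper's: reduce to Theorem \ref{thm:mr}, equip $\mathsf{Alg}_*$ with the combinatorial right-induced structure via Remark \ref{rmk:ss+ar} and \cite[Theorem 4.1]{schwede-shipley}, check bicompleteness of $\mathsf{Alg}_*^{H}$, and isolate the acyclicity condition $\rlp{(U^{-1}\Cof_{\text{Alg}})}\subseteq U^{-1}\WE_{\text{Alg}}$ as the crux. (Your route to completeness via accessibility of the comonad $-\otimes H$ is a legitimate alternative to the paper's appeal to \cite[Lemma 6.8]{hess-shipley}; note, though, that the paper uses the finite-type hypothesis on $H$ exactly there, to create limits of $\mathsf{Alg}_*^H$ in $\Ch$, and not in the acyclicity argument as you suggest.)

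The genuine gap is the acyclicity condition itself, which you assert rather than prove. The claim that the antipode supplies ``twisted symmetry'' producing, for each generating cofibration $i$ of $\mathsf{Alg}_*$, a comodule-algebra morphism ``closely related to $i$'' is not a construction, and it does not reflect how the argument can be made to work: a map in $\rlp{(U^{-1}\Cof_{\text{Alg}})}$ only lifts against morphisms that already live in $\mathsf{Alg}_*^H$, so one must manufacture specific such morphisms whose underlying algebra maps are cofibrations and whose lifts detect surjectivity and acyclicity. The paper does this in three concrete steps. First, it lifts the coaction $\rho$ on $A$ to an explicit coaction $\widetilde\rho$ on the cobar--bar resolution $\Om\Bar A$ (using the Alexander--Whitney quasi-isomorphism and Milgram's map), so that the counit $\ve_A\colon \Om\Bar A\to A$ is a map of $H$-comodule algebras; lifting against $\kk\to(\Om\Bar A,\widetilde\rho)$ shows every $p\in\rlp{(U^{-1}\Cof_{\text{Alg}})}$ is surjective. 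Second, it constructs a $\operatorname{Cone}$ endofunctor on free $H$-comodule algebras and lifts against the inclusion $(\Om\Bar A,\widetilde\rho)\hookrightarrow\operatorname{Cone}(\Om\Bar A,\widetilde\rho)$, concluding by 2-out-of-6 that any such $p$ with codomain $\kk$ is a quasi-isomorphism. Third, a general $p$ is pulled back along the unit $\kk\to A$; since limits in $\mathsf{Alg}_*^H$ are created in $\Ch$, the pullback is $\kk\oplus\ker p$, whence $\ker p$ is acyclic and $p$ is a quasi-isomorphism. None of this---in particular the lifted coaction $\widetilde\rho$, which is the technical heart---is present or implied by your sketch, and the antipode in fact plays no role in the paper's argument.
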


 \begin{proof}  As proved in  \cite[Theorem 4.1]{schwede-shipley}, the hypotheses of  Remark \ref{rmk:ss+ar} are satisfied by the free associative algebra monad $\T_{\text{Alg}}=(T, \mu, \eta)$ acting on  $(\Ch, \Fib, \Cof, \WE)$. There is therefore a combinatorial model category structure on the category of associative chain algebras, inducing a combinatorial model structure $(\Fib_{\text{Alg}}, \Cof_{\text{Alg}}, \WE_{\text{Alg}})$ on the category of augmented, associative chain algebras $\mathsf{Alg}_{*}$, which is also trivially fibrantly generated by $(\Fib_{\text{Alg}}, \Fib_{\text{Alg}}\cap \WE_{\text{Alg}})$.
 
Since limits in $\mathsf{Alg}_{*}$ are created in $\Ch$, the proof of \cite[Lemma 6.8]{hess-shipley} implies that all limits in $\mathsf {Alg}_{*}^{H}$  exist and are created in $\Ch $ as well, since $H$ is finite dimensional in each degree. By Remark \ref{rmk:comonad-cocomplete}, we deduce that $\mathsf {Alg}_{*}^{H}$ is bicomplete.

Theorem \ref{thm:mr} now implies that, in order to conclude, it suffices to show that
$$U\Big(\rlp {\big(\llp{((\Fib_{\text{Alg}}\cap \WE_{\text{Alg}})\otimes H)}\big)} \Big)\subseteq \WE_{\text{Alg}},$$
which is equivalent, by the usual adjunction argument, to proving that  
$$\rlp{(U^{-1}\Cof_{\text{Alg}})}\subseteq U^{-1}\WE_{\text{Alg}},$$
where the elements of $\WE_{\text{Alg}}$ are quasi-isomorphisms of associative chain algebras.

We begin by showing that every element of $\rlp{(U^{-1}\Cof_{\text{Alg}})}$ is a surjection.  Let $\Bar$ denote the reduced bar construction functor, which associates a connected, coassociative chain coalgebra to any object in $\mathsf {Alg}_{*}$, and let $\Om$ denote its adjoint, the reduced cobar construction functor \cite[\S 10.3]{neisendorfer}.  By \cite[Corollary 10.5.4]{neisendorfer}, every component
$$\ve_{A}:\Omega \Bar A \to A$$
of the counit of the adjunction is a quasi-isomorphism. We now prove that if $(A, \rho)$ is an $H$-comodule algebra, then the $H$-coaction lifts to a coaction $\widetilde \rho$ on $\Omega \Bar A$, so that $\ve_{A}$ is a morphism of $H$-comodule algebras.�

The coaction $\widetilde\rho$ is equal to the composite
$$\xymatrix{\Om\Bar A \ar [rr]^(0.43){\Om\Bar \rho}\ar[rrrdd]_{\widetilde \rho}&&\Om\Bar (A\otimes H)\ar [r]^(0.45){\vp_{AW}} &\Om (\Bar A \otimes \Bar H) \ar [d]^{q}\\
&&&\Om\Bar A \otimes \Om \Bar H \ar [d]^{\Om\Bar A \otimes \ve_{H}}\\
&&&\Om \Bar A \otimes H,}$$
where $\vp_{AW}$ is the natural quasi-isomorphism of chain algebras realizing the strongly homotopy coalgebra map structure of the Alexander-Whitney map $$\Bar (A\otimes H) \to \Bar A \otimes \Bar H$$ \cite[Theorem A.11] {hess:twisting}, and $q$ is Milgram's natural quasi-isomorphism of chain algebras \cite {milgram}.

Next we explicitly define $\widetilde\rho$ on elements.
For any $a\in A$, write $\rho(a)= a_{k}\otimes h^{k}(a)$, using the Einstein summation convention, i.e., we sum over any index that is both a subscript and a superscript, as $k$ is here.  Note that since $\rho $ is a morphism of algebras, for all $a_{1},.., a_{n}\in A$,
\begin{equation}\label{eqn:h-prod}
(a_{1}\cdots a_{n})_{k}\otimes h^{k}(a_{1}\cdots a_{n})=\pm (a_{1})_{k_{1}}\cdots (a_{n})_{k_{n}}\otimes h^{k_{1}}(a_{1})\cdots h^{k_{n}}(a_{1n}),
\end{equation}
where the sign is given by the Koszul rule, i.e., a sign $\sn {pq}$ is introduced every time an element of degree $p$ commutes past an element of degree $q$.  Moreover, since $\rho$ is a chain map,
\begin{equation}\label{eqn:h-d}
(da)_{k}\otimes h^{k}(da)=da_{k}\otimes h^{k}(a) +\sn {\deg a_{k}}a_{k}\otimes dh^{k}(a)
\end{equation}
for all $a\in A$, where $d$ denotes the differentials on both $A$ and $H$.

Let $s$ denote the endofunctor on graded vector spaces specified by $(sV)_{n}=V_{n-1}$, with inverse functor $\si$. Since the chain algebra $\Omega \Bar A$ is free as an algebra on $\si (\Bar A)_{>0}$, of which a typical element is $\si(sa_{1}|\cdots |sa_{n})$, where $a_{1},...,a_{n}\in A$, it suffices to define $\widetilde \rho$ on such elements, then to extend to a morphism of algebras.  

Define $\widetilde \rho: \Omega\Bar A \to \Omega \Bar A \otimes H$ by setting
$$\widetilde \rho \big(\si(sa_{1}|\cdots | sa_{n})\big)=\pm\si\big(s(a_{1})_{k_{1}}|\cdots |s(a_{n})_{k_{n}}\big)\otimes h^{k_{1}}(a_{1})\cdots h^{k_{n}}(a_{n}),$$
where this time the sign also takes into account that $s$ and $\si$ are operators of degrees $1$ and $-1$, respectively.
Since $(\rho\otimes H)\rho=(A\otimes \delta)\rho$, where $\delta$ is the comultiplication on $H$, it follows that $(\widetilde\rho\otimes H)\widetilde\rho=(A\otimes \delta)\widetilde\rho$.  It is easy to check that $\widetilde \rho$ is a chain map as well, using the formulas in \cite[\S 10.3]{neisendorfer} for the bar and cobar differentials and identities (\ref{eqn:h-prod}) and (\ref{eqn:h-d}).
Finally, since $\ve_{A}:\Om \Bar A \to A$ is specified by 
$$\ve_{A}\big(\si(sa_{1}|\cdots | sa_{n})\big)=\begin{cases} a_{1}&: n=1\\ 0&: n>1,\end{cases}$$
it is clear that $\ve_{A}$ is a morphism of $H$-comodule algebras, with respect to $\rho$ and $\widetilde \rho$.

Let $p: (A', \rho') \to (A, \rho)$ be an element of  $\rlp{(U^{-1}\Cof_{\text{Alg}})}$.  Consider the following commuting diagram
$$\xymatrix{\kk\ar[r]\ar [d]&(A',\rho')\ar [d]^{p}\\
(\Omega\Bar A, \widetilde \rho)\ar [r]^(0.6){\ve_{A}}\ar@{-->}[ur]^{\widetilde \ve} &(A,\rho),}$$
where $\kk$ is the initial and terminal object of $\mathsf{Alg}_{*}$.  The lefthand vertical arrow is an element of $U^{-1}(\Cof_{\text{Alg}})$, since $\Omega \Bar A$ is cofibrant in $\mathsf{Alg}_{*}$; note that it is important here that we are working over a field, so that the chain complexes  underlying $A$ and $\Bar A$ are cofibrant.  The dotted lift $\widetilde\ve$ therefore exists.  Because $\ve_{A} $ is surjective,  $p$ must be as well, since $p\widetilde\ve= \ve_{A}$.
\smallskip

We next show that if $p: (A, \rho) \to \kk$ is an element of $\rlp{(U^{-1}\Cof_{\text{Alg}})}$, then $p$ is a quasi-isomorphism. Let $ \mathsf {frAlg}_{*}^{H}$ denote the full subcategory of $ \mathsf {Alg}_{*}^{H}$, the objects of which are such that the underlying algebra is free associative. Define an endofunctor $\operatorname{Cone}$ on $ \mathsf {frAlg}_{*}^{H}$  as follows.  For every $(TV,d, \rho)\in \ob \mathsf {frAlg}_{*}^{H}$,
$$\operatorname{Cone}(TV,d, \rho)=\big(T(V\oplus sV),D, \widehat \rho\,\big),$$
where $D|_{V}=d$ and $D(sv)=-s(dv) + v$ for all $v\in V$, while the algebra morphism $\widehat \rho$ is specified on generators $v\in V$ by $\widehat \rho(v)=\rho(v)$ and 
$$\widehat \rho(sv)= \sum _{i}sv_{i1}|v_{i2}|\cdots |v_{in_{i}}\otimes h_{i},$$
if $\rho (v)=   \sum _{i}v_{i1}|v_{i2}|\cdots |v_{in_{i}}\otimes h_{i}$.  It is easy to check that  $(\widehat\rho\otimes H)\widehat\rho=(A\otimes \delta)\widehat\rho$, since $(\rho\otimes H)\rho=(A\otimes \delta)\rho$, and that $\widehat \rho$ is a chain map, since $\rho$ is a chain map.  Note that $(TV,d, \rho)$ is a sub $H$-comodule algebra of $\operatorname{Cone}(TV,d, \rho)$ and that the inclusion
$$\iota: (TV, d,\rho) \hookrightarrow \operatorname{Cone}(TV,d, \rho)$$
is always an element of $U^{-1}(\Cof_{\text{Alg}})$, since we are working over a field.  Observe moreover that the unique morphism $e:\operatorname{Cone}(TV,d, \rho)\to \kk$ is always a quasi-isomorphism. 

Consider the commuting diagram 
$$\xymatrix{(\Omega\Bar A, \widetilde \rho) \ar[r]^(0.6){\ve_{A}}\ar [d]_{\iota}&(A,\rho)\ar [d]^{p}\\
\operatorname{Cone}(\Omega\Bar A, \widetilde \rho)\ar [r]^(0.7){e}\ar@{-->}[ur]^{\widehat e} &\kk.}$$
 Since $\iota \in U^{-1}(\Cof_{\text{Alg}})$, the dotted lift $\widehat e$ exists.  Because $\widehat e \iota =\ve_{A}$ and $p\widehat e= e$, both of which are quasi-isomorphisms, it follows by ``2-out-of-6''  that $\iota$, $\widehat e$ and, in particular, $p$ are all quasi-isomorphisms.

We  are finally ready to prove that every element of $\rlp{(U^{-1}\Cof_{\text{Alg}})}$ is in fact a quasi-isomorphism.  Let $p: (A', \rho') \to (A, \rho)$ be an element of $\rlp{(U^{-1}\Cof_{\text{Alg}})}$, and take the pullback in $\mathsf {Alg}_{*}^{H}$
$$\xymatrix{(A'', \rho'') \ar[r]\ar [d]_{\bar p}\ar@{}[dr]|(.2){\lrcorner}&(A',\rho')\ar [d]^{p}\\
\kk\ar [r]^{\eta} &(A,\rho),}$$ 
where $\eta$ is the unit of the algebra $A$. Observe that $\bar p\in \rlp{(U^{-1}\Cof_{\text{Alg}})}$, since any class of morphisms characterized by a right lifting property is closed under pullback.  By the previous step in the proof, $\bar p$ is a quasi-isomorphism. Moreover, $A''\cong \kk \oplus \ker p$, as chain complexes, because limits in $\mathsf {Alg}_{*}^{H}$ are created in $\Ch$. 
It follows that $\ker p $ is acyclic, and thus $p$ is a quasi-isomorphism, since there is a short exact sequence of chain complexes
\[0\to \ker p\hookrightarrow A'\xrightarrow p A\to 0.\qedhere\]  
\end{proof}

\begin{rmk} We will show in a forthcoming article that Theorem \ref{thm:comodalg} actually holds over any commutative ring. Our proof depends on a generalization of Theorem \ref{thm:mr} to the context of enriched cofibrant generation and on working with relative model structures.
\end{rmk}

\section{Injective model structures}

Let $(\M, \Fib, \Cof, \WE)$ be a model category, and let $\D$ be a small category.  Recall that the \emph{projective model structure} on the diagram category $\M^{\D}$, if it exists, is a model structure in which the fibrations and weak equivalences are determined objectwise, i.e., $\tau\colon\Phi\to \Psi$ is a fibration (respectively, weak equivalence) if and only if $\tau_{d}\colon\Phi (d) \to \Psi(d)$ is a fibration (respectively, weak equivalence) in $\M$ for every $d\in \ob \D$.  Dually,  the \emph{injective model structure} on  $\M^{\D}$, if it exists, is a model structure in which the cofibrations and weak equivalences are determined objectwise.  An (acyclic) fibration in the injective model category structure is called an \emph{injective (acyclic) fibration}.

{If $\M$ is cofibrantly generated by a pair of sets of maps that permit the small object argument,} then $\M^\D$ admits a projective model structure for any small category $\D$. The proof constructs an explicit cellular presentation for the projective model structure. In this section, we apply the notions of Postnikov presentation and fibrant generation to studying the following problem.

\begin{problem}\label{prob} Let $\D$ be a small category, and let $(\M, \Fib, \Cof, \WE)$ be a model category. When does $\M^{\D}$ admit the injective model structure?
\end{problem}

There are already well-known existence results for injective model structures in the literature.
Most significantly, {as was known to Jeff Smith,}  if $\M$ is a combinatorial model category, then $\M^{\D}$ admits the injective model structure, for all small categories $\D$; see \cite {beke}, \cite[Proposition A.2.8.2]{lurie}. Moreover, Bergner and Rezk showed in \cite[Proposition 3.15]{bergner-rezk} that if $\D$ is an elegant Reedy category, and $\M$ is the category of simplicial presheaves on some other small category, {known to admit an injective model category structure by \cite{jardine}, then the injective model structure on  $\M^{\D^{\op}}$ coincides with the Reedy model structure.}  Finally, if $\D$ is an inverse category, then $\M^{\D}$ always admits the injective model structure; see for example \cite[Theorem 5.1.3]{hovey}. Again, the injective model structure coincides with the Reedy model structure in this case.

We show here that Postnikov presentations and fibrant generation provide us with new, useful tools for proving the existence of injective model structures, which, in turn, also admit Postnikov presentations or are fibrantly generated.

\subsection {Diagrams as algebras}\label{sec:alg}

As a warm up to constructing and studying injective model structures on diagram categories, we take a new look at one existence proof for projective model structures.

Let $\C$ be any bicomplete category, and let $\D$ be a small category, with discrete subcategory $\D_{\delta}$. Let $\iota_{\D}: \D_{\delta}\hookrightarrow \D$ denote the inclusion functor.  Let
$$\lan_{\iota_{\D}}\colon\C^{\D_{\delta}} \to \C^{\D}$$ 
denote the left adjoint of the \emph {restriction functor} 
$$\iota_{\D}^{*}: \C^{\D} \to \C^{\D_{\delta}},$$ 
which is given by precomposing with $\iota_{\D}$.
The image under $\lan_{\iota_{\D}}$ of any functor $\Phi : \D_{\delta }\to \C$ is its left Kan extension along $\iota_{\D}$, which is the same as its associated \emph{copower construction}, i.e.,
$$\lan_{\iota_{\D}} (\Phi)(d)=\coprod _{d'\in \ob \D}\coprod _{f\colon d'\to d}\Phi (d').$$

The next result, certainly well known to category theorists, is quite useful for studying model structures on diagram categories.

\begin{lem}\label{lem:diag-alg} Let $\T_{\D}$ denote the monad associated to the adjunction 
$$\adjunct {\C^{\D_{\delta}}}{\C^{\D}}{\lan_{\iota_{\D}}}{\iota_{\D}^{*}} .$$  
The category $\big(\C^{\D_{\delta}})^{\T_{\D}}$ of $\T_{\D}$-algebras in $\C^{\D_{\delta}}$ is isomorphic to $\C^{\D}$.
\end{lem}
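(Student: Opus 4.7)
The plan is to directly construct an isomorphism by unpacking the monad $\T_{\D}$ and showing that $\T_{\D}$-algebra structures on an object of $\C^{\D_{\delta}}$ are the same data as functorial extensions to $\D$.

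First I would describe the monad $\T_{\D}=(T_{\D}, \mu, \eta)$ explicitly.  On an object-indexed family $\Phi=\{\Phi(d)\mid d\in \ob\D\}$ in $\C^{\D_{\delta}}$,
$$T_{\D}\Phi(d)=\coprod_{d'\in \ob\D}\coprod_{f\colon d'\to d}\Phi(d'),$$
the unit $\eta_{\Phi}\colon \Phi\to T_{\D}\Phi$ at $d$ is the coproduct inclusion indexed by $\id_{d}\colon d\to d$, and the multiplication $\mu_{\Phi}\colon T_{\D}^{2}\Phi \to T_{\D}\Phi$ at $d$ sends the summand indexed by a composable pair $d''\xrightarrow{f}d'\xrightarrow{g}d$ of the iterated coproduct to the summand indexed by the composite $gf\colon d''\to d$ via the identity on $\Phi(d'')$.

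Next I would unpack $\T_{\D}$-algebra data.  By the universal property of coproducts, a morphism $\alpha\colon T_{\D}\Phi\to \Phi$ in $\C^{\D_{\delta}}$ is precisely a family of morphisms $\alpha_{f}\colon \Phi(d')\to \Phi(d)$ indexed by the morphisms $f\colon d'\to d$ of $\D$.  The unit axiom $\alpha\circ \eta_{\Phi}=\id_{\Phi}$ translates into $\alpha_{\id_{d}}=\id_{\Phi(d)}$ for every $d\in \ob\D$, and the associativity axiom $\alpha\circ T_{\D}\alpha=\alpha\circ \mu_{\Phi}$ translates into $\alpha_{g}\circ \alpha_{f}=\alpha_{gf}$ for every composable pair $f,g$ in $\D$.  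Hence a $\T_{\D}$-algebra structure on $\Phi$ is exactly the data of an extension of the object-function $d\mapsto \Phi(d)$ to a functor $\D\to \C$.  Running this correspondence in both directions gives mutually inverse assignments on objects.

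To finish, I would observe that a morphism of $\T_{\D}$-algebras $(\Phi,\alpha)\to (\Psi,\beta)$ is a morphism $\tau\colon \Phi\to \Psi$ in $\C^{\D_{\delta}}$ (i.e.~an $\ob\D$-indexed family of morphisms in $\C$) such that $\tau\circ \alpha=\beta\circ T_{\D}\tau$; testing this equation on each summand indexed by $f\colon d'\to d$ recovers exactly the naturality square $\tau_{d}\circ \alpha_{f}=\beta_{f}\circ \tau_{d'}$.  Thus the bijection of objects extends to a bijection of hom-sets that is manifestly functorial, yielding the desired isomorphism $(\C^{\D_{\delta}})^{\T_{\D}}\cong \C^{\D}$.

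The content is essentially formal; the only real work is the bookkeeping in the translation between the coproduct-indexed monad axioms and the pointwise functoriality and naturality conditions.  I expect no serious obstacle, as each monad axiom matches a functoriality axiom term-by-term once the summands are identified with the morphisms of $\D$ that index them.
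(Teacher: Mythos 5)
Your proof is correct and is exactly the ``by inspection'' argument that the paper's one-sentence proof invokes without writing out: the paper simply states that the isomorphism follows by inspection or from Beck's monadicity theorem, and you have supplied the inspection in full, correctly identifying algebra structures with functorial extensions and algebra morphisms with natural transformations.
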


\begin{proof}
The isomorphism can be proved by inspection or by application of Beck's monadicity theorem.
\end{proof}

To illustrate the utility of this algebraic description of diagram categories and to motivate considering the coalgebraic description below, we now show that \cite[Lemma 2.3]{schwede-shipley} and Lemma \ref{lem:diag-alg} together imply easily the well-known result that every category of diagrams {taking values in a cofibrantly generated model category that permits the small object argument admits the projective model structure, which is also cofibrantly generated, has a cellular presentation, and permits the small object argument} \cite[Theorem 11.6.1]{hirschhorn}. 

\begin{defn}\label{defn:tensorclass} Let $\C$ be a cocomplete category and $\D$ a small category. The copower induces a bifunctor $-\otimes - \colon \C \times \Set^\D \to \C^\D$ defined by 
\[ X \otimes \D(d,-) := \coprod_{\D(d,-)} X,\] for any $X \in \ob\C$ and $d \in \ob \D$. For any class $\X$ of morphisms in $\C$ and any small category $\D$, let 
$$\X \otimes \mathsf D = \{ f\otimes \mathsf D(d,-)\mid f\in \X, d\in \ob \D\}.$$
\end{defn}

\begin{rmk}  Note that 
$$\X \otimes \mathsf D = \lan_{\iota_{\D}} (\X \otimes \mathsf D_{\delta}).$$
\end{rmk}

\begin{prop}\label{prop:hirschhorn} Let $\mathsf D$ be a small category. If $\mathsf M$ is a cofibrantly generated model category with  a set $\I$ of generating cofibrations  and a set $\J$ of generating acyclic cofibrations that permit the small object argument, then $\M^{\D}$ admits the projective model structure, which has a cellular presentation given by the sets $\I\otimes \D$ of generating cofibrations and $\J\otimes \D$ of generating acyclic cofibrations. 
\end{prop}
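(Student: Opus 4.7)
The plan is to right-induce the projective model structure along the adjunction $\lan_{\iota_\D}\dashv\iota_\D^*$ of Lemma~\ref{lem:diag-alg}, starting from the evident product model structure on $\M^{\D_\delta}$, by means of the Schwede--Shipley transfer result \cite[Lemma~2.3]{schwede-shipley}. First I would observe that, since $\I$ and $\J$ cofibrantly generate $\M$ and permit the small object argument there, the sets $\I\otimes\D_\delta$ and $\J\otimes\D_\delta$ cofibrantly generate the product model structure on $\M^{\D_\delta}\cong\prod_{d\in\ob\D}\M$ and also permit the small object argument there, as both lifting and smallness in a product of categories can be verified coordinatewise.

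Next, via the monadic identification $\M^\D\cong(\M^{\D_\delta})^{\T_\D}$ of Lemma~\ref{lem:diag-alg} and the direct computation $\lan_{\iota_\D}(X\otimes\D_\delta(d,-))=X\otimes\D(d,-)$ from the copower formula for $\lan_{\iota_\D}$, the free-algebra functor sends $\I\otimes\D_\delta$ to $\I\otimes\D$ and $\J\otimes\D_\delta$ to $\J\otimes\D$. Applying \cite[Lemma~2.3]{schwede-shipley} to this adjunction would then produce a cofibrantly generated model structure on $\M^\D$ with these sets as generating (acyclic) cofibrations, in which the fibrations and weak equivalences are those maps that $\iota_\D^*$ sends to the corresponding classes in $\M^{\D_\delta}$; but $\iota_\D^*$ is simply evaluation on the objects of $\D$, so this right-induced structure is precisely the projective one. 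Proposition~\ref{prop:cofibgen-cell} then upgrades cofibrant generation to the claimed cellular presentation.

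The main obstacle is verifying the hypotheses of \cite[Lemma~2.3]{schwede-shipley}: (a) the smallness of the domains of maps in $\I\otimes\D$ and $\J\otimes\D$ relative to the appropriate classes in $\M^\D$, and (b) the acyclicity condition that every relative $(\J\otimes\D)$-cell complex is an objectwise weak equivalence. Condition (a) reduces to the smallness hypotheses on $\I$ and $\J$ in $\M$, since colimits in $\M^\D$ are computed objectwise and the value of $X\otimes\D(d,-)$ at any object is a coproduct of copies of $X$. Condition (b) is the heart of the argument: a pushout in $\M^\D$ of $j\otimes\D(d,-)$ with $j\in\J$ restricts at each $d'\in\ob\D$ to a pushout of $\coprod_{\D(d,d')} j$, and since the class of acyclic cofibrations in $\M$ is closed under arbitrary coproducts (being $\llp{\Fib}$), under pushout, and under transfinite composition, every relative $(\J\otimes\D)$-cell complex is an objectwise acyclic cofibration, hence an objectwise weak equivalence.
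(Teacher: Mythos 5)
Your proposal is correct and follows essentially the same route as the paper: both pass through the monadic identification $\M^\D\cong(\M^{\D_\delta})^{\T_\D}$ of Lemma \ref{lem:diag-alg}, apply \cite[Lemma 2.3]{schwede-shipley}, reduce smallness to that of $\I$ and $\J$, and verify acyclicity by evaluating a pushout of $j\otimes\D(d,-)$ at each object to get a pushout of a coproduct of acyclic cofibrations, before invoking Proposition \ref{prop:cofibgen-cell} for the cellular presentation. The only detail worth adding explicitly is that the underlying functor $\iota_\D^*\circ\lan_{\iota_\D}$ of the monad preserves all colimits (both factors being left adjoints), which the cited lemma needs.
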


\begin{proof} The proof consists of a straightforward application of \cite[Lemma 2.3]{schwede-shipley}.  The underlying functor of the monad $\T_{\D}$ is $\iota_{\D}^{*}\circ \lan_{\iota_{\D}}$, which commutes with all colimits since both $\lan_{\iota_{\D}}$ and $\iota_{\D}^*$ are left adjoints.
The smallness condition on $\I\otimes \D$ and  $\J\otimes \D$ follows from the smallness condition on $\I$ and $\J$.

To conclude we need only to show that every relative  $(\J\otimes \D)$-cell complex (called a regular $(\J\otimes \D_{\delta})_{\T_{\D}}$-cofibration in  \cite{schwede-shipley}) is an objectwise weak equivalence, which is a consequence of the following observation.

Let $\tau\colon  \Phi\to \Psi$ be any objectwise acyclic cofibration in $\M^{\D_{\delta}}$. For every morphism $\sigma\colon \lan_{\iota_{\D}} (\Phi) \to \Omega$ in $\M^{\D_{}}$, the morphism 
$$\widehat\tau\colon  \Omega\to \Omega \coprod_{\lan(\Phi)}\lan_{\iota_{\D}} (\Psi)$$
induced by pushing $\lan_{\iota_{\D}}(\tau)$ along $\sigma$ is also an objectwise acyclic cofibration.  To check this, evaluate the pushout of functors at an object $d$ of $\D$, obtaining a pushout diagram
$$\xymatrix{
\coprod_{d'\in \ob \D}\coprod_{f\colon d'\to d} \Phi (d')\ar [d]_{\coprod_{d'\in \ob \D}\coprod_{f\colon d'\to d}\tau_{d'}}\ar [r]^-{\sigma_{d}}\ar@{}[dr]|(.8){\ulcorner}&\Omega (d)\ar[d]^{\widehat \tau_{d}}\\
\coprod_{d'\in \ob \D}\coprod_{f\colon d'\to d} \Phi (d')\ar[r]^-{\widehat\sigma_{d}}& \big(\Omega \coprod_{\lan_{\iota_{\D}}(\Phi)}\lan (\Psi)\big)(d)
}$$
in $\M$. The left-hand vertical arrow is an acyclic cofibration, since it is a coproduct of acyclic cofibrations.  It follows that $\widehat\tau_{d}$ is also an acyclic cofibration, as acyclic cofibrations are preserved under pushout. Composites of objectwise acyclic cofibrations in $\M^\D$ are clearly objectwise acyclic cofibrations, so we conclude that every relative $(\J\otimes\D)$-cell complex is an objectwise weak equivalence.

{Finally, it is clear by adjunction that $(\I \otimes\D)^\boxslash$ and $(\J\otimes\D)^\boxslash$ are the classes of objectwise acyclic fibrations and objectwise fibrations in $\M^\D$. By Proposition \ref{prop:cofibgen-cell}, the pair $(\I \otimes\D, \J\otimes\D)$ defines a cellular presentation for the projective model structure.}
\end{proof}

The adjunction of Lemma \ref{lem:diag-alg} and construction of Definition \ref{defn:tensorclass} enable us to show that injective (acyclic) fibrations are objectwise (acyclic) fibrations. This result will surprise no one familiar with injective model structures, but proofs in the literature (e.g.,  \cite[Remark A.2.8.5]{lurie})  frequently assume more restrictive hypotheses on the model category $\M$. 

\begin{lem}\label{lem:inj-fib-obj} Let $(\M,\Fib,\Cof, \WE)$ be a model category, and let $\D$ be a small category. Injective (acyclic) fibrations in $\M^\D$ are objectwise (acyclic) fibrations.
\end{lem}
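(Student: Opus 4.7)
The plan is to exploit the fact that for each object $d\in\ob\D$, the evaluation functor $\mathrm{ev}_d\colon \M^\D\to \M$ has a left adjoint, namely the copower functor $-\otimes \D(d,-)\colon \M\to\M^\D$ of Definition \ref{defn:tensorclass}. Indeed, evaluation at $d$ is precomposition with the inclusion of the full subcategory on $\{d\}$, and the corresponding left Kan extension formula gives precisely $(X\otimes \D(d,-))(d')=\coprod_{\D(d,d')}X$. Translating fibration conditions across this adjunction is what will let us pass from objectwise lifting information to global lifting information.

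The heart of the argument is the observation that the copower functor $-\otimes \D(d,-)$ carries (acyclic) cofibrations in $\M$ to injective (acyclic) cofibrations in $\M^\D$. Given a morphism $f\colon X\to Y$ in $\M$, the map $f\otimes \D(d,-)$ is evaluated at $d'\in\ob\D$ as the coproduct $\coprod_{\D(d,d')}f$. Since coproducts of (acyclic) cofibrations in any model category are again (acyclic) cofibrations, this evaluation is an (acyclic) cofibration in $\M$ for every $d'$, so $f\otimes \D(d,-)$ is objectwise an (acyclic) cofibration, hence by definition is an injective (acyclic) cofibration in $\M^\D$.

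To conclude, let $p\colon \Phi\to\Psi$ be an injective fibration in $\M^\D$, and let $j\colon X\to Y$ be an acyclic cofibration in $\M$. By the preceding step, $j\otimes \D(d,-)$ is an injective acyclic cofibration, so $p\boxslash (j\otimes\D(d,-))$. The adjunction $-\otimes\D(d,-)\dashv \mathrm{ev}_d$ then yields $j\boxslash p_d$, so that $p_d\colon\Phi(d)\to\Psi(d)$ has the right lifting property with respect to every acyclic cofibration in $\M$, i.e., $p_d$ is a fibration. The argument for injective acyclic fibrations is identical, replacing acyclic cofibrations by cofibrations throughout.

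There is no real obstacle here beyond recording the copower--evaluation adjunction and invoking closure of (acyclic) cofibrations under coproducts; the advertised improvement over the formulations in the literature is simply that no hypothesis on $\M$ beyond being a model category is required, because the argument uses only the model-categorical axioms satisfied by $\M$ itself together with the definition of the injective classes in $\M^\D$.
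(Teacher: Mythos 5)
Your proof is correct and is essentially the paper's own argument: the paper likewise transposes the lifting problem across the adjunction $-\otimes\D(d,-)\dashv \mathrm{ev}_d$ and observes that the components of $i\otimes\D(d,-)$ are coproducts of $i$, hence objectwise (acyclic) cofibrations. The only difference is cosmetic — you spell out the adjunction and the closure of the left class under coproducts a bit more explicitly.
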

\begin{proof}
Let $\tau \colon \Phi \to \Psi$ be an injective fibration, and let $\tau_d$ denote its component at $d \in \ob\D$. We must show that $i \boxslash \tau_d$ for any acyclic cofibration $i$ in $\M$. By adjunction, this is the case if and only if $i \otimes \D(d,-) \boxslash \tau$. The map $i \otimes \D(d,-)$ is an objectwise acyclic cofibration by inspection: its components are coproducts of the map $i$. Hence, this lifting problem has a solution, and we conclude.  For injective acyclic fibrations, the proof is similar, but one considers instead lifts against any cofibration $i$; alternatively, observe that injective acyclic fibrations are both injective fibrations and objectwise weak equivalences.
\end{proof}

\subsection{Diagrams as coalgebras}

The category-theoretic framework in the previous section dualizes in an obvious manner.  Let
$$\ran_{\iota_{\D}}\colon\C^{\D_{\delta}} \to \C^{\D}$$ 
denote the right adjoint of $\iota_{\D}^{*}\colon \C^{\D} \to \C^{\D_{\delta}}.$  
The image under $\ran_{\iota_{\D}}$ of any functor $\Phi \colon \D_{\delta }\to \C$ is its right Kan extension along $\iota_{\D}$, which is the same as its associated \emph{power construction},  i.e.,
$$\ran_{\iota_{\D}} (\Phi)(d)=\prod _{d'\in \ob \D}\prod _{f\colon d\to d'}\Phi (d').$$
Observe moreover that for any morphism $g\colon d\to e$ in $\D$, the induced morphism in $\C$
$$\ran_{\iota_{\D}}(\Phi)(g)\colon\ran_{\iota_{\D}} (\Phi)(d)\to \ran_{\iota_{\D}} (\Phi)(e)$$
is defined by specifying that for every $d'\in \ob \D$ and every $f\colon e\to d'$ the composite
$$\ran_{\iota_{\D}} (\Phi)(d)\xrightarrow {\ran_{\iota_{\D}}(\Phi)(g)}\ran_{\iota_{\D}}(\Phi)(e)\xrightarrow {\operatorname{proj}_{f\colon e\to d'}} \Phi (d')$$
is equal to the projection
$$\ran_{\iota_{\D}} (\Phi)(d)\xrightarrow{\operatorname{proj}_{fg\colon d\to d'}} \Phi (d').$$

\begin{rmk}\label{rmk:eta-monic}
Note moreover that for any $d\in \ob \D$ and any $\Phi\colon  \D \to \C$, the $d$-component of the $\Phi$-component of the unit map $\eta$ of the $\iota_{\D}^{*}\dashv \ran _{\iota_{\D}}$ adjunction is given by
$$(\eta_{\Phi})_{d}=\big( \Phi(f)\big)_{d', f\colon d\to d'}\colon  \Phi (d)\to \prod _{d'\in \ob \D}\prod _{f\colon d\to d'}\Phi (d'),$$
i.e., $\operatorname{proj}_{f\colon d\to d'}\circ (\eta_{\Phi})_{d}=\Phi (f)\colon \Phi (d)\to \Phi (d').$
In particular, $\eta_{\Phi}$ is an objectwise monomorphism for all $\Phi\colon  \D \to \C$. 
\end{rmk}

Lemma \ref{lem:diag-alg} dualizes to the following result.

\begin{lem}\label{lem:diag-coalg} Let $\K_{\D}$ denote the comonad associated to the adjunction 
$$\adjunct {\C^{\D}}{\C^{\D_{\delta}}}{\iota_{\D}^{*}}{\ran_{\iota_{\D}}} .$$  
The category $\big(\C^{\D_{\delta}})_{\K_{\D}}$ of $\K_{\D}$-coalgebras in $\C^{\D_{\delta}}$ is isomorphic to $\C^{\D}$.
\end{lem}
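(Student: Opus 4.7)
The plan is to dualize the proof of Lemma \ref{lem:diag-alg} in the obvious way, and two natural approaches are available: direct unpacking of the coalgebra axioms, or invocation of the comonadic form of Beck's theorem. I would present the inspection argument as the main proof and simply note that comonadicity gives an alternative.

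For the inspection approach, I would unpack the structure of a $\K_\D$-coalgebra. Such an object consists of a family $\{\Phi(d)\}_{d\in\ob\D}$ in $\C^{\D_\delta}$ together with a coaction $\rho\colon\Phi\to\K_\D\Phi=\iota_\D^*\ran_{\iota_\D}\Phi$. Evaluated at $d\in\ob\D$, the coaction is a morphism
\[
\rho_d\colon\Phi(d)\longrightarrow\prod_{d'\in\ob\D}\prod_{f\colon d\to d'}\Phi(d'),
\]
which, by the universal property of the product, is the same datum as a family of morphisms $\{\Phi(f)\colon\Phi(d)\to\Phi(d')\}_{f\colon d\to d'}$ in $\C$. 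The counit axiom for a coalgebra, together with the formula $(\varepsilon_\Phi)_d=\operatorname{proj}_{\Id_d}$ for the counit of $\K_\D$, forces $\Phi(\Id_d)=\Id_{\Phi(d)}$. Similarly, reading off the comonad comultiplication from the explicit formula for $\ran_{\iota_\D}$ given just above the statement of the lemma (specifically the description of $\ran_{\iota_\D}(\Phi)(g)$ in terms of projections indexed by composable pairs), the coassociativity axiom $(\K_\D\rho)\circ\rho=(\Delta_\Phi)\circ\rho$ translates precisely into the relation $\Phi(gf)=\Phi(g)\circ\Phi(f)$ for every composable pair. Thus a $\K_\D$-coalgebra structure on $\Phi$ is the same datum as a functor $\D\to\C$ extending the given assignment on objects. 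A parallel but simpler check identifies morphisms of $\K_\D$-coalgebras with natural transformations, yielding the claimed isomorphism of categories.

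Alternatively, one may apply the dual of Beck's monadicity theorem to the left adjoint $\iota_\D^*\colon\C^\D\to\C^{\D_\delta}$. One must verify that $\iota_\D^*$ reflects isomorphisms, which is immediate since a natural transformation is invertible precisely when each of its components is, and that $\C^\D$ admits, and $\iota_\D^*$ preserves, equalizers of $\iota_\D^*$-split pairs. Since $\C$ is bicomplete, equalizers in $\C^\D$ exist and are computed pointwise, so $\iota_\D^*$ trivially preserves them. Hence $\iota_\D^*$ is comonadic, which gives the desired equivalence $\C^\D\simeq(\C^{\D_\delta})_{\K_\D}$, and one checks that this equivalence is in fact an isomorphism of categories.

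I do not foresee a substantive obstacle: the content is really a matter of bookkeeping, translating the abstract coalgebra axioms through the explicit formula for $\ran_{\iota_\D}$ recalled in the text. The only point demanding minor care is matching the explicit description of the comonad comultiplication (as a product of projections indexed by composable pairs of arrows) with the functoriality relation $\Phi(gf)=\Phi(g)\Phi(f)$; once this matching is spelled out, the rest is formal.
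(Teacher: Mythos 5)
Your proposal is correct and follows exactly the route the paper takes: the paper proves the dual Lemma \ref{lem:diag-alg} ``by inspection or by application of Beck's monadicity theorem'' and obtains this lemma by dualization, which is precisely the pair of arguments you spell out. Your unpacking of the counit and coassociativity axioms into unitality and functoriality of $\Phi$ is a correct elaboration of the ``inspection'' argument that the paper leaves implicit.
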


\begin{rmk} Suppose that $(\M, \Fib, \Cof, \WE)$ is a model category and that $\D$ is a small category.  If there exists a model structure on $\M^{\D}$ left-induced by the adjunction $\iota_{D}^{*}\dashv \ran_{\iota_{\D}}$ of the lemma above, then it is necessarily the injective model structure.
\end{rmk}

To prove the existence of injective model structures on diagram categories, one could attempt to exploit this comonadic description of diagram categories to prove a result dual to Proposition \ref{prop:hirschhorn}.   To do so, we begin by dualizing the construction in Definition \ref{defn:tensorclass}.

\begin{defn}\label{defn:pitchfork} 
Let $\C$ be a complete category and $\D$ a small category. The power induces a bifunctor $-\pitchfork - \colon \C \times (\Set^{\D^\op})^\op \to \C^\D$ defined by 
\[ X \pitchfork \D(-,d) := \prod_{\D(-,d)} X,\] for any $X \in \ob\C$ and $d \in \ob \D$. For any class $\X$ of morphisms in $\C$ and any small category $\D$, let 
$$\X \pitchfork \mathsf D = \{p\pitchfork \mathsf D(-,d)\mid p\in \X, d\in \ob \D\}.$$
\end{defn}

\begin{rmk}\label{rmk:pitchfork-equality}
Note that  $$\X \pitchfork \mathsf D=\ran_{\iota_{\D}}\big(\X\pitchfork \D_{\delta}\big).$$
 \end{rmk}
 
\begin{rmk}\label{rmk:XtimesD} A typical element of $\X\pitchfork \D$ corresponding to $p \colon E \to B$ in $\X$ and $d \in \ob\D$ is a natural transformation
\[ p \pitchfork \D(-,d) \colon E \pitchfork \D(-,d) \to B \pitchfork \D(-,d)\] whose component at $d' \in \ob\D$ is 
 $$\prod _{\D(d',d)} p \colon\prod_{\D(d',d)} E \to \prod_{\D(d',d)} B.$$
 \end{rmk}

 \begin{rmk}It is easy to see that  if $\X$ fibrantly generates (respectively, is a Postnikov presentation for) a weak factorization system $(\cL, \cR)$, then $\X\pitchfork \D_{\delta}$ fibrantly generates (respectively, is a Postnikov presentation for) the induced objectwise weak factorization system on $\C^{\D_{\delta}}\cong \prod_{d\in \ob \D}\C$.  
 \end{rmk}
 
 The remark above implies that the next result is  a special case of  Lemma \ref{lem:left-fibgen}.

\begin{lem}\label{lem:fibgen-injective} Let $(\M, \Fib, \Cof, \WE)$ be a model category fibrantly generated by $(\X, \Z)$, and let $\D$ be a small category. If $\M^{\D}$ admits the injective model structure, then it is fibrantly generated by $(\X\pitchfork \D, \Z\pitchfork \D)$.
\end{lem}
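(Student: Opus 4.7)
The plan is to observe that this lemma is an immediate specialization of Lemma \ref{lem:left-fibgen} once we identify the correct adjunction and the correct fibrant generators upstairs. As remarked just after Lemma \ref{lem:diag-coalg}, if the injective model structure on $\M^{\D}$ exists, it is by definition left-induced along the adjunction
$$\adjunct {\M^{\D}}{\M^{\D_{\delta}}}{\iota_{\D}^{*}}{\ran_{\iota_{\D}}}$$
from the objectwise model structure on $\M^{\D_\delta}$, because the cofibrations and weak equivalences of the left-induced structure are precisely those morphisms that $\iota_{\D}^*$ sends to objectwise cofibrations and weak equivalences in $\M^{\D_\delta}\cong \prod_{d\in \ob \D}\M$.

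The next step is to verify that the objectwise model structure on $\M^{\D_\delta}$ is fibrantly generated by $(\X\pitchfork \D_\delta, \Z\pitchfork \D_\delta)$. This is the content of the unlabeled remark immediately preceding the lemma: a typical element of $\X\pitchfork \D_\delta$ at $d\in \ob \D$ is a morphism concentrated at $d$ whose component there is an element of $\X$ (so that the product-indexing is trivial because $\D_\delta$ has only identity morphisms), so the characterization of (acyclic) cofibrations in $\M^{\D_\delta}$ by left lifting against $(\X\pitchfork\D_\delta, \Z\pitchfork\D_\delta)$ reduces coordinatewise to the corresponding statement for $(\X, \Z)$ in $\M$.

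Now apply Lemma \ref{lem:left-fibgen} with $L=\iota_{\D}^*$ and $R=\ran_{\iota_{\D}}$. The conclusion is that $\M^\D$, equipped with the injective model structure, is fibrantly generated by the pair
$$\bigl(\ran_{\iota_{\D}}(\X\pitchfork \D_\delta),\ \ran_{\iota_{\D}}(\Z\pitchfork \D_\delta)\bigr).$$
By Remark \ref{rmk:pitchfork-equality}, these two classes coincide with $\X\pitchfork \D$ and $\Z\pitchfork \D$ respectively, which finishes the proof. There is no real obstacle here: every step is a formal unpacking of previously established adjunction/identification results, so the proof is essentially a two-line chase.
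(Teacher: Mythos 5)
Your proof is correct and follows exactly the route the paper takes: the paper's entire argument is the remark that $(\X\pitchfork\D_\delta,\Z\pitchfork\D_\delta)$ fibrantly generates the objectwise structure on $\M^{\D_\delta}$, followed by an application of Lemma \ref{lem:left-fibgen} to the adjunction $\iota_\D^*\dashv\ran_{\iota_\D}$ and the identification of Remark \ref{rmk:pitchfork-equality}. You have simply spelled out the steps the paper leaves implicit.
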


We do not have a general proof that if a model category $(\M, \Fib, \Cof, \WE)$ has a Postnikov presentation $(\X, \Z)$, and the injective model category structure on $\M^{\D}$ exists, then $(\X\pitchfork \D, \Z\pitchfork \D)$ is a Postnikov presentation of the injective model category structure.  {However the injective model structures covered by the existence results of Section \ref{sec:existence-injective} do have induced Postnikov presentations.}

\begin{lem}\label{lem:post-objectwise} Let $\D$ be a small category.  If $(\M,\Fib, \Cof, \WE)$ is a model category with Postnikov presentation $(\X, \Z)$, then every element of  $\post_{\X \pitchfork \D}$ (respectively, $\post_{\Z \pitchfork \D}$) is an objectwise fibration (respectively, objectwise acyclic fibration).
\end{lem}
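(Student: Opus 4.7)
The plan is to combine two ingredients: (i) the closure properties of $\post_{\Y}$ for any class $\Y$ recorded in Remark \ref{rmk:postx-closed}, and (ii) the fact that, because limits in $\M^{\D}$ are computed objectwise, the classes of objectwise fibrations and of objectwise acyclic fibrations in $\M^{\D}$ are closed under composition, pullback, and inverse limits indexed by ordinals. Given these, it suffices to show that every element of $\X \pitchfork \D$ is an objectwise fibration and every element of $\Z \pitchfork \D$ is an objectwise acyclic fibration; the statement then follows immediately from the characterization of $\post_{\X\pitchfork\D}$ and $\post_{\Z\pitchfork\D}$ as the smallest classes containing these generators and closed under the three operations above.

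To carry out step (ii), I would first note that since $(\X,\Z)$ is a Postnikov presentation of $(\M, \Fib, \Cof, \WE)$, the fibrations $\Fib$ and acyclic fibrations $\Fib \cap \WE$ in $\M$ are each characterized by a right lifting property (against $\Cof \cap \WE$ and $\Cof$, respectively); in particular, each of these classes is closed in $\M$ under composition, pullback, and inverse limits indexed by ordinals. Because a natural transformation in $\M^{\D}$ is an objectwise (acyclic) fibration precisely when each of its components is an (acyclic) fibration in $\M$, and because pullbacks and limits of towers in $\M^{\D}$ are formed objectwise, the analogous closure properties hold for the objectwise classes in $\M^{\D}$.

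For step (i), I would use the explicit description of the elements of $\X \pitchfork \D$ given in Remark \ref{rmk:XtimesD}. Given $p \colon E \to B$ in $\X$ and $d \in \ob \D$, the component of $p \pitchfork \D(-,d)$ at $d'\in \ob\D$ is $\prod_{\D(d',d)} p$, which is a product of fibrations and hence itself a fibration in $\M$, since $\Fib = \rlp{(\Cof \cap \WE)}$ is closed under arbitrary products. The same argument, applied to $\Z$ and the class $\Fib \cap \WE = \rlp{\Cof}$, shows that elements of $\Z \pitchfork \D$ are objectwise acyclic fibrations.

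I do not anticipate any serious obstacle here: the lemma is essentially a formal consequence of the fact that the generating (acyclic) fibrations become objectwise (acyclic) fibrations after applying $-\pitchfork \D$, together with the objectwise nature of limits in $\M^{\D}$. The only small subtlety is to invoke Remark \ref{rmk:postx-closed} to avoid an explicit transfinite induction on the construction of Postnikov towers.
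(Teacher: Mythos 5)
Your proof is correct and follows essentially the same route as the paper: both arguments use the explicit description in Remark \ref{rmk:XtimesD} to see that the generators are objectwise (acyclic) fibrations, and then exploit the fact that limits in $\M^{\D}$ are computed objectwise together with the closure of $\Fib$ and $\Fib\cap\WE$ (as right lifting classes) under products, pullbacks, composition, and limits of towers. The only cosmetic difference is that you package the closure step via Remark \ref{rmk:postx-closed} rather than inspecting the components of the Postnikov tower directly, which is a perfectly valid way to avoid an explicit transfinite induction.
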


\begin{proof} 
By the explicit description given in  Remark \ref{rmk:XtimesD}, it is obvious that elements of $X \pitchfork \D$ are objectwise fibrations. Since limits in $\M^\D$ are computed objectwise, the component at $d \in \ob \D$ of a $(\X \pitchfork \D)$-Postnikov tower is a tower in $\M$ built from the $d$-components of maps in $X \pitchfork \D$, which we know are fibrations in $\M$.  Because fibrations are characterized by a right lifting property, retracts of towers of pullbacks of products of such maps are again fibrations in $\M$.  The proof for $\Z$ is identical to that for $\X$, simply replacing fibrations with acyclic fibrations.
\end{proof}

\subsection{Existence of injective model structures}\label{sec:existence-injective}

In this section, we exploit Postnikov presentations, in order to provide conditions under which injective model structures exist.

\begin{rmk} The main theorem of \cite{hess-shipley}, which provides conditions for left transfer of model structure to categories of coalgebras over comonads, can unfortunately not be applied directly to solve Problem \ref{prob}, as a crucial axiom from \cite{hess-shipley} (Axiom (K6)) does not hold in this context.
\end{rmk}

Our partial solutions to Problem \ref{prob} rely on Corollary \ref{cor:postnikov}.   We first show that it is enough to prove that the two desired types of factorization actually exist. 

\begin{prop}\label{prop:enough-bc} Let $\D$ be a small category, and let $(\M,\Fib, \Cof, \WE)$ be a model category with Postnikov presentation $(\X, \Z)$.
If it is possible to factor any map in $\M^\D$ as 
\begin{itemize} 
\item [(a)] an objectwise cofibration followed by a map in $\widehat{\post_{\Z\pitchfork \D}}$, and as 
\item[(b)] an objectwise acyclic cofibration followed by a map in $\widehat{\post_{\X\pitchfork \D}}$, 
\end{itemize} 
then $\M^\D$ admits the injective model structure, with Postnikov presentation $(\X\pitchfork \D, \Z\pitchfork \D)$.
\end{prop}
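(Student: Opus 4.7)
The plan is to deduce this from Corollary \ref{cor:postnikov} applied to the adjunction
$$\adjunct{\M^{\D}}{\M^{\D_{\delta}}}{\iota_{\D}^{*}}{\ran_{\iota_{\D}}}$$
of Lemma \ref{lem:diag-coalg}, where $\M^{\D_{\delta}}\cong \prod_{d\in \ob\D}\M$ carries its objectwise model structure. Since this product model structure inherits the Postnikov presentation $(\X\pitchfork \D_{\delta},\Z\pitchfork \D_{\delta})$ coordinatewise from $(\X,\Z)$, and since the resulting left-induced model structure on $\M^{\D}$ will have objectwise weak equivalences and objectwise cofibrations, it will be exactly the injective model structure. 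So I need only verify the three hypotheses of Corollary \ref{cor:postnikov}.

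The first step is to translate $R=\ran_{\iota_{\D}}$ applied to the generators. By Remark \ref{rmk:pitchfork-equality}, $\ran_{\iota_{\D}}(\X\pitchfork \D_{\delta})=\X\pitchfork \D$ and $\ran_{\iota_{\D}}(\Z\pitchfork \D_{\delta})=\Z\pitchfork \D$, so the Postnikov presentation produced by Corollary \ref{cor:postnikov} will be exactly $(\X\pitchfork \D,\Z\pitchfork \D)$. Next, since $L=\iota_{\D}^{*}$ is evaluation on the discrete subcategory, a morphism $\tau \colon \Phi\to \Psi$ of $\M^{\D}$ satisfies $\iota_{\D}^{*}\tau\in \Cof$ (respectively, $\Cof\cap \WE$, respectively, $\WE$) in $\M^{\D_{\delta}}$ if and only if $\tau$ is an objectwise cofibration (respectively, objectwise acyclic cofibration, objectwise weak equivalence) in $\M^{\D}$. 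Under this identification, hypotheses (a) and (b) of the proposition are precisely conditions (b) and (c) of Corollary \ref{cor:postnikov}.

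The only remaining condition is (a) of Corollary \ref{cor:postnikov}, namely that $\iota_{\D}^{*}\bigl(\post_{\Z\pitchfork \D}\bigr)$ consist of objectwise weak equivalences. This is exactly the content of Lemma \ref{lem:post-objectwise}: every element of $\post_{\Z\pitchfork \D}$ is an objectwise acyclic fibration in $\M^{\D}$, and restriction along $\iota_{\D}$ preserves this property componentwise.

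With all three hypotheses of Corollary \ref{cor:postnikov} verified, it produces the left-induced model structure on $\M^{\D}$ with Postnikov presentation $(\X\pitchfork \D, \Z\pitchfork \D)$, and by the identifications above this model structure is the injective one. There is no genuine obstacle in the argument: the substantive work has been shifted to Lemma \ref{lem:post-objectwise} (already established) and to the existence of the two factorizations assumed as hypotheses, which in practice is the actually delicate point and is what requires nontrivial work in the applications of this proposition in the sequel.
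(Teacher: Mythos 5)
Your proposal is correct and follows essentially the same route as the paper: apply Corollary \ref{cor:postnikov} to the adjunction $\iota_{\D}^{*}\dashv \ran_{\iota_{\D}}$ with the objectwise Postnikov presentation $(\X\pitchfork\D_{\delta},\Z\pitchfork\D_{\delta})$ on $\M^{\D_{\delta}}$, observe that the two factorization hypotheses are exactly conditions (b) and (c) of that corollary, and deduce its condition (a) from Lemma \ref{lem:post-objectwise}. The paper's proof is just a terser version of the same argument.
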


\begin{proof} We need only to check that condition (a) of Corollary \ref{cor:postnikov} holds for the adjunction
$$\adjunct {\M^{\D}}{\M^{\D_{\delta}}}{\iota_{\D}^{*}}{\ran_{\iota_{\D}}}$$
with respect to the induced Postnikov presentation $(\X\pitchfork \D_{\delta}, \Z\pitchfork \D_{\delta}) $ of the objectwise model structure on $\M^{\D_{\delta}}$.  

By Lemma \ref{lem:post-objectwise}, every element of $\post_{\Z\pitchfork \D}$ is an acyclic fibration. Since acyclic fibrations  are, in particular, weak equivalences, we conclude that $(\Z\pitchfork \D)$-Postnikov towers are objectwise weak equivalences, as desired.
\end{proof}

\begin{rmk} The proof above illustrates the utility of Postnikov presentations.  Suppose that we knew only that $(\M,\Fib, \Cof, \WE)$ was fibrantly generated by $(\X, \Z)$,  and that any map in $\M^\D$ could be factored as
\begin{itemize} 
\item [(a)] an objectwise cofibration followed by a map in $\rlp{\big(\llp{(\Z\pitchfork \D)}\big)}$, and as 
\item[(b)] an objectwise acyclic cofibration followed by a map in $\rlp{\big(\llp{(\X\pitchfork \D)}\big)}$.
\end{itemize}
According to Corollary \ref{cor:leftind-fibgen}, to conclude that the injective model structure on $\M^\D$ exists,  we must show
$$\iota_{\D}^{*}\Big(\rlp{\big(\llp{(\Z\pitchfork \D)}\big)}\Big)\subseteq \WE,$$
 which is not as obvious as the fact that $\iota_{\D}^{*}(\post_{\Z\pitchfork\D})\subseteq \WE$ and which may not even be true in general.
\end{rmk}

We now show that if the cofibrations of $\M$ are exactly the monomorphisms, then one of the required factorization axioms always holds.

\begin{thm}\label{thm:ab} Let $\D$ be a small category, and let $(\M,\Fib, \Cof, \WE)$ be a model category with  Postnikov presentation $(\X, \Z)$ such that the cofibrations in $\M$ are exactly the monomorphisms. If  for all  morphisms $\tau\colon  \Phi \to \Psi$ in $\M^{\D}$ there exists an objectwise acyclic cofibration $\iota$ and a map $\rho\in \widehat{\mathsf{Post}_{\X\pitchfork \D}}$ such that $\tau=\rho\iota$, then $\M^{\D}$ admits an injective model structure with Postnikov presentation $(\X\pitchfork \D, \Z \pitchfork \D)$.
\end{thm}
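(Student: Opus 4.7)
The plan is to invoke Proposition \ref{prop:enough-bc}, which reduces the existence of the injective model structure on $\M^{\D}$ to two factorization axioms. Axiom (b)---an objectwise acyclic cofibration followed by a map in $\widehat{\post_{\X\pitchfork\D}}$---is given by hypothesis, so the real work is producing factorization (a): every $\tau\colon \Phi\to\Psi$ in $\M^{\D}$ as an objectwise cofibration followed by a map in $\widehat{\post_{\Z\pitchfork\D}}$.

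To do so, I would first descend to the discretization $\M^{\D_\delta}\cong\prod_{d\in\ob\D}\M$, whose product model structure inherits the Postnikov presentation $(\X\pitchfork\D_\delta,\Z\pitchfork\D_\delta)$ from $(\X,\Z)$. Using the $(\Cof,\Fib\cap\WE)$-factorizations in each copy of $\M$ gives a factorization
\[\iota_\D^{*}\Phi\xrightarrow{\,i\,} N\xrightarrow{\,q\,}\iota_\D^{*}\Psi\]
in $\M^{\D_\delta}$, with $i$ an objectwise cofibration and $q\in\widehat{\post_{\Z\pitchfork\D_\delta}}$. Applying the right adjoint $\ran_{\iota_\D}$ and using Remark \ref{rmk:pitchfork-equality} together with the fact that right adjoints preserve pullbacks and inverse limits, $\ran_{\iota_\D}(q)\in\widehat{\post_{\Z\pitchfork\D}}$.

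Next, I would pull back $\ran_{\iota_\D}(q)$ along the unit $\eta_\Psi\colon \Psi\to\ran_{\iota_\D}\iota_\D^{*}\Psi$:
\[\xymatrix{P\ar[r]\ar[d]_{\bar q}\ar@{}[dr]|(.2){\lrcorner} & \ran_{\iota_\D} N\ar[d]^{\ran_{\iota_\D} q}\\ \Psi\ar[r]^-{\eta_\Psi} & \ran_{\iota_\D}\iota_\D^{*}\Psi.}\]
Since $\widehat{\post_{\Z\pitchfork\D}}$ is closed under pullback (Remark \ref{rmk:postx-closed} together with the easy observation that pulling back a retract diagram yields a retract diagram), $\bar q\in\widehat{\post_{\Z\pitchfork\D}}$. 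Naturality of $\eta$ gives the identity $\ran_{\iota_\D}(q)\circ\ran_{\iota_\D}(i)\circ\eta_\Phi=\eta_\Psi\circ\tau$, so the universal property of the pullback supplies a map $\kappa\colon\Phi\to P$ with $\bar q\circ\kappa=\tau$.

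The main obstacle, and the step that uses the hypothesis in an essential way, is verifying that $\kappa$ is an objectwise cofibration. Because cofibrations in $\M$ coincide with monomorphisms, it suffices to show $\kappa$ is an objectwise monomorphism. The composite $\Phi\xrightarrow{\kappa} P\to\ran_{\iota_\D} N$ equals $\ran_{\iota_\D}(i)\circ\eta_\Phi$; the unit $\eta_\Phi$ is an objectwise monomorphism by Remark \ref{rmk:eta-monic}, and $\ran_{\iota_\D}(i)$ is objectwise a product of the monomorphisms $i_{d'}$ and hence an objectwise monomorphism. Since monomorphisms cancel on the left, $\kappa$ is an objectwise monomorphism. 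This completes factorization (a), and Proposition \ref{prop:enough-bc} then delivers the injective model structure on $\M^{\D}$ with Postnikov presentation $(\X\pitchfork\D,\Z\pitchfork\D)$.
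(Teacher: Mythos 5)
Your proof is correct, and it follows the same overall strategy as the paper's: reduce to Proposition \ref{prop:enough-bc} (factorization (b) being the hypothesis), then manufacture factorization (a) by factoring in $\M^{\D_{\delta}}$, transposing along $\ran_{\iota_{\D}}$, and correcting with a pullback, with the hypothesis $\Cof=$ monomorphisms entering only to identify the left factor as an objectwise cofibration. The one genuine difference is in which map you factor downstairs and, correspondingly, which pullback you use to correct. The paper factors only the terminal map $\iota_{\D}^{*}\Phi\to *$ as $\rho\theta$, transposes to get an objectwise monomorphism $\theta^{\sharp}\colon\Phi\to\ran_{\iota_{\D}}(\Phi')$ over $*$, and then corrects by the "graph" map $(\theta^{\sharp},\tau)\colon\Phi\to\ran_{\iota_{\D}}(\Phi')\times\Psi$, with $\operatorname{proj}_{2}$ obtained by pulling back $\ran_{\iota_{\D}}(\rho)$ along $\Psi\to *$; the fact that $(\theta^{\sharp},\tau)$ is a monomorphism is Remark \ref{exer:mono}(2). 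You instead factor $\iota_{\D}^{*}\tau$ itself and pull $\ran_{\iota_{\D}}(q)$ back along the unit $\eta_{\Psi}$, detecting that the induced map $\kappa$ is a monomorphism by left-cancellation after composing with the projection to $\ran_{\iota_{\D}}N$ --- which is the same mechanism underlying Remark \ref{exer:mono}(2). Your route is the direct "mate of the factorization" construction and requires the (easy but worth stating, as you do) observation that $\widehat{\post_{\Z\pitchfork\D}}$, not just $\post_{\Z\pitchfork\D}$, is closed under pullback, since the factorization in $\M^{\D_{\delta}}$ a priori only lands in the retract closure; the paper's version is marginally more economical in that it only ever needs a fibrant-replacement-style factorization of $\iota_{\D}^{*}\Phi\to *$ and pullbacks along maps to $*$, i.e., products. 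Both arguments use the same key inputs (Remarks \ref{rmk:eta-monic}, \ref{exer:mono}, and \ref{rmk:pitchfork-equality}, plus preservation of Postnikov towers by the right adjoint) and are equally valid.
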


\begin{rmk}\label{exer:mono}
The following general properties of monomorphisms, which are easy to check, are used in the proof of this theorem.  

\begin{enumerate}
\item If $\{i_{\alpha}: A_{\alpha}\to B_{\alpha}\mid \alpha \in  J\}$ is a collection of monomorphisms in a category $\C$ that admits products, then 
$$\prod_{\alpha\in  J}i_{\alpha}\colon\prod_{\alpha\in  J}A_{\alpha}\to \prod_{\alpha\in  J}B_{\alpha}$$ is also a monomorphism in $\C$.
\item If $i:A\to B$ is a monomorphism in a category $\C$ admitting pullbacks, then for every commuting diagram in $\C$
$$\xymatrix{A\ar [d]_{i}\ar [r]^{f}&C\ar [d]^{g}\\ B\ar [r]^{h}&D,
}$$
the induced morphism $(i,f): A \to B\times_{D} C$ is also a monomorphism.
\end{enumerate}
\end{rmk}

\begin{proof}[Proof of Theorem \ref{thm:ab}] By Proposition \ref{prop:enough-bc}, we need only to check that condition (b) of Corollary \ref{cor:postnikov} holds for the adjunction
$$\adjunct {\M^{\D}}{\M^{\D_{\delta}}}{\iota_{\D}^{*}}{\ran_{\iota_{\D}}}$$
with respect to the induced Postnikov presentation $(\X\pitchfork \D_{\delta}, \Z\pitchfork \D_{\delta}) $ of $\M^{\D_{\delta}}$ of the objectwise model structure on $\M^{\D_{\delta}}$.

Let $\tau\colon  \Phi \to \Psi$ be a morphism in $\M^{\D}$.  Let $*$ denote the constant diagram at the terminal object. Since $(\X\pitchfork \D_{\delta}, \Z\pitchfork \D_{\delta})$ is a Postnikov presentation for $\M^{\D_{\delta}}$, the unique morphism $\iota_{\D}^{*}\Phi\to *$ in $\M^{\D_{\delta}}$ admits a factorization
$$\xymatrix{\iota_{\D}^{*}\Phi \ar [dr]_{\theta} \ar [rr]^{!}&&{*}\\ &\Phi'\ar [ur]_{\rho}}$$
where $\theta$ is an objectwise cofibration, i.e., an objectwise monomorphism, and $\rho\in \mathsf{Post}_{\Z\pitchfork \D_{\delta}}$.  Taking transposes, we obtain a commuting diagram in $\M^{\D}$
$$\xymatrix{&\Phi\ar[dl]_{\eta_{\Phi}} \ar [dr]_-{\theta^{\sharp}} \ar [rr]^{!}&&\ran_{\iota_{\D}}{*}=*\\ \ran_{\iota_{\D}}(\iota_{\D}^{*}\Phi)\ar[rr]_{\ran_{\iota_{\D}}(\theta)}&& \ran_{\iota_{\D}}(\Phi')\ar [ur]_{\ran_{\iota_{\D}}(\rho)}}$$
where
\begin{itemize}
\item $\eta_{\Phi}$ is an objectwise monomorphism { by Remark \ref{rmk:eta-monic}},
\smallskip

\item $\ran_{\iota_{\D}}(\theta)$ is an objectwise monomorphism by Remark \ref{exer:mono}(1), using the explicit description of $\ran_{\iota_{\D}}$ above, and
\smallskip 

\item $\ran_{\iota_{\D}}(\rho)\in \ran_{\iota_{\D} }\big(\mathsf{Post}_{\Z\pitchfork \D_{\delta}}\big) \subseteq \mathsf{Post}_{\Z\pitchfork \D}.$
\end{itemize}
The inclusion in the last point follows from the definition of Postnikov towers, the fact that $\ran_{\iota_{\D}}$ is a right adjoint, {and Remark \ref{rmk:pitchfork-equality}}.  It follows that the unique morphism $\Phi\to *$ factors as an objectwise monomorphism, $\theta^{\sharp}$, followed by an element of $\mathsf{Post}_{\Z\pitchfork \D}$, namely  $\ran_{\iota_{\D}}(\rho)$.

To conclude, consider the following commuting diagram  in $\M^{\D}$.
$$\xymatrix{\Phi \ar [dr]_-{(\theta^{\sharp}, \tau)} \ar [rr]^{\tau}&&\Psi\\
& \ran_{\iota_{\D}}(\Phi')\times\Psi \ar [ur]_{\operatorname{proj}_{2}}}$$
By Remark \ref{exer:mono}(2), the natural transformation $(\theta^{\sharp}, \tau)$ is an objectwise monomorphism.  Moreover, $\operatorname{proj}_{2} \in \mathsf{Post}_{\Z\pitchfork \D}$, as it is obtained by pulling back $\ran_{\iota_{\D}}(\rho)$ along the unique morphism $\Psi \to *$, and $ \mathsf{Post}_{\Z\pitchfork \D}$ is closed under pullbacks. The factorization $\tau= \operatorname{proj}_{2}\circ(\theta^{\sharp},\tau)$  satisfies condition (b) of Corollary \ref{cor:postnikov}.
\end{proof}

\subsection{Properties of injective model structures}\label{sec:rtprop-enrich}
 
 As seen in Section \ref{sec:elem-prop}, left properness is inherited by all left-induced model structures.  We show now that right properness is inherited as well for {the injective model category structure on a diagram category $\M^{\D}$.} 
 
Recall that a model structure is \emph{right proper} if the pullback of a weak equivalence along a fibration is always a weak equivalence.

\begin{lem}\label{properness of MD}
Let $(\M, \Fib, \Cof, \WE)$ be a right proper model category, let $\D$ be a small category, and suppose that $\M^{\D}$ admits the injective model structure. 
If $\M$ is right proper, then so is $\M^{\D}$.
\end{lem}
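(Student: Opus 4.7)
The plan is to reduce the statement to an objectwise check using two facts we have already available: limits in $\M^\D$ are computed objectwise, and by Lemma \ref{lem:inj-fib-obj} every injective fibration is an objectwise fibration. Weak equivalences in the injective model structure are objectwise weak equivalences by definition.

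First I would set up a pullback square in $\M^\D$
$$\xymatrix{\Pi \ar[r] \ar[d] \ar@{}[dr]|(.2){\lrcorner} & \Phi \ar[d]^{p} \\ \Psi \ar[r]^-{w} & \Omega}$$
with $p$ an injective fibration and $w$ an objectwise weak equivalence, and aim to show that the left-hand vertical is an objectwise weak equivalence. Evaluating at an arbitrary $d \in \ob\D$ yields a pullback square in $\M$, since the right adjoint $\mathrm{ev}_d \colon \M^\D \to \M$ preserves limits.

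Next I would apply Lemma \ref{lem:inj-fib-obj} to conclude that $p_d \colon \Phi(d) \to \Omega(d)$ is a fibration in $\M$, and note that $w_d \colon \Psi(d) \to \Omega(d)$ is a weak equivalence by definition of the injective weak equivalences. Right properness of $\M$ then implies that the pullback of $w_d$ along $p_d$ is a weak equivalence in $\M$. Since this holds for every $d$, the left vertical is an objectwise, hence injective, weak equivalence, which is what we needed.

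There is no real obstacle: the argument is essentially a pointwise transfer, made possible entirely by the fact that injective fibrations have the correct objectwise behaviour (Lemma \ref{lem:inj-fib-obj}) and that limits are objectwise. The only place where one could imagine trouble is precisely the identification of injective fibrations with objectwise fibrations, and that has already been handled.
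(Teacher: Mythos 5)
Your proposal is correct and follows exactly the paper's argument: the paper likewise reduces right properness of $\M^{\D}$ to the fact that injective fibrations are objectwise fibrations (Lemma \ref{lem:inj-fib-obj}), using that limits in $\M^{\D}$ are computed objectwise; you have simply written out the pointwise pullback check that the paper leaves implicit.
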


\begin{proof}  Since limits in $\M^{\D}$ are calculated objectwise, to prove right properness of the injective model category structure on $\M^{\D}$,  it is enough to show that every injective fibration in $\M^{\D}$ is an objectwise fibration. This was done in Lemma \ref{lem:inj-fib-obj}.
\end{proof}

Let $\V$ be a closed symmetric monoidal category. Recall that if $\M$ is a $\V$-category that is tensored and cotensored over $\V$, and $\D$ is any small category, the diagram category $\M^{\D}$ inherits a natural  $\V$-enrichment and is moreover tensored and cotensored over $\V$ (cf.~Appendix \ref{app}).
 
\begin{lem}\label{MD V model}
Let $(\V, \sm, \II)$ be a closed symmetric monoidal model category. Let $\M$ be a model category, and let $\D$ be a small category. Suppose that $\M^{\D}$ admits the injective model structure.
If $\M$ is a $\V$-model category, then so is $\M^{\D}$.
\end{lem}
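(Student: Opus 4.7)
The plan is to apply Lemma \ref{lem:C V model} to the adjunction
$$\adjunct{\M^{\D}}{\M^{\D_{\delta}}}{\iota_{\D}^{*}}{\ran_{\iota_{\D}}}$$
that left-induces the injective model structure on $\M^\D$. To do so, I must verify three things: that the target $\M^{\D_\delta}$ is a $\V$-model category, that $\M^{\D}$ is tensored and cotensored over $\V$, and that $\iota_{\D}^{*}\dashv \ran_{\iota_{\D}}$ is a $\V$-adjunction. The lemma then immediately yields that the left-induced (i.e., injective) model structure on $\M^{\D}$ is a $\V$-model structure.

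First, I would observe that $\M^{\D_{\delta}}\cong \prod_{d\in\ob \D}\M$, endowed with the objectwise model structure, is a $\V$-model category: all structure (enrichment, tensor, cotensor, factorizations, lifting properties) is inherited coordinatewise from $\M$, and SM7 together with the unit axiom are visibly preserved under products. Second, for the tensoring/cotensoring of $\M^{\D}$, I would use the standard formulas: for $\Phi \in \M^{\D}$ and $V\in \V$, define $(V\ot \Phi)(d):=V\ot \Phi(d)$ and $(V\pitchfork \Phi)(d):=V\pitchfork \Phi(d)$, with enrichment given by the end $\M^{\D}(\Phi,\Psi):=\int_{d\in \D}\M(\Phi(d),\Psi(d))$; the required natural isomorphisms follow from the fact that $\V$ is closed symmetric monoidal and that ends commute with cotensors.

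Third, and this is the genuine content, I would check that $\iota_{\D}^{*}\dashv \ran_{\iota_{\D}}$ is a $\V$-adjunction. The restriction functor $\iota_{\D}^{*}$ is strong monoidal in the appropriate sense: it preserves the tensor with $\V$ on the nose, since $\iota_{\D}^{*}(V\ot \Phi)(d)=V\ot \Phi(d)=V\ot (\iota_{\D}^{*}\Phi)(d)$. This makes $\iota_{\D}^{*}$ a $\V$-functor, and its right adjoint $\ran_{\iota_{\D}}$ then inherits a canonical $\V$-enrichment via the pointwise formula for the right Kan extension as a product. The adjunction isomorphism
$$\M^{\D_{\delta}}(\iota_{\D}^{*}\Phi,\Psi)\cong \M^{\D}(\Phi, \ran_{\iota_{\D}}\Psi)$$
is $\V$-natural, since both sides, computed as ends, agree by the universal property of right Kan extensions.

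With these three verifications in hand, Lemma \ref{lem:C V model} applies directly and yields that $\M^{\D}$, with its injective model structure, is a $\V$-model category, as required. The only potential obstacle is the bookkeeping around the $\V$-enrichment of $\ran_{\iota_{\D}}$, but since $\D_{\delta}$ is discrete the right Kan extension reduces to a product indexed by hom-sets of $\D$, and products are well behaved with respect to $\V$-enrichments, so no real difficulty arises.
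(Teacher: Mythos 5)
Your proposal is correct and follows essentially the same route as the paper: the paper's proof also reduces the claim to Lemma \ref{lem:C V model} applied to the adjunction $\iota_{\D}^{*}\dashv \ran_{\iota_{\D}}$, observing that $\iota_{\D}^{*}$ is a tensor $\V$-functor because tensors in diagram categories are defined objectwise (cf.~Section \ref{app:diag-cat}). You simply spell out in more detail the verifications (that $\M^{\D_{\delta}}$ is a $\V$-model category and that $\M^{\D}$ is tensored and cotensored) that the paper delegates to the appendix.
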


\begin{rmk} In the case where $\V$ is an excellent model category (\cite[Definition A.3.2.16]{lurie}) and $\M$ is combinatorial, Lurie proved in \cite[Proposition A.3.3.2]{lurie} that the injective model structure on $\M^{\D}$ exists and is a $\V$-model structure, for all small categories $\D$.
\end{rmk}

\begin{proof} By Lemma \ref{lem:C V model}, it is enough to show that the adjunction $\iota^*_{\D} \dashv \ran_{\iota_{\D}}$ is a $\V$-adjunction, which is obvious, since $\iota^*_{\D}$ is clearly a tensor $\V$-functor (cf.~Definition \ref{tens cotens} and Section \ref{app:diag-cat}). 
\end{proof}

\section{(Generalized) Reedy diagram categories and fibrant generation}
 
In this section, we show that if $\R$ is a (generalized) Reedy category, and $\M$ is a fibrantly generated model category, then the diagram category $\M^{\R}$, endowed with its Reedy model structure, is also fibrantly generated (Theorems \ref{fibgen} and \ref{thm:generalizedReedy}).  {Moreover, if $\R$ is a (classical) Reedy category and if $\M$ admits a Postnikov presentation, then so does $\M^\R$ (Theorem \ref{thm:reedy-post}). A similar result is true for generalized Reedy categories, though we neglect to give full details of the proof here (Remark \ref{rmk:gen-reedy-post}).}
 
\subsection{The case of Reedy categories}

We begin by recalling the definition of a Reedy category, as formulated in \cite{hovey}.

\begin{defn} A \emph{Reedy category} is a small category $\R$ together with a \emph{degree function} $d: \ob \R \lra \mathbb{N}$ and two wide subcategories $\R^+$ and $\R^{-}$ satisfying the following axioms.
\begin{enumerate}
\item Non-identity morphisms of $\R^+$ {strictly} raise degree.
\item  Non-identity morphisms of $\R^{-}$ {strictly} lower degree.
\item Every morphism in $\R$ factors uniquely as a morphism in $\R^{-}$ followed by a morphism in $\R^+$.
\end{enumerate}
\end{defn}

\begin{ex} An inverse or a direct category is an example of a Reedy category. In the former case, all morphisms are degree-decreasing, and in the latter case all morphisms are degree-increasing.
\end{ex}

The following constructions play an essential role in the definition of the Reedy model structure.

\begin{defn} Let $\R$ be a Reedy category, and let $\C$ be a bicomplete category. 
The \emph{$r$-th latching object} of an object $\Phi$ in $\C^{\R}$ is 
$$L_r\Phi = \colim (\R^+_{<\mathrm{deg}(r)}/r \xrightarrow U\R \xrightarrow \Phi \C ),$$
and the  \emph{$r$-th matching object}  is 
$$M_r\Phi = \lim (r/\R^{-}_{<\mathrm{deg}(r)} \xrightarrow U \R \xrightarrow\Phi \C ),$$
where $\R^+_{<\mathrm{deg}(r)}/r$ is the slice category over $r$ with objects restricted to degree less than the degree of $r$ and morphisms in $\R^+$, and $r/\R^{-}_{<\mathrm{deg}(r)}$ is defined dually, and $U$ denotes a forgetful functor from a slice category.
\end{defn}

\begin{rmk}\label{rmk:relative-matching} For every $r\in \ob \R $ and every morphism $\tau\colon  \Phi \to \Psi$, there are natural morphisms in $\C$: the \emph{relative latching map} 
$$\ell_{r}(\tau)\colon\Phi(r) \sqcup_{L_r\Phi} L_r\Psi \lra \Psi(r)$$
and the  \emph{relative matching map}
$$m_{r}(\tau)\colon\Phi (r) \lra M_r\Phi \times_{M_r\Psi} \Psi(r).$$
\end{rmk}

\begin{thm}[{\cite[Theorem 5.2.5]{hovey}}] Let $(\M, \Fib, \Cof, \WE)$ be a model category and $\R$  a Reedy category. Then $\M^{\R}$ can be equipped with a model structure (called the \emph{Reedy model structure}) where a morphism $\tau\colon \Phi \to \Psi$ is 
\begin{itemize}
\item a weak equivalence if and only if $\tau_r\in \WE$ for every $r\in \ob \R$;
\item a cofibration if and only if $\ell_{r}(\tau)\in \Cof$ for every $r\in \ob \R$; and
\item a fibration if and only if $m_{r}(\tau)\in \Fib$ for every $r\in \ob \R$.
\end{itemize}
\end{thm}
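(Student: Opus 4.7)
The plan is to verify the model-category axioms for $\M^{\R}$ equipped with the three classes of maps described in the statement. First, $\M^{\R}$ is bicomplete because limits and colimits are computed pointwise in $\M$. The class of weak equivalences satisfies two-out-of-three and contains identities, inherited objectwise from $\WE$. Closure of the classes of cofibrations and fibrations under retracts follows from the functoriality of the relative latching and matching constructions: a retract diagram in $\M^{\R}$ produces, at each $r$, a retract of the relative latching (respectively, matching) map in $\M$, which stays in $\Cof$ (respectively, $\Fib$).

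The heart of the argument is the construction of the two factorizations, which I would perform by recursion on the degree function. Given $\tau\colon\Phi\to\Psi$, suppose that $\Xi$, $\iota$, and $\rho$ have already been defined as a functor and natural transformations on the full subcategory of $\R$ spanned by objects of degree less than $n$, in such a way that the relative latching map of $\iota$ lies in $\Cof$ and the relative matching map of $\rho$ lies in $\Fib\cap\WE$ at every such object. For each $r$ of degree $n$, both $L_r\Xi$ and $M_r\Xi$ are well-defined from this lower-degree data, and one obtains a canonical ``gap'' morphism in $\M$
\[
\Phi(r)\sqcup_{L_r\Phi} L_r\Xi \;\longrightarrow\; M_r\Xi\times_{M_r\Psi}\Psi(r).
\]
Factoring this map in $\M$ as a cofibration followed by an acyclic fibration defines $\Xi(r)$ and extends $\Xi$, $\iota$, $\rho$ to degree $n$; factoring as an acyclic cofibration followed by a fibration produces the other Reedy factorization. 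Iterating the construction through all degrees yields the required global factorization.

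The lifting axioms are established by a parallel induction on degree. Given a lifting problem between $f$ in the class of (acyclic) cofibrations and $g$ in the class of (acyclic) fibrations, I would construct the lift one degree at a time: at an object $r$ of degree $n$, the partial lift already defined on lower degrees induces a map from $L_r$ of the codomain of $f$ and a map into $M_r$ of the codomain of $g$, reducing the problem to a single lifting problem in $\M$ between the relative latching map of $f$ (an (acyclic) cofibration in $\M$ by hypothesis) and the relative matching map of $g$ (an (acyclic) fibration in $\M$ by hypothesis). This lifting problem has a solution by the weak factorization system on $\M$.

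The main obstacle is guaranteeing that the inductively constructed data genuinely extends to a functor on $\R$ (and that the inductively constructed lifts assemble into a natural transformation), i.e.\ that the newly defined structure maps at degree $n$ are compatible with all morphisms of $\R$ crossing degrees. This is precisely where the Reedy axioms intervene: for any morphism $f\colon r\to r'$ in $\R$, the unique factorization $f = f^{+} f^{-}$ with $f^{+}\in\R^{+}$ and $f^{-}\in\R^{-}$ forces the action of $\Xi$ on $f$ via the universal property of $L_{r'}\Xi$ (for the $\R^{+}$ part) and of $M_{r}\Xi$ (for the $\R^{-}$ part), and compatibility with composition reduces to the analogous compatibility at lower degrees together with these universal properties. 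Carefully book-keeping this functoriality is the technical heart of the argument, but it is a routine verification once the inductive setup is in place; the details are in Hovey's textbook.
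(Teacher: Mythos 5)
The paper does not prove this statement; it is quoted directly from Hovey's book, so the only comparison available is with the standard argument there, which your sketch follows in outline: pointwise (co)limits, degreewise induction for the factorizations via the ``gap'' map, and degreewise induction for the lifts. That outline is the right one.

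There is, however, a genuine gap at the point where you invoke acyclicity. The Reedy acyclic cofibrations are \emph{defined} as the maps $\tau$ with every $\ell_r(\tau)\in\Cof$ that are also objectwise weak equivalences; your lifting argument instead needs every $\ell_r(\tau)$ to be an \emph{acyclic} cofibration in $\M$ (and dually for fibrations), and you assert this holds ``by hypothesis.'' It does not: the equivalence of the two conditions is the technical heart of the theorem and must be proved separately. One direction requires showing that if all $\ell_s(\tau)$ in degrees $<n$ are acyclic cofibrations then $L_r\tau$ is one too (an induction through the latching colimits), so that $\tau_r$, being the composite of a pushout of $L_r\tau$ with $\ell_r(\tau)$, is a weak equivalence; the converse direction uses the retract argument against the factorization you construct. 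The same issue infects your factorization step: factoring the gap map as a cofibration followed by an acyclic fibration produces a right factor $\rho$ with all $m_r(\rho)\in\Fib\cap\WE$, but to conclude that $\rho$ is an acyclic fibration in the Reedy structure as defined in the statement you must again show that $\rho_r\in\WE$ for every $r\in\ob\R$, which needs the dual of the lemma above. Without this identification the lifting and factorization axioms are not established for the classes as actually defined. The rest of your sketch (functoriality of the inductively defined $\Xi$ via the unique $\R^-$-then-$\R^+$ factorization) is fine.
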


Diagram categories equipped with a Reedy model structure inherit cofibrant generation from the target category. The generating (acyclic) cofibrations are defined by a ``relative'' version of the construction given in Definition \ref{defn:tensorclass}. 

\begin{defn}\label{defn:reedy-cof-generators} Let $r$ be an object of a Reedy category $\R$. Define $\kappa_r \colon \partial \R(r,-)\hookrightarrow \R(r,-)$ to be the subfunctor of the representable functor consisting of all morphisms with domain $r$ that factor through an object of degree less than $\deg(r)$. Given a map $i \colon A \to B$ in a cocomplete category $\C$, let
\[ i \widehat{\otimes} \kappa_r \colon A \ot \R(r,-) \sqcup_{A \ot \partial\R(r,-)} B \ot \partial\R(r,-) \to B \ot \R(r,-)\]
be the map in $\C^\R$ defined by the ``pushout-product'' construction. For any set of maps $\I$ in  $\C$, write 
\[ \I \widehat{\otimes} \R = \{ i \widehat{\otimes} \kappa_r \mid i\in \I, r \in \ob\R\}.\] 

\end{defn}

\begin{thm}[{\cite[Proposition 15.6.24]{hirschhorn}}]\label{thm:Reedy-cofibgen}
If $(\M, \Fib, \Cof, \WE)$ is a model category that is  cofibrantly generated {(in the sense of Definition \ref{defn:modelcat}) }  by $(\I, \J)$, and $\R$ is  a Reedy category, then $\M^{\R}$, endowed with the Reedy model structure, is  cofibrantly generated {(in the same sense)} by $(\I \widehat{\otimes}\R, \J\widehat{\otimes}\R)$.
\end{thm}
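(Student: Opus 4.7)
The plan is to characterize the right classes $\rlp{(\I\widehat{\otimes}\R)}$ and $\rlp{(\J\widehat{\otimes}\R)}$ of the proposed generating families and show they coincide with the Reedy acyclic fibrations and Reedy fibrations, respectively, by means of a Leibniz-style lifting correspondence.  The first key step will be the adjunction lemma: for any morphism $i\colon A\to B$ in $\M$, any morphism $p\colon X\to Y$ in $\M^{\R}$, and any $r\in \ob\R$,
\[(i \widehat{\otimes}\kappa_{r})\boxslash p \quad\text{if and only if}\quad i\boxslash m_{r}(p).\]
This will follow from a two-variable adjunction: the bifunctor $-\otimes -\colon \M\times \Set^{\R}\to \M^{\R}$ of Definition \ref{defn:tensorclass} has a right adjoint $\hom_{\R}\colon (\Set^{\R})^{\op}\times \M^{\R}\to \M$ given by an end, with $\hom_{\R}(\R(r,-),Y)\cong Y(r)$ (Yoneda) and $\hom_{\R}(\partial\R(r,-),Y)\cong M_{r}Y$.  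The latter isomorphism will use the identification of $\partial\R(r,-)$ as the colimit in $\Set^{\R}$ of the representables $\R(t,-)$ indexed by the matching category $r/\R^{-}_{<\deg(r)}$, which is a direct consequence of Reedy unique factorization; continuity of $\hom_{\R}(-,Y)$ in its first variable then yields the matching object, and Leibniz pushout-product/pullback-hom duality exchanges $i\widehat{\otimes}\kappa_r$ with $m_r(p)$ in the lifting correspondence.

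The second step will be to invoke the standard characterization of Reedy fibrations and acyclic fibrations in terms of matching maps: $p\colon X\to Y$ in $\M^{\R}$ is a Reedy fibration (respectively, a Reedy acyclic fibration) if and only if $m_r(p)\in \Fib$ (respectively, $m_r(p)\in \Fib\cap \WE$) for every $r\in\ob\R$.  The non-trivial part is a joint induction on $\deg(r)$ showing that one may propagate (acyclic) fibrancy from $p_s$ for $\deg(s)<\deg(r)$ to $M_rp$, and then through the factorization $p_r = \mathrm{proj}_{2}\circ m_r(p)$, where $\mathrm{proj}_{2}\colon M_rX\times_{M_rY}Y(r)\to Y(r)$ is the pullback of $M_rp$; the 2-out-of-3 axiom then closes the induction.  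Combining this characterization with the adjunction lemma will yield
\[p\in \rlp{(\I\widehat{\otimes}\R)} \iff m_r(p)\in \rlp{\I}=\Fib\cap\WE \text{ for all }r \iff p\in \Fib^{\R}\cap\WE^{\R},\]
and analogously $\rlp{(\J\widehat{\otimes}\R)}=\Fib^{\R}$.  Since $(\Cof^{\R},\Fib^{\R}\cap\WE^{\R})$ and $(\Cof^{\R}\cap\WE^{\R},\Fib^{\R})$ are the two weak factorization systems of the Reedy model structure, this identification of their right classes proves cofibrant generation in the sense of Definition \ref{defn:modelcat}.

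The main obstacle I anticipate is the identification $\hom_{\R}(\partial\R(r,-),Y)\cong M_{r}Y$, which requires a careful unpacking of the subfunctor $\partial\R(r,-)\subset \R(r,-)$ as a colimit of representables indexed by $r/\R^{-}_{<\deg(r)}$; this is where the Reedy unique factorization property enters essentially.  The inductive matching-map characterization of Reedy (acyclic) fibrations also demands care in choosing the right simultaneous inductive hypothesis, but is standard.  Once these two ingredients are assembled, the result follows mechanically.  Notably, the argument proceeds entirely through lifting properties and never invokes the small object argument, so the theorem remains valid even when $\I$ and $\J$ are proper classes rather than sets, consistent with the paper's broader notion of cofibrant generation (see Remark \ref{rmk:cof-gen-warning}).
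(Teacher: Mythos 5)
Your proposal is correct and takes essentially the same approach the paper itself uses for the generalized Reedy analogue (see the proof of Theorem \ref{thm:berger-moerdijk}): the adjunction identity $(i\,\widehat{\otimes}\,\kappa_r)\boxslash\tau \iff i\boxslash m_r(\tau)$, obtained from the weighted-limit identifications $\hom_{\R}(\R(r,-),Y)\cong Y(r)$ and $\hom_{\R}(\partial\R(r,-),Y)\cong M_rY$, combined with the standard matching-map characterization of Reedy (acyclic) fibrations. For the classical case the paper simply cites \cite[Proposition 15.6.24]{hirschhorn}, whose proof is the same, and your closing remark that no smallness hypotheses are needed agrees with Remark \ref{rmk:cof.gen.small}.
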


\begin{rmk}\label{rmk:cof.gen.small}
Moreover, if the pair $(\I,\J)$ defines a cellular presentation for the model category $\M$, then $(\I \widehat{\otimes}\R, \J\widehat{\otimes}\R)$ defines a cellular presentation for the Reedy model structure (see \cite[Proposition 7.7]{riehl-verity}). We prove the dual form of this result in Theorem \ref{thm:reedy-post}.

Note that it is not automatically true under these hypotheses that $\M^\R$, $\I\widehat{\otimes}\R$, and $\J\widehat{\otimes}\R$ permit the small object argument: additional smallness conditions on the codomains of $\I$ and $\J$ are required (see \cite[Proposition 15.6.27]{hirschhorn}). However, this is irrelevant to the existence of cellular presentations for the cofibrations and acyclic cofibrations in the Reedy model structure.
\end{rmk}

We show that the dual result to this theorem holds as well: Reedy model categories also inherit fibrant generation from the target category. 

\begin{thm}\label{fibgen} If $(\M, \Fib, \Cof, \WE)$ is a model category that is fibrantly generated by sets $(\X,\Z)$, then  $\M^{\R}$, endowed with the Reedy model structure,   is fibrantly generated {by 
$\big((\X^\op \widehat{\otimes}\R^\op)^\op,(\Z^\op\widehat{\otimes}\R^\op)^\op\big)$.}
\end{thm}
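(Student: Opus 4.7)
The plan is to deduce this theorem from Theorem \ref{thm:Reedy-cofibgen} by passing to opposite categories. First I would observe that $\M$ is a model category fibrantly generated by the sets $(\X,\Z)$ in the sense of Definition \ref{defn:modelcat} if and only if $\M^\op$, equipped with its opposite model structure (in which cofibrations and fibrations are interchanged and weak equivalences preserved), is cofibrantly generated in that same sense by the sets $(\X^\op,\Z^\op)$; this is immediate from the duality between left and right lifting properties, since $\rlp{\X^\op} = (\llp\X)^\op$ and $\rlp{\Z^\op} = (\llp\Z)^\op$ in $\M^\op$. Note also that $\R^\op$ is again a Reedy category with the same degree function and with $(\R^\op)^\pm = (\R^\mp)^\op$.

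Next I would verify that, under the canonical isomorphism $((\M^\op)^{\R^\op})^\op \cong \M^\R$, the Reedy model structure on $(\M^\op)^{\R^\op}$ corresponds exactly to the Reedy model structure on $\M^\R$. The key calculation is that for every $\Phi \colon \R \to \M$ and every $r\in \ob\R$, the slice category $(\R^\op)^+_{<\deg(r)}/r$ is canonically isomorphic to $(r/\R^-_{<\deg(r)})^\op$, so the $r$-th latching object of the induced functor $\Phi^\op\colon \R^\op \to \M^\op$ is naturally identified with the opposite of the $r$-th matching object $M_r \Phi$ computed in $\M$, and symmetrically for matching in $\M^\op$ versus latching in $\M$. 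Consequently, the relative latching map of a natural transformation in $(\M^\op)^{\R^\op}$ is precisely the opposite of the relative matching map of the corresponding natural transformation in $\M^\R$, so the characterizations of Reedy (acyclic) cofibrations and fibrations transport correctly across the duality.

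With these dualities in hand, applying Theorem \ref{thm:Reedy-cofibgen} to the cofibrantly generated model category $\M^\op$ and the Reedy category $\R^\op$ yields that the Reedy model structure on $(\M^\op)^{\R^\op}$ is cofibrantly generated by $(\X^\op \widehat{\otimes} \R^\op,\, \Z^\op \widehat{\otimes} \R^\op)$, and taking opposites then gives the desired fibrant generation of $\M^\R$ by $\big((\X^\op \widehat{\otimes} \R^\op)^\op,\, (\Z^\op \widehat{\otimes} \R^\op)^\op\big)$. I expect the main obstacle to be not conceptual but notational: carefully keeping track of the various dualities—between $\M$ and $\M^\op$, between $\R$ and $\R^\op$, and in particular between the slice categories $\R^+_{<\deg(r)}/r$ and $r/\R^-_{<\deg(r)}$—so that pushouts/colimits in $(\M^\op)^{\R^\op}$ are correctly identified with pullbacks/limits in $\M^\R$ and the Reedy lifting characterizations truly correspond across $((\M^\op)^{\R^\op})^\op \cong \M^\R$. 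Once this bookkeeping is carried out, the theorem follows essentially for free from its dual.
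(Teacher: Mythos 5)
Your proposal is correct and follows essentially the same route as the paper: dualize to $(\M^\op, \Cof^\op, \Fib^\op, \WE^\op)$, which is cofibrantly generated by $(\X^\op,\Z^\op)$, note that $\R^\op$ is Reedy with $(\R^\op)^\pm = (\R^\mp)^\op$ and that $(\M^\R)^\op \cong (\M^\op)^{\R^\op}$, apply Theorem \ref{thm:Reedy-cofibgen}, and dualize back. The only difference is that you spell out the latching/matching duality underlying the identification of the two Reedy model structures, which the paper leaves implicit.
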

\begin{proof}
The dual of a Reedy category is again Reedy, with $(\R^{\op})^+ = (\R^{-})^\op$ and $(\R^{\op})^{-} = (\R^{+})^\op$.
Similarly, $(\M, \Fib, \Cof, \WE)$  is a model category fibrantly generated by $(\X, \Z)$ if and only if  $(\M^\op,  \Cof^\op, \Fib^\op, \WE^\op)$  is a model category cofibrantly generated by $(\X^\op, \Z^\op)$
{(in the sense of Definition \ref{defn:modelcat}) }.
Moreover,  $(\M^{\R})^\op \cong (\M^{\op})^{\R^{\op}}$. 

Applying Theorem \ref{thm:Reedy-cofibgen} to $\M^{\mathrm\op}$, ${\R^{\mathrm\op}}$, and $(\X^\op, \Z^\op)$, we see that $(\M^{\op})^{\R^{\op}}$ is cofibrantly generated {(in the same sense)} by $(\X^\op \widehat{\otimes}\R^\op,\Z^\op\widehat{\otimes}\R^\op)$. Dualizing one more time, we conclude that $\M^{\R}$ is fibrantly generated by  $\big((\X^\op \widehat{\otimes}\R^\op)^\op,(\Z^\op\widehat{\otimes}\R^\op)^\op\big)$.
\end{proof}

\begin{rmk} Let us unpack the meaning of the set $(\X^\op \widehat{\otimes}\R^\op)^\op$ of generating fibrations. The ``pushout product'' construction of Definition \ref{defn:reedy-cof-generators} in $\C^\op$ dualizes to a ``pullback cotensor'' construction in $\C$ in the same way that Definition \ref{defn:pitchfork} dualizes Definition \ref{defn:tensorclass}. Substituting the Reedy category $\R^\op$ for $\R$, the maps $\kappa_r$ of definition \ref{defn:reedy-cof-generators} dualize to $\kappa^r \colon \partial\R(-,r) \hookrightarrow \R(-,r)$, where $\partial\R(-,r)$ denotes the subfunctor consisting of all morphisms with codomain $r$ that factor through an object of degree less than $\deg(r)$. Given $p \colon E \to B$ in a complete category $\C$, let 
\[ p \widehat{\pitchfork} \kappa^r \colon E \pitchfork \R(-,r) \to E \pitchfork \partial\R(-,r) \times_{B \pitchfork \partial\R(-,r)} B \pitchfork \R(-,r).\] Then for a set of maps $\X$ in $\C$, we write
\[ \X \widehat{\pitchfork} \R = \{ p \widehat{\pitchfork} \kappa^r \mid p \in \X, r \in \ob\R\} = (\X^\op \widehat{\otimes}\R^\op)^\op.\]
\end{rmk}

\begin{thm}\label{thm:reedy-post} If $(\M,\Fib,\Cof,\WE)$ is a model category that has a Postnikov presentation $(\X,\Z)$, then  $\M^\R$, endowed with the Reedy model structure, has a Postnikov presentation $(\X\widehat{\pitchfork}\R,\Z\widehat{\pitchfork}\R)$.
\end{thm}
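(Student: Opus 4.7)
The strategy is to deduce the theorem by dualization from the analogous cellular presentation result, \cite[Proposition 7.7]{riehl-verity}, mentioned in Remark \ref{rmk:cof.gen.small}. This is exactly the maneuver already used in the proof of Theorem \ref{fibgen}, but now applied at the level of cellular/Postnikov presentations rather than cofibrant/fibrant generation. The key point is that the defining formulas align perfectly: by construction $\X\widehat{\pitchfork}\R = (\X^{\op}\widehat{\otimes}\R^{\op})^{\op}$, $\cell_\I$ in $\C$ corresponds to $\post_{\I^{\op}}$ in $\C^{\op}$, and the retract closure commutes with $(-)^{\op}$.

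First I would record the standard dualization facts: (i) $\R^{\op}$ is again a Reedy category with $(\R^{\op})^{\pm} = (\R^{\mp})^{\op}$; (ii) $(\X,\Z)$ is a Postnikov presentation for $(\M, \Fib, \Cof, \WE)$ if and only if $(\X^{\op},\Z^{\op})$ is a cellular presentation for the opposite model structure $(\M^{\op}, \Cof^{\op}, \Fib^{\op}, \WE^{\op})$; (iii) there is a canonical isomorphism $(\M^{\R})^{\op} \cong (\M^{\op})^{\R^{\op}}$, and under this isomorphism the Reedy model structure on the left corresponds to the Reedy model structure on the right --- this is because for $r\in\ob\R$ the formula defining the matching object in $\M^{\R}$ becomes exactly the formula defining the latching object in $(\M^{\op})^{\R^{\op}}$, so that relative matching maps in $\M^{\R}$ correspond to relative latching maps in $(\M^{\op})^{\R^{\op}}$.

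The next step is to verify that under the isomorphism in (iii) the set $\X\widehat{\pitchfork}\R$ in $\M^{\R}$ corresponds to the set $\X^{\op}\widehat{\otimes}\R^{\op}$ in $(\M^{\op})^{\R^{\op}}$, which is immediate from Definitions \ref{defn:reedy-cof-generators} and \ref{defn:pitchfork} together with the remark preceding the theorem identifying the pullback cotensor as the dual of the pushout tensor. Applying \cite[Proposition 7.7]{riehl-verity} to the cellular presentation $(\X^{\op},\Z^{\op})$ of $\M^{\op}$ and the Reedy category $\R^{\op}$ then shows that $(\X^{\op}\widehat{\otimes}\R^{\op},\, \Z^{\op}\widehat{\otimes}\R^{\op})$ is a cellular presentation for the Reedy model structure on $(\M^{\op})^{\R^{\op}}$. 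Dualizing once more via (ii) and (iii), this translates into the statement that $(\X\widehat{\pitchfork}\R,\, \Z\widehat{\pitchfork}\R)$ is a Postnikov presentation for the Reedy model structure on $\M^{\R}$.

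The main obstacle --- though essentially a routine verification rather than a substantive difficulty --- is cleanly checking the compatibility in (iii) at the level of matching/latching formulas and carefully tracking how the two generating classes match up under the opposite-category isomorphism. Since the proof of Theorem \ref{fibgen} already handled these dualities at the level of weak factorization systems and their generators, the same bookkeeping applies here with $\cell$ replaced by $\post$ and cofibrations replaced by fibrations, and no new technical input is required beyond an invocation of \cite[Proposition 7.7]{riehl-verity}.
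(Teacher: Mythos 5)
Your proposal is correct, but it packages the argument differently from the paper. You deduce the theorem by a global opposite-category dualization: pass to $(\M^{\op},\Cof^{\op},\Fib^{\op},\WE^{\op})$ and $\R^{\op}$, observe that $(\X^{\op},\Z^{\op})$ is a cellular presentation there and that $\X\widehat{\pitchfork}\R=(\X^{\op}\widehat{\otimes}\R^{\op})^{\op}$, invoke \cite[Proposition 7.7]{riehl-verity}, and dualize back --- exactly the maneuver of Theorem \ref{fibgen}, one level up. The paper instead argues directly in $\M^{\R}$: every morphism $\tau$ has a canonical presentation as a Postnikov tower whose layers are the maps $m_r(\tau)\widehat{\pitchfork}\kappa^r$ (the dual of \cite[Proposition 6.3]{riehl-verity}); if $\tau$ is a Reedy (acyclic) fibration then each relative matching map $m_r(\tau)$ is a retract of an $\X$- (respectively $\Z$-) Postnikov tower; and the $\widehat{\pitchfork}$ construction carries Postnikov presentations to Postnikov presentations by the dual of \cite[Lemma 5.6]{riehl-verity}. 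The two routes rest on the same ingredients from \cite{riehl-verity} (Proposition 7.7 there is itself assembled from Proposition 6.3 and Lemma 5.6), so neither is more general; your version buys economy and consistency with the proof of Theorem \ref{fibgen} at the cost of pushing all the content into the black box of the cited proposition, while the paper's version exposes the explicit canonical Postnikov tower, which is in keeping with its stated motivation of producing concrete fibrant replacements. The only bookkeeping you should make explicit is the one you already flag: that under $(\M^{\R})^{\op}\cong(\M^{\op})^{\R^{\op}}$ the two Reedy model structures correspond because relative matching maps become relative latching maps, and that \cite[Proposition 7.7]{riehl-verity} applies to classes (not just sets) of generators, as Remark \ref{rmk:cof.gen.small} requires no smallness hypotheses.
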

\begin{proof}
Any morphism $\tau \colon \Phi \to \Psi$ has a canonical presentation as a relative cell complex with cells $\ell_r(\tau) \widehat{\otimes} \kappa_r$ and also a canonical presentation as  Postnikov tower with layers  $m_r(\tau) \widehat{\pitchfork} \kappa^r$ by \cite[Proposition 6.3]{riehl-verity} and its dual. If $\tau$ is a Reedy (acyclic) fibration, then each of its relative matching maps $m_r(\tau)$ is an (acyclic) fibration. By hypothesis, we know that each $m_r(\tau)$ is a retract of an $\X$- (respectively, $\Z$-) Postnikov tower. The Postnikov presentations commute with the $\widehat{\pitchfork}$ construction by the dual of \cite[Lemma 5.6]{riehl-verity}, completing the proof.
\end{proof}

\subsection{The case of generalized Reedy categories}

Generalized Reedy categories were introduced in \cite{berger-moerdijk}. A (classical) Reedy category has no non-identity automorphisms; the idea of this generalization is to allow non-trivial degree-preserving automorphisms. 

\begin{defn} A \emph{generalized Reedy category} is a small category $\R$ together with a degree function $d: \ob \R \lra \mathbb{N}$ and two wide subcategories $\R^+$ and $\R^{-}$ satisfying the following axioms.
\begin{itemize}
\item Non-{invertible morphisms of $\R^+$ strictly} raise degree.
\item  Non-{invertible morphisms of $\R^{-}$ strictly} lower degree.
\item Isomorphisms in $\R$ preserve degree.
\item $\R^+ \bigcap \R^{-} = \Iso(\R)$. 
\item Every morphism in $\R$ factors uniquely up to isomorphism as a morphism in $\R^{-}$ followed by a morphism in $\R^+$.
\item If $\theta f = f$ for $\theta \in \Iso(\R)$ and $f \in \R^{-}$, then $\theta$ is an identity.
\end{itemize}
A \emph{dualizable} generalized Reedy category is a generalized Reedy category satisfying the additional axiom:
\begin{itemize}
\item If $f \theta = f$ for $\theta \in \Iso(\R)$ and $f \in \R^{+}$, then $\theta$ is an identity.
\end{itemize}
\end{defn}

\begin{rmk} If $\R$ is a dualizable generalized Reedy category, then $\R^\op$ is as well.  Most known examples of generalized Reedy categories are dualizable.
\end{rmk}

\begin{exs} Examples of dualizable generalized Reedy categories include all groupoids, the category of finite sets, orbit categories of finite groups, and the tree category $\Omega$, presheaves on which is the category of dendroidal sets \cite{moerdijk-weiss}.
\end{exs}

\begin{defn} Let $\R$ be a generalized Reedy category.  A model category $(\M, \Fib, \Cof, \WE)$ is  \emph{$\R$-projective}  if $\M^{\mathrm{Aut}(r)}$ admits the projective model structure for every object $r \in \R$.
\end{defn}

\begin{ex} If $(\M, \Fib, \Cof, \WE)$ is cofibrantly generated and permits the small object argument, then it is $\R$-projective for any generalized Reedy category $\R$ by Proposition \ref{prop:hirschhorn} or \cite[11.6.1]{hirschhorn}.
\end{ex}

\begin{thm}[{\cite[Theorem 1.6]{berger-moerdijk}}]\label{modelstructure} 
If $(\M, \Fib, \Cof, \WE)$ is an $\R$-projective model category, and $\R$ is a generalized Reedy category, then  $\M^{\R}$ admits a model structure where a map $\tau\colon  \Phi \to \Psi$ is a
\begin{itemize}
\item weak equivalence if and only if for every $r \in \ob\R$, $\tau_r: \Phi(r) \lra \Psi(r)$ is a weak equivalence in the projective model structure on $\M^{\mathrm{Aut}(r)}$;
\item cofibration if and only if for every $r \in \ob\R$, the relative latching map $\ell_{r}(\tau)$ is a cofibration in the projective model structure on  $\M^{\mathrm{Aut}(r)}$;
\item fibration if and only if for every $r \in \ob\R$, the relative matching map $m_{r}(\tau)$ is a fibration in the projective model structure on   $\M^{\mathrm{Aut}(r)}$.
\end{itemize}
\end{thm}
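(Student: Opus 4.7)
The plan is to adapt the classical Reedy argument to accommodate non-trivial automorphisms, proceeding by induction on degree. First I would verify the easy parts of the model category axioms: two-out-of-three and retract closure for the three classes of maps. Since weak equivalences are determined objectwise in the projective model structures on $\M^{\mathrm{Aut}(r)}$, two-out-of-three is immediate. For cofibrations and fibrations, one needs to observe that the latching and matching object constructions $L_r$ and $M_r$ are functorial on $\M^{\R}$ and preserve retracts, so that the relative latching and matching maps of a retract are retracts of the corresponding originals.

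The heart of the proof lies in the factorization and lifting axioms, both established by induction on degree using the skeletal filtration of $\R$ by the full subcategories $\R_{\leq n}$. Given $\tau\colon \Phi \to \Psi$, assume inductively that $\tau$ has been factored as $\Phi \to Z \to \Psi$ on $\R_{<n}$ in the desired manner (say, an objectwise cofibration followed by an objectwise acyclic fibration). To extend to degree $n$, pick one representative from each isomorphism class of degree-$n$ objects. For each such $r$, the latching and matching constructions yield an $\mathrm{Aut}(r)$-equivariant morphism in $\M^{\mathrm{Aut}(r)}$
\[ \Phi(r) \sqcup_{L_r\Phi} L_r Z \longrightarrow M_r Z \times_{M_r\Psi} \Psi(r), \]
which, by $\R$-projectivity, admits a factorization in the projective model structure on $\M^{\mathrm{Aut}(r)}$ as a (projective) cofibration followed by a (projective) acyclic fibration. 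This factorization specifies $Z(r)$ together with the structure maps to and from it, and then $Z$ is propagated to all degree-$n$ objects along the chosen isomorphisms to representatives. The lifting axioms are handled analogously: a lifting problem is solved degree by degree by reducing to lifting problems in each $\M^{\mathrm{Aut}(r)}$, which have solutions by hypothesis.

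The main obstacle will be the bookkeeping required to make the automorphisms coherent. One must verify that the latching and matching objects at $r$ carry natural $\mathrm{Aut}(r)$-actions making $\ell_r(\tau)$ and $m_r(\tau)$ $\mathrm{Aut}(r)$-equivariant, and that the factorizations chosen at representatives extend consistently along arbitrary morphisms of $\R$. The generalized Reedy axioms are precisely calibrated for this purpose: the condition $\R^+\cap\R^-=\Iso(\R)$ combined with the unique factorization of any morphism into an $\R^-$-piece followed by an $\R^+$-piece (up to isomorphism) lets one express arbitrary maps in a canonical form, while the cancellation axioms (that $\theta f = f$ with $\theta \in \Iso(\R),\ f\in \R^-$ implies $\theta = \id$, and symmetrically when dualizable) ensure the isomorphism ambiguity in this factorization is controlled, so that the colimits defining $L_r$ (respectively, the limits defining $M_r$) do not accidentally collapse due to automorphisms fixing a face. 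Once this equivariance bookkeeping is in place, the remaining steps reduce to straightforward iterations of the projective model structure on each $\M^{\mathrm{Aut}(r)}$.
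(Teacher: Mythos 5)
The paper does not prove this statement itself; it is quoted directly from Berger--Moerdijk \cite[Theorem 1.6]{berger-moerdijk}, so there is no in-paper argument to compare against. Your sketch follows the same strategy as the cited proof---induction over the skeletal filtration, factoring the equivariant map $\Phi(r)\sqcup_{L_r\Phi}L_rZ\to M_rZ\times_{M_r\Psi}\Psi(r)$ in the projective model structure on $\M^{\mathrm{Aut}(r)}$, with the generalized Reedy axioms controlling the equivariance bookkeeping---and is a correct outline (modulo the imprecise parenthetical describing the desired factorization as ``objectwise'': Reedy cofibrations and acyclic fibrations are characterized by relative latching and matching maps, not objectwise).
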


\begin{thm}\label{thm:berger-moerdijk} If $\R$ is a generalized Reedy category and $(\M, \Fib, \Cof, \WE)$ is cofibrantly generated  {(in the sense of Definition \ref{defn:modelcat}) }by $(\I,\J)$, then $\M^{\R}$, equipped with a model structure of Theorem\ \ref{modelstructure}, is cofibrantly generated {(in the same sense)} by $(\I\widehat{\otimes}\R, \J\widehat{\otimes}\R)$.
\end{thm}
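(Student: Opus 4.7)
The plan is to mimic the proof of Theorem \ref{thm:Reedy-cofibgen} for classical Reedy categories, accounting for the presence of nontrivial automorphism groups $\mathrm{Aut}(r)$. Since cofibrant generation is understood in the weak sense of Definition \ref{defn:modelcat}, the entire task reduces to identifying the two right lifting classes:
\[ (\I \widehat{\otimes} \R)^{\boxslash} = \{\text{generalized Reedy acyclic fibrations}\}\quad\text{and}\quad (\J\widehat{\otimes}\R)^{\boxslash}=\{\text{generalized Reedy fibrations}\}. \]
No separate factorization or smallness arguments are required at this level of generality, which is important because the hypotheses do not include any small-object machinery.

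The key technical input is the pushout-product / Yoneda adjunction: for any $i\colon A \to B$ in $\M$, any $\tau\colon \Phi \to \Psi$ in $\M^{\R}$, and any $r \in \ob\R$,
\[ (i \widehat{\otimes} \kappa_r)\boxslash \tau \iff i\boxslash m_r(\tau), \]
where $m_r(\tau)$ is regarded as a morphism in $\M$ (forgetting its residual $\mathrm{Aut}(r)$-equivariance). This rests on the Yoneda identification $\hom_{\M^{\R}}(X \otimes \R(r,-), \Phi) \cong \hom_\M(X, \Phi(r))$ together with its boundary companion $\hom_{\M^{\R}}(X \otimes \partial\R(r,-), \Phi) \cong \hom_\M(X, M_r\Phi)$, obtained by presenting $\partial\R(r,-)$ as a colimit over the slice category $r/\R^-_{<\mathrm{deg}(r)}$ via the $\R^{-}\circ \R^{+}$ factorization axiom. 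Granted these identifications, one chains the equivalences
\[ \tau \in (\I \widehat{\otimes} \R)^{\boxslash} \iff \I \boxslash m_r(\tau)\ \forall r \iff m_r(\tau) \in \Fib\cap\WE\ \forall r, \]
using cofibrant generation of $\M$; and since a projective (acyclic) fibration over the one-object category $\mathrm{Aut}(r)$ is simply an (acyclic) fibration in $\M$ on the underlying map, this is in turn equivalent to $\tau$ being a Reedy acyclic fibration by Theorem \ref{modelstructure}. The identical argument with $\J$ in place of $\I$ establishes the characterization of Reedy fibrations.

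The main obstacle is the careful verification of the identification $\hom_{\M^{\R}}(X \otimes \partial\R(r,-), \Phi) \cong \hom_\M(X, M_r\Phi)$ when $\R$ admits nontrivial automorphisms. In the classical Reedy setting this is immediate from the strictly unique $\R^{-}\circ \R^{+}$ factorization. In the generalized setting the factorization is only unique up to isomorphism, so one must verify that the colimit presentation of $\partial\R(r,-)$ indexed by $r/\R^-_{<\mathrm{deg}(r)}$ correctly collapses the corresponding $\Iso(\R)$-ambiguities at each intermediate object. This is exactly the purpose of the generalized Reedy axiom ``$\theta f = f$ with $\theta \in \Iso(\R)$ and $f \in \R^{-}$ forces $\theta = \mathrm{id}$''; once it has been invoked to pin down the relevant matching diagrams, the remainder of the argument is formal. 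Note that the dualizability axiom plays no role here, since we are interacting with $\R^{-}$ and the matching construction rather than with $\R^{+}$ and the latching construction.
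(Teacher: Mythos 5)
Your proposal is correct and follows essentially the same route as the paper: both reduce the claim to identifying $(\I\widehat{\otimes}\R)^{\boxslash}$ and $(\J\widehat{\otimes}\R)^{\boxslash}$ with the generalized Reedy acyclic fibrations and fibrations via the adjunction $(i\widehat{\otimes}\kappa_r)\boxslash\tau \iff i\boxslash m_r(\tau)$, together with the observation that a projective (acyclic) fibration in $\M^{\mathrm{Aut}(r)}$ is detected on the underlying map in $\M$. The only difference is that you spell out the verification of the boundary/matching-object adjunction in the presence of nontrivial automorphisms, whereas the paper defers those details to the reference \cite{riehl-reedy}.
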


\begin{proof} Consider a morphism $\tau$ in $\M^\R$. By adjunction $(\I \widehat{\otimes} \kappa_r) \boxslash \tau$ if and only if $\I \boxslash m_r(\tau)$, where the relative matching map for $\tau$ is defined as in Remark \ref{rmk:relative-matching} (see \cite{riehl-reedy} for more details). This lifting problem has a solution if and only if $m_r(\tau)$ is an acyclic fibration in $\M$, which is the case if and only if it is an acyclic fibration in the projective model structure on $\M^{\mathrm{Aut}(r)}$. Thus, $(\I \widehat{\otimes}\R)^\boxslash$ is the class of Reedy acyclic fibrations. The same argument works for $\J\widehat{\otimes}\R$.
\end{proof}

We now  show that the generalized Reedy model structure is fibrantly generated, provided $(\M, \Fib, \Cof, \WE)$ is a fibrantly generated model category, and $\R$ is dualizable. The model structure from Theorem \ref{modelstructure} is not self dual, so we must construct a ``dual'' version of it, using the injective model structure on an automorphism category of any object in $\R$.

\begin{defn} Let $\R$ be a generalized Reedy category.  A model category $(\M, \Fib, \Cof, \WE)$ is  \emph{$\R$-injective}  if $\M^{\mathrm{Aut}(r)}$ admits the injective model structure for every object $r \in \R$.
\end{defn}

\begin{ex} If $(\M, \Fib, \Cof, \WE)$ is a combinatorial model category, then it is  an $\R$-injective model category for any generalized Reedy category $\R$ \cite [Proposition A.2.8.2]{lurie}. 
\end{ex}

\begin{thm}\label{modstr2}
If $(\M, \Fib, \Cof, \WE)$ is an $\R$-injective model category, and $\R$ is a dualizable generalized Reedy category, then  $\M^{\R}$ can be equipped with a model structure where a map $\tau\colon  \Phi \to \Psi$ is a
\begin{itemize}
\item weak equivalence if and only if for every $r \in \ob\R$, $\tau_r: \Phi(r) \lra \Psi(r)$ is a weak equivalence in the injective model structure on $\M^{\mathrm{Aut}(r)}$;
\item cofibration if and only if for every $r \in \ob\R$, the relative latching map $\ell_{r}(\tau)$ is a cofibration in the injective model structure on $\M^{\mathrm{Aut}(r)}$;
\item fibration if and only if for every $r \in \ob\R$, the relative matching map $m_{r}(\tau)$ is a fibration in the injective model structure on  $\M^{\mathrm{Aut}(r)}$.
\end{itemize}
\end{thm}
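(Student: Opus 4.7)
The plan is to derive this theorem from Theorem~\ref{modelstructure} by applying the duality principle for model categories, exploiting the fact that $\R$ is dualizable. The first step will be to verify that $\R^{\op}$ is itself a generalized Reedy category with $(\R^{\op})^{+}=(\R^{-})^{\op}$, $(\R^{\op})^{-}=(\R^{+})^{\op}$, and the same degree function. All axioms save the two ``no fixed points'' conditions are immediate from those of $\R$; these two conditions swap upon passing to opposites, so it is precisely the dualizability axiom on $\R$ that ensures both hold for $\R^{\op}$.

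Next, I would exploit the natural isomorphism $(\M^{\R})^{\op}\cong (\M^{\op})^{\R^{\op}}$, which restricts to $(\M^{\mathrm{Aut}_{\R}(r)})^{\op}\cong (\M^{\op})^{\mathrm{Aut}_{\R^{\op}}(r)}$ at each $r$. Under dualization of model categories, a model structure corresponds to one on the opposite category with cofibrations and fibrations swapped and weak equivalences unchanged; in particular, the opposite of the injective model structure on $\M^{\mathrm{Aut}_{\R}(r)}$ (objectwise cofibrations and weak equivalences) is exactly the projective model structure on $(\M^{\op})^{\mathrm{Aut}_{\R^{\op}}(r)}$. Hence the $\R$-injectivity hypothesis on $\M$ translates to $\R^{\op}$-projectivity of $\M^{\op}$. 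Applying Theorem~\ref{modelstructure} to the pair $(\M^{\op},\R^{\op})$ then yields the described model structure on $(\M^{\op})^{\R^{\op}}$, which I would dualize back via the above isomorphism to obtain a model structure on $\M^{\R}$.

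What remains is to verify that its classes of fibrations, cofibrations, and weak equivalences match those listed in the statement of Theorem~\ref{modstr2}. The essential compatibility is that the relative latching construction in $(\M^{\op})^{\R^{\op}}$ agrees, up to taking opposites, with the relative matching construction in $\M^{\R}$: the slice category $(\R^{\op})^{+}_{<\mathrm{deg}(r)}/r$ is $\bigl(r/\R^{-}_{<\mathrm{deg}(r)}\bigr)^{\op}$, and colimits in $\M^{\op}$ are limits in $\M$. A ``projective cofibration'' condition on the relative latching map of some $\sigma$ in $(\M^{\op})^{\R^{\op}}$ therefore becomes an ``injective fibration'' condition on the corresponding relative matching map of its opposite in $\M^{\R}$; symmetrically, a ``projective fibration'' condition dualizes to an ``injective cofibration'' condition on the relative latching map. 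The main obstacle will be purely one of bookkeeping: tracking carefully how latching and matching, cofibrations and fibrations, and projective and injective model structures each swap under the duality, and confirming that each clause of the Berger--Moerdijk characterization in $(\M^{\op})^{\R^{\op}}$ matches the asserted clause in $\M^{\R}$. The dualizability of $\R$ enters only in ensuring that $\R^{\op}$ is itself a generalized Reedy category, so that Theorem~\ref{modelstructure} is applicable in the dualized setting.
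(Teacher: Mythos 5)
Your proposal is correct and follows essentially the same route as the paper: pass to $(\M^{\op},\R^{\op})$, note that $\R$-injectivity of $\M$ becomes $\R^{\op}$-projectivity of $\M^{\op}$ (with dualizability guaranteeing $\R^{\op}$ is generalized Reedy), apply Theorem~\ref{modelstructure}, and dualize back using that relative latching and matching maps are mutually dual. The paper's proof is just a terser version of the same argument.
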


\begin{proof} If $(\M, \Fib, \Cof, \WE)$ is $\R$-injective, then $(\M^\op,  \Cof^\op, \Fib^\op, \WE^\op)$ is $\R^\op$-projective.  We can therefore apply Theorem \ref{modelstructure} to $(\M^\op,  \Cof^\op, \Fib^\op, \WE^\op)$ and $\R^\op$.  Dualizing the resulting model structure on $(\M^{\op})^{\R^{\op}}\cong (\M^{\R})^\op$ gives rise to the desired model structure on $\M^{\R}$, since relative latching maps and relative matching maps are mutually dual.
\end{proof}

As was the case in the proof of Theorem \ref{fibgen}, we can prove the following theorem by dualizing Theorem \ref{thm:berger-moerdijk}.

\begin{thm}\label{thm:generalizedReedy} If $\R$ is a dualizable generalized Reedy category, and $(\M, \Fib, \Cof, \WE)$ is an $\R$-injective model category that is fibrantly generated by $(\X,\Z)$, then the model structure on $\M^{\R}$ of Theorem \ref{modstr2} is fibrantly generated by $(\X\widehat{\pitchfork}\R,\Z\widehat{\pitchfork}\R)$.
\end{thm}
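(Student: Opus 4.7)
The plan is to mimic the dualization argument used in the proof of Theorem \ref{fibgen}, but this time starting from Theorem \ref{thm:berger-moerdijk} instead of Theorem \ref{thm:Reedy-cofibgen}, and using the fact that the model structure of Theorem \ref{modstr2} was itself constructed by dualization from the model structure of Theorem \ref{modelstructure}. Since the pairing $\widehat{\pitchfork}$ is defined so that $\X \widehat{\pitchfork}\R = (\X^\op \widehat{\otimes}\R^\op)^\op$, once we identify the appropriate generating data in $(\M^\op)^{\R^\op}$, passing back through $\op$ will yield precisely the claimed fibrant generation.

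First I would record the three formal dualities that make the argument go: (i) $\R$ is a dualizable generalized Reedy category if and only if $\R^\op$ is one; (ii) $(\M, \Fib, \Cof, \WE)$ is fibrantly generated by $(\X, \Z)$ if and only if $(\M^\op, \Cof^\op, \Fib^\op, \WE^\op)$ is cofibrantly generated (in the sense of Definition \ref{defn:modelcat}) by $(\X^\op, \Z^\op)$; and (iii) $(\M, \Fib, \Cof, \WE)$ is $\R$-injective if and only if $(\M^\op, \Cof^\op, \Fib^\op, \WE^\op)$ is $\R^\op$-projective, since the injective model structure on $\M^{\mathrm{Aut}(r)}$ is the dual of the projective model structure on $(\M^\op)^{\mathrm{Aut}(r)}$ (here one uses that $\mathrm{Aut}(r)$ is a groupoid, hence self-dual).

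Next, I would apply Theorem \ref{thm:berger-moerdijk} to the data $(\M^\op, \Cof^\op, \Fib^\op, \WE^\op)$, $\R^\op$, and $(\X^\op, \Z^\op)$, to conclude that $(\M^\op)^{\R^\op}$, endowed with the generalized Reedy model structure of Theorem \ref{modelstructure}, is cofibrantly generated by $(\X^\op \widehat{\otimes}\R^\op,\ \Z^\op \widehat{\otimes}\R^\op)$. Under the isomorphism $(\M^\op)^{\R^\op} \cong (\M^\R)^\op$, this model structure corresponds, by construction (see the proof of Theorem \ref{modstr2}), to precisely the model structure on $\M^\R$ given by Theorem \ref{modstr2}; the matching of distinguished classes is because relative latching maps and relative matching maps are interchanged under passage to the opposite category.

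Finally, dualizing once more, I conclude that $\M^\R$ with the model structure of Theorem \ref{modstr2} is fibrantly generated by $\bigl((\X^\op\widehat{\otimes}\R^\op)^\op,\ (\Z^\op\widehat{\otimes}\R^\op)^\op\bigr)$, which by the definition of $\widehat{\pitchfork}$ recalled just before Theorem \ref{thm:reedy-post} is exactly $(\X\widehat{\pitchfork}\R,\ \Z\widehat{\pitchfork}\R)$. The only step that requires any care is verifying the two compatibilities: that the $\R^\op$-projectivity hypothesis on $\M^\op$ really follows from the $\R$-injectivity hypothesis on $\M$ for each individual automorphism group (which is immediate because $\mathrm{Aut}(r)^\op \cong \mathrm{Aut}(r)$), and that the generalized Reedy model structure produced by Theorem \ref{modelstructure} on $(\M^\op)^{\R^\op}$ is indeed dual to the one produced by Theorem \ref{modstr2} on $\M^\R$. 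Both are essentially bookkeeping, so I expect no serious obstacle; the argument is an entirely formal dualization analogous to the proof of Theorem \ref{fibgen}.
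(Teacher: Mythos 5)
Your proposal is correct and follows exactly the route the paper intends: the paper's own justification is the single remark that the theorem ``can be proved by dualizing Theorem \ref{thm:berger-moerdijk}'' in the same manner as Theorem \ref{fibgen}, which is precisely your argument. You have in fact supplied more detail than the paper does, correctly identifying the two compatibilities (that $\R$-injectivity of $\M$ gives $\R^\op$-projectivity of $\M^\op$ via $\mathrm{Aut}(r)^\op\cong\mathrm{Aut}(r)$, and that the model structure of Theorem \ref{modstr2} is dual to that of Theorem \ref{modelstructure} because relative latching and matching maps are interchanged under $\op$) that make the dualization go through.
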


\begin{rmk}\label{rmk:gen-reedy-post}
Theorem \ref{thm:reedy-post} can be extended to generalized Reedy categories and the model structure of Theorem \ref{modstr2} because analogous results to  \cite[Proposition 6.3]{riehl-verity} are true in this context. The proofs however are not entirely straightforward. See \cite{riehl-reedy}.
\end{rmk}

\appendix
\section{Enriched model categories}\label{app}

We recall from e.g.,  \cite{kelly} and \cite[Chapters 3, 10]{riehl}, those elements of the theory of enriched categories and the compatibility with model structure that are essential for understanding the proof of Lemma \ref{lem:C V model}.  We then describe the particular case of diagrams in an enriched category, which we need for Lemma \ref{MD V model}.

\subsection{{Enriched categories, functors, adjunctions}}

\begin{defn}\label{Vstr}
Let $(\V, \sm, \II)$ denote a closed symmetric monoidal model category. 
\begin{itemize}
\item A category $\M$ is \emph{enriched over $\V$} if there exists a bifunctor 
$$\map_{\M}\colon  \M^{\op} \ti \M \to \V\colon  (X,Y) \mapsto \map_{\M}(X,Y)$$
together with collections of morphisms in $\V$ 
$$c_{X,Y,Z}\colon  \map_{\M}(Y,Z) \sm \map_{\M}(X,Y) \to \map_{\M}(X,Z)$$
for all $X,Y,Z \in \M$, called \emph{compositions}, and
$$i_X\colon  \II \to \map_{\M}(X,X)$$
for all $X \in \M$, called \emph{units}, such that the composition is associative and unital.
\item A $\V$-enriched category $\M$ is \emph{tensored over $\V$} if there is a bifunctor 
$$- \ot - \colon  \V \ti \M \to \M \colon  (V, X) \mapsto V \ot X$$
with natural isomorphisms $\map_{\M}(V \ot X, Y) \cong \hom_{\V}\big(V,\map_{\M}(X,Y)\big)$,
for all $X,Y \in \M$ and $V \in \V$.

\item A $\V$-enriched category $\M$ is \emph{cotensored over $\V$} if there is a bifunctor 
$$( - )^{(-)} \colon   \V^{\op} \ti \M \to \M \colon  (V, X) \mapsto X^V,$$
with natural isomorphisms
$\map_{\M}(X,Y^V) \cong \hom_{\V}\big(V,\map_{\M}(X,Y)\big)$,
for all $X,Y \in \M$ and $V \in \V$. Note that if $\M$ is tensored and cotensored over $\V$, then for all $V \in \V$, the functors $- \ot V \colon  \M \to \M$ and $( - )^{V} \colon  \M \to \M$ form an adjunction.

\item If $\V$ is a monoidal model category and $\M$ is a model category that is tensored over $\V$, then $\M$ \emph{satisfies SM7} if \[j\widehat{\otimes} i\colon V \ot Y \sqcup_{V \ot X} W \ot X \to W\otimes Y\] is cofibration whenever $i\colon X\to Y$ is a cofibration in $\M$ and $j$ is a cofibration in $\V$.  If $i$ or $j$ is also a weak equivalence, then $j\widehat{\otimes} i$ must also be a weak equivalence.

\item The \emph{unit axiom} holds in $\M$ if, for any cofibrant object $X$  in $\M$, the map $\xymatrix{Q\II \ot X \ar[r]^-{q \ot \Id} & \II \ot X}$ is a weak equivalence in $\M$, where $q\colon Q(-) \to (-)$ denotes the cofibrant replacement functor and natural transformation.
\end{itemize}
\end{defn}

If a model category $\M$ satisfies all properties of Definition \ref{Vstr} with respect to a closed symmetric monoidal model category $(\V, \sm, \II)$, then $\M$ is a \emph{$\V$-model category}.

\begin{defn}\label{Vfctor}
Let $\M$ and $\M'$ be $\V$-enriched categories. A \emph{$\V$-functor} $F\colon \M \to \M'$ consists of a function 
$$F\colon  \ob \M \to \ob \M'\colon  X \mapsto FX$$
and a collection of morphisms in $\V$
$$F_{X,Y}\colon  \map_{\M}(X,Y) \to\map_{\M'}(FX,FY)$$
for all $X,Y \in \M$, called the \textit{components}, such that
$$ c \circ \big(F_{Y,Z} \sm F_{X,Y} \big) = F_{X,Z} \circ c  \mbox{ and } F_{X,X} \circ i_{X} = i_{FX}$$
for all $X, Y, Z \in \M$.
\end{defn}

\begin{defn}\label{tens cotens}
Let $(\V, \sm, \II)$ be a closed symmetric monoidal category. An adjunction $\adjunct \C\M LR$ between two $\V$-categories is a \emph{$\V$-adjunction} if it satisfies one of the following equivalent conditions. 
\begin{enumerate}
\item The left adjoint $L$ is a \emph{tensor} $\V$-\emph{functor}, i.e., there are natural isomorphisms $L(V \ot X) \cong V \ot LX$, for all $V \in\ob \V$ and $X \in \ob\C$ {that are associative and unital with respect to the monoidal structure on $\V$.}
\item The right adjoint $R$ is a \emph{cotensor} $\V$-\emph{functor}, i.e., there are natural isomorphisms $(RY)^V \cong R(Y^V)$, for all $V \in \ob\V$ and $Y \in \ob\M$, {associative and unital with respect to the monoidal structure on $\V$.}
\item The adjunction is \emph{$\V$-enriched}, i.e., there are natural isomorphisms 
$$\map_{\M}(LX,Y)\cong \map_{\C}(X, RY),$$ 
for all $X \in \ob\C$ and $Y \in \ob\M$.
\end{enumerate}
\end{defn}

\subsection{Enriched diagram categories}\label{app:diag-cat}

Let $\V$ be a complete closed symmetric mon\-oid\-al category. If $\M$ is a {complete and cocomplete, tensored and cotensored} $\V$-enriched category, and $\D$ is a small category, then the diagram category $\M^{\D}$  is naturally $\V$-enriched, as well as tensored and cotensored over $\V$.

\begin{defn}\label{MD MapSp}
Let $\Phi, \Psi: \D \to \M$ be functors. The \emph{mapping object $\map_{\M^{\D}}(\Phi, \Psi)$} is the end of the bifunctor $\map_{\M}\big(\Phi(-), \Psi(-)\big)\colon \D^{\op} \ti \D \to \V$, i.e., 
$$\map_{\M^{\D}}(\Phi,\Psi)= \int_{d \in \D} \map_{\M}(\Phi(d), \Psi(d)).$$
This end exists, since $\D$ is small and $\V$ is complete.
\end{defn}

More precisely, the object $\map_{\M^{\D}}(\Phi,\Psi)$ is constructed as an equalizer
$$\xymatrix
@!C=6cm 
{\equal \Big( \prod_{d \in \D} \map_{\M}\big(\Phi(d), \Psi(d)\big) \ar@<3pt>[r] \ar@<-3pt>[r] & \prod_{d, d' \in \D} \prod_{\alpha \in \D(d, d')} \map_{\M}\big(\Phi(d), \Psi(d')\big) \Big)
}$$ 
in $\V$. 

Tensors and cotensors for $\M^{\D}$ over $\V$ are defined objectwise, i.e., 
$$(V \otimes \Phi)(d)=V \ot \Phi (d)\quad\text{and}\quad(\Phi^{V})(d)=\Phi(d)^{V}$$ 
for all $\Phi \in \ob\M^{\D}$, $V\in \ob \V$, and $d\in \ob \D$.

 \bibliographystyle{amsplain}
\bibliography{cmc}
\end{document}